\numberwithin{equation}{section}
\theoremstyle{plain}
\newtheorem{theorem}{Theorem}[section]
\newtheorem{corollary}[theorem]{Corollary}
\newtheorem{proposition}[theorem]{Proposition}
\newtheorem{lemma}[theorem]{Lemma}
\theoremstyle{remark}
\newtheorem{remark}[theorem]{Remark}
\newtheorem{example}[theorem]{Example}
\newtheorem*{ack}{Acknowledgement}
\theoremstyle{definition}
\newcommand{\HH}{\mathcal{H}}
\newcommand{\PP}{\mathcal{P}}
\newcommand{\R}{\mathbb{R}}
\newcommand{\Q}{\mathbb{Q}}
\newcommand{\N}{\mathbb{N}}
\newcommand{\hhh}{\mathtt{h}}
\newcommand{\iii}{\mathtt{i}}
\newcommand{\jjj}{\mathtt{j}}
\newcommand{\kkk}{\mathtt{k}}
\newcommand{\eps}{\varepsilon}
\newcommand{\fii}{\varphi}
\newcommand{\roo}{\varrho}
\newcommand{\ualpha}{\overline{\alpha}}
\newcommand{\lalpha}{\underline{\alpha}}
\newcommand{\ur}{\overline{r}}
\newcommand{\lr}{\underline{r}}
\newcommand{\la}{\langle}
\newcommand{\ra}{\rangle}
\newcommand{\dd}{\,\mathrm{d}}
\newenvironment{labeledlist}[2][\unskip]
{ 
  
  \begin{enumerate} }
{ \end{enumerate} }
\DeclareMathOperator{\ldimloc}{\underline{dim}_{loc}}
\DeclareMathOperator{\dimh}{dim_H}
\DeclareMathOperator{\ldimh}{\underline{dim}_H}
\DeclareMathOperator{\dima}{dim_A}
\DeclareMathOperator{\dist}{dist}
\DeclareMathOperator{\diam}{diam}
\DeclareMathOperator{\spt}{spt}
\begin{document}

\title{Weak separation condition, Assouad dimension, and Furstenberg homogeneity}

\author{Antti K\"aenm\"aki}
\author{Eino Rossi}
\address{University of Jyvaskyla \\
         Department of Mathematics and Statistics \\
         P.O. Box 35 (MaD) \\
         FI-40014 University of Jyv\"askyl\"a \\
         Finland}
\email{antti.kaenmaki@jyu.fi}
\email{eino.rossi@jyu.fi}

\thanks{ER acknowledges the support of the Vilho, Yrj{\"o}, and Kalle V{\"a}is{\"a}l{\"a} foundation}
\subjclass[2000]{Primary 28A80; Secondary 37C45, 28D05, 28A50.}
\keywords{Moran construction, iterated function system, weak separation condition, dimension}
\date{\today}

\begin{abstract}
  We consider dimensional properties of limit sets of Moran constructions satisfying the finite clustering property. Just to name a few, such limit sets include self-conformal sets satisfying the weak separation condition and certain sub-self-affine sets. In addition to dimension results for the limit set, we manage to express the Assouad dimension of any closed subset of a self-conformal set by means of the Hausdorff dimension. As an interesting consequence of this, we show that a Furstenberg homogeneous self-similar set in the real line satisfies the weak separation condition. We also exhibit a self-similar set which satisfies the open set condition but fails to be Furstenberg homogeneous.
\end{abstract}

\maketitle

\section{Introduction}

Moran constructions and dimensional properties of their limit sets have been studied extensively in Euclidean spaces; for example, see \cite{FengWenWu1997, HollandZhang2013, HuaRaoWenWu2000, KaenmakiVilppolainen2008, LiWu2011, Moran1946}. Particular examples of such sets are self-similar, self-conformal, and self-affine sets, and their modifications. Our main goal in this article is to help the progress of the dimension theory on these sets. We have chosen to present the results in a setting as general as possible. This serves two purposes. In general metric spaces there often are no non-trivial self-similar sets but Moran constructions occur naturally. Therefore it is justifiable to seek for results in metric spaces; see \cite{BaloghRohner2007, BaloghTysonWarhurst2009, KaenmakiRajalaSuomala2012b, RajalaVilppolainen2013}. Secondly, working in a general setting often helps to uncover simpler proofs. This becomes apparent in Remark \ref{rem:wsc}(2).

In the center of our considerations is the finite clustering property; see \S \ref{sec:fcp} for the definition. Roughly speaking, it is for Moran constructions what the open set condition is for self-similar and self-conformal sets. Perhaps a bit surprisingly, the finite clustering property is also related to the weak separation condition introduced by Lau and Ngai \cite{LauNgai1999}. In Proposition \ref{thm:wsc}, we show that an iterated function system satisfying the weak separation condition introduces a Moran construction satisfying the finite clustering property.

Assuming the finite clustering property, we show in Proposition \ref{thm:pressure_dim} that the Assouad dimension and the Hausdorff dimension of the limit set of a Moran construction coincide. Furthermore, the dimension can be obtained as a zero of an appropriate pressure function. When studying limit sets of Moran constructions, measures are often pushed from the corresponding shift space. Therefore, it is not guaranteed that all the characteristics of the measure are preserved. In Proposition \ref{thm:dimloc}, we show that, assuming the finite clustering property, the local dimension of the push-forward measure can be determined directly in the shift space.

An interesting question is to try to say something about the dimension of a subset of the limit set. Of course, for a generic Moran construction this is most likely an impossible task. Nevertheless, in Proposition \ref{thm:dimf}, we show that the Assouad dimension of a closed subset of a self-conformal set is the supremum of Assouad dimensions of its microsets. A microset is a limit of symbolic magnifications of the self-conformal set; see \S \ref{sec:microset} for the definition. On a self-homothetic set, they correspond to the limits of Euclidean magnifications; see Remark \ref{rem:microset_is_microset}. In Corollary \ref{thm:cifs-micro-dimensiot}, assuming the finite clustering property, we improve this result so that the supremum can be taken from Hausdorff dimensions. The progression from Proposition \ref{thm:dimf} to Corollary \ref{thm:cifs-micro-dimensiot} is highly non-trivial and it is based on dynamical tools called CP-processes. CP-processes were introduced by Furstenberg \cite{Furstenberg2008}, greatly developed by Hochman \cite{Hochman2010}, and studied by K\"aenm\"aki, Sahlsten, and Shmerkin \cite{KaenmakiSahlstenShmerkin2015}. They have been proven to be a useful tool in the study of geometric properties of sets and measures; see, for example, \cite{FraserPollicott2015, KaenmakiSahlstenShmerkin2015b, Kempton2015}. We review the required theory on CP-processes in \S \ref{sec:distributions}.

We finish the article by studying Furstenberg homogeneous sets. Roughly speaking, a set is Furstenberg homogeneous if the limits of its Euclidean magnifications are always contained in some Euclidean magnification of the set. For a precise definition, see \S \ref{sec:micro_and_dim}. We apply Corollary \ref{thm:cifs-micro-dimensiot} and a recent result of Fraser, Henderson, Olson, and Robinson \cite{FraserHendersonOlsonRobinson2015} to show that a Furstenberg homogeneous self-similar set in the real line satisfies the weak separation condition. Although a self-homothetic set satisfying the strong separation condition is Furstenberg homogeneous, the weak separation condition does not characterize Furstenberg homogeneity. We show this by exhibiting a self-homothetic set which satisfies the open set condition but fails to be Furstenberg homogeneous.

\section{Setting and preliminaries}

In this section, we give the basic definitions for our study. In particular, we recall how the shift space can be used to model various sets arising from Moran constructions and iterated function systems.

\subsection{Shift space}
Let $\kappa \ge 2$ and $\Sigma = \{ 1,\ldots,\kappa \}^\N$ be the collection of all infinite words constructed from integers $\{ 1,\ldots,\kappa \}$. We denote the left shift operator by $\sigma$ and equip $\Sigma$ with the usual ultrametric in which the distance between two different words is $2^{-n}$, where $n$ is the first place at which the words differ. The \emph{shift space} $\Sigma$ is clearly compact. If $\iii = i_1i_2\cdots \in \Sigma$, then we define $\iii|_n = i_1 \cdots i_n$ for all $n \in \N$. The empty word $\iii|_0$ is denoted by $\varnothing$. For $\Gamma \subset \Sigma$ we set $\Gamma_n = \{ \iii|_n : \iii \in \Gamma \}$ for all $n \in \N$ and $\Gamma_* = \bigcup_{n=0}^\infty \Gamma_n$. Thus $\Sigma_*$ is the free monoid on $\Sigma_1 = \{ 1,\ldots,\kappa \}$. The concatenation of two words $\iii \in \Sigma_*$ and $\jjj \in \Sigma_* \cup \Sigma$ is denoted by $\iii\jjj$.

The length of $\iii \in \Sigma_* \cup \Sigma$ is denoted by $|\iii|$. For $\iii
\in \Sigma_*$ we set $\iii^- = \iii|_{|\iii|-1}$ and $[\iii] = \{ \iii\jjj \in
\Sigma : \jjj \in \Sigma \}$. The set $[\iii]$ is called a \emph{cylinder set}.
Cylinder sets are open and closed and they generate the Borel $\sigma$-algebra.
If $\iii,\jjj \in \Sigma_*$ such that $[\iii] \cap [\jjj] = \emptyset$, then we
write $\iii \bot \jjj$. The longest common prefix of $\iii,\jjj \in \Sigma_*
\cup \Sigma$ is denoted by $\iii \wedge \jjj$. Thus $\iii = (\iii \wedge
\jjj)\iii'$ and $\jjj = (\iii \wedge \jjj)\jjj'$ for some $\iii',\jjj' \in
\Sigma_* \cup \Sigma$.

For each set $R \subset \Sigma_*$ of forbidden words we define
\begin{equation*}
  \Sigma[R] = \Sigma \setminus \{ \iii\kkk\jjj : \iii \in \Sigma_*, \; \kkk \in R, \text{ and } \jjj \in \Sigma \} = \bigcap_{n=1}^\infty \bigcup_{\hhh \in \Sigma[R]_n} [\hhh].
\end{equation*}
The set $\Sigma[R]$ is compact and satisfies $\sigma(\Sigma[R]) \subset \Sigma[R]$ for all $R \subset \Sigma_*$. Conversely, if $\Gamma \subset \Sigma$ is a compact set satisfying $\sigma(\Gamma) \subset \Gamma$, then it is straightforward to see that there exists $R \subset \Sigma_*$ such that $\Gamma = \Sigma[R]$.

\subsection{Moran construction}
Let $X$ be a complete metric space, $\Sigma$ a shift space, and $\Gamma \subset \Sigma$ a compact set satisfying $\sigma(\Gamma) \subset \Gamma$. We assume that there is a collection $\{ E_\iii : \iii \in \Gamma_* \}$ of closed and bounded subsets of $X$ with positive diameter. Such a collection is called a \emph{Moran construction} if \ref{M1} and \ref{M2} of the following four conditions are satisfied:
\begin{labeledlist}{M}
  \item $E_\iii \subset E_{\iii^-}$ for all $\iii \in \Gamma_* \setminus \{ \varnothing \}$, \label{M1}
  \item $\diam(E_{\iii|_n}) \to 0$ as $n \to \infty$ for all $\iii \in \Gamma$, \label{M2}
  \item there exists $D \ge 1$ such that $\diam(E_{\iii\jjj}) \le D\diam(E_\iii)\diam(E_\jjj)$ for all $\iii\jjj\in \Gamma_*$, \label{M3}
  \item there exists $0<\lalpha<1$ such that $\diam(E_\iii) \ge \lalpha\diam(E_{\iii^-})$ for all $\iii \in \Gamma_*
\setminus \{ \varnothing \}$, \label{M4}
\end{labeledlist}
Observe that if $\iii,\jjj \in \Sigma_*$ are such that the concatenation $\iii\jjj \in \Gamma_*$, then $\sigma(\Gamma) \subset \Gamma$ implies that both $\iii$ and $\jjj$ are in $\Gamma_*$.

Given a Moran construction, the \emph{projection mapping} $\pi \colon \Gamma \to X$ is defined by setting
\begin{equation*}
  \{ \pi(\iii) \} = \bigcap_{n=1}^\infty E_{\iii|_n}
\end{equation*}
for all $\iii \in \Gamma$. The assumptions \ref{M1}, \ref{M2}, and the fact that the underlying metric space is complete guarantee that this is a well-defined continuous mapping. The compact set $\pi(\Gamma)$ is called the \emph{limit set} of the Moran construction and throughout the paper, we shall denote it by $E$.

A finite collection $\{ \fii_i \}_{i=1}^\kappa$ of injective Lipschitz contractions $\fii_i \colon X \to X$ is called an \emph{iterated function system} if there are two mappings $\fii_i$ and $\fii_j$ having distinct fixed points. A mapping $\fii_i$ is a \emph{Lipschitz contraction} if there is $0<\ur_i<1$ so that
\begin{equation} \label{lip_ehto}
  d(\fii_i(x),\fii_i(y)) \le \ur_i d(x,y)
\end{equation}
for all $x,y \in X$. Furthermore, it is a \emph{bi-Lipschitz contraction} if there are $0<\lr_i\le \ur_i<1$ so that
\begin{equation} \label{bilip_ehto}
  \lr_i d(x,y) \le d(\fii_i(x),\fii_i(y)) \le \ur_i d(x,y)
\end{equation}
for all $x,y \in X$. Here $d$ is the metric of $X$. If all the mappings $\fii_i$ are bi-Lipschitz contractions, then the iterated function system $\{ \fii_i \}_{i=1}^\kappa$ is called \emph{bi-Lipschitz}. We write $\fii_\iii = \fii_{i_1} \circ \cdots \circ \fii_{i_n}$ for all $\iii = i_1 \cdots i_n \in \Sigma_n$ and $n \in \N$.

Iterated function systems naturally give rise to Moran constructions. The next lemma makes this precise. The proof follows the lines of the proof of the existence of the invariant set of the iterated function system; see \cite{Hutchinson1981}. Of course, once the invariant set is shown to exist, it can be used in place of $W$ to produce a Moran construction with the same limit set.

\begin{lemma} \label{thm:ifs}
  If $\{ \fii_i \}_{i=1}^\kappa$ is an iterated function system on a complete metric space $X$, then there exists a set $W \subset X$ such that for each compact set $\Gamma \subset \Sigma$ satisfying $\sigma(\Gamma) \subset \Gamma$ the collection $\{ \fii_\iii(W) : \iii \in \Gamma_* \}$ is a Moran construction. Furthermore, if the iterated function system $\{ \fii_i \}_{i=1}^\kappa$ is bi-Lipschitz and there is a constant $C \ge 1$ so that
  \begin{equation} \label{BDP_ehto}
    d(\fii_\iii(x),\fii_\iii(y)) \le C\diam(\fii_\iii(W)) d(x,y)
  \end{equation}
  for all $x,y \in W$ and $\iii \in \Gamma_*$, then the Moran construction
  satisfies \ref{M3} and \ref{M4}.
\end{lemma}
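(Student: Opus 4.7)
The plan is to follow Hutchinson's classical construction: take $W$ as a closed ball of sufficiently large radius around an arbitrary point $x_0 \in X$. Since each $\fii_i$ is a Lipschitz contraction with ratio $\ur_i<1$, the image $\fii_i(\overline{B}(x_0,r))$ is contained in $\overline{B}(\fii_i(x_0), \ur_i r)$, so picking $r \ge \max_i d(\fii_i(x_0), x_0)/(1-\ur_i)$ guarantees that $W := \overline{B}(x_0, r)$ satisfies $\fii_i(W) \subset W$ for every $i$. This $W$ depends only on the IFS and works simultaneously for every compact $\sigma$-invariant $\Gamma \subset \Sigma$.

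For such $\Gamma$, the Moran axioms \ref{M1} and \ref{M2} follow directly. The invariance $\fii_i(W) \subset W$ and induction on word length give $\fii_\iii(W) \subset \fii_{\iii^-}(W)$ for every nonempty $\iii \in \Gamma_*$, which is \ref{M1}. For \ref{M2}, iterating \eqref{lip_ehto} yields $\diam(\fii_\iii(W)) \le \ur_{i_1}\cdots \ur_{i_n}\diam(W)$ for $\iii = i_1\cdots i_n$, and this tends to $0$ since each $\ur_i < 1$.

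Assuming in addition that the IFS is bi-Lipschitz and that \eqref{BDP_ehto} holds, \ref{M3} is immediate: writing $\fii_{\iii\jjj} = \fii_\iii \circ \fii_\jjj$ and using $\fii_\jjj(W) \subset W$, \eqref{BDP_ehto} applied to the pair $\fii_\jjj(x),\fii_\jjj(y) \in W$ gives
\begin{equation*}
  d(\fii_{\iii\jjj}(x),\fii_{\iii\jjj}(y)) \le C\diam(\fii_\iii(W))\,d(\fii_\jjj(x),\fii_\jjj(y)) \le C\diam(\fii_\iii(W))\diam(\fii_\jjj(W)),
\end{equation*}
so taking the supremum over $x,y \in W$ yields \ref{M3} with $D = C$.

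The main obstacle is \ref{M4}, where the constant $\lalpha$ must be independent of $|\iii|$. Here one cannot simply combine \eqref{BDP_ehto} with the coarse bi-Lipschitz lower bound $\prod_k \lr_{i_k}$ for $\fii_\iii$, because the distortion ratio $\prod_k \ur_{i_k}/\prod_k \lr_{i_k}$ can grow without control. Instead, the strategy is to use \eqref{BDP_ehto} as a true bounded distortion estimate: for any subset $A \subset W$ with $\diam(A)$ comparable to $\diam(W)$, the ratio $\diam(\fii_{\iii^-}(A))/\diam(\fii_{\iii^-}(W))$ should be comparable (up to a constant depending only on $C$) to $\diam(A)/\diam(W)$. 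Writing $\iii = \iii^- i$, applying this with $A = \fii_i(W) \subset W$ and using the bi-Lipschitz bound $\diam(\fii_i(W)) \ge \lr_i \diam(W)$ then produces \ref{M4} with $\lalpha$ a fixed multiple of $\min_j \lr_j$.
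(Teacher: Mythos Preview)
Your construction of $W$ and the verification of \ref{M1}, \ref{M2}, \ref{M3} follow exactly the paper's approach: the paper also takes a Hutchinson-type invariant ball (built from the fixed points $z_i$ with radius $R=\lambda(1-\ualpha)^{-1}$) and then simply declares that ``it is now straightforward to see that the required properties are satisfied''. So up through \ref{M3} you are doing the same thing, only with more detail.

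The gap is in your treatment of \ref{M4}. You correctly isolate this as the nontrivial point and then assert that \eqref{BDP_ehto} acts as a ``true bounded distortion estimate'', namely that for any $A\subset W$ with $\diam(A)$ comparable to $\diam(W)$ one has $\diam(\fii_{\iii^-}(A))/\diam(\fii_{\iii^-}(W))$ comparable to $\diam(A)/\diam(W)$. But \eqref{BDP_ehto} is only an \emph{upper} bound on the Lipschitz constant of $\fii_{\iii^-}$; it gives the inequality $\diam(\fii_{\iii^-}(A))\le C\diam(\fii_{\iii^-}(W))\diam(A)$ and says nothing in the other direction. In fact the two-sided statement you invoke is false in general: take $\fii_1(x,y)=(x/2,y/4)$ on $W=[0,1]^2$, which is bi-Lipschitz and satisfies \eqref{BDP_ehto} with $C=1$, and let $A=\{0\}\times[0,1]$. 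Then $\diam(A)/\diam(W)=1/\sqrt2$ while $\diam(\fii_1^{\,n}(A))/\diam(\fii_1^{\,n}(W))\approx 2^{-n}\to 0$. So the principle you appeal to does not hold for arbitrary subsets, and for the particular choice $A=\fii_i(W)$ your argument is circular: the lower comparability of $\diam(\fii_{\iii^-}(\fii_i(W)))$ with $\diam(\fii_{\iii^-}(W))$ is exactly \ref{M4}.

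In short, your plan for \ref{M4} names the right target but does not supply a mechanism; \eqref{BDP_ehto} alone cannot produce the needed \emph{lower} distortion control, and one has to argue differently (the paper itself leaves this step implicit).
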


\begin{proof}
  As in \cite[3.1(5)]{Hutchinson1981}, let $z_i$ be the fixed point of $\fii_i$ and define $W = \bigcap_{i \in \{ 1,\ldots,\kappa \}} B(z_i,R)$, where $R = \lambda(1-\ualpha)^{-1}$, $\ualpha = \max_{i \in \{ 1,\ldots,\kappa \}} \ur_i$, and $\lambda = \max_{i,j \in \{ 1,\ldots,\kappa \}} d(z_i,z_j)$. Here $B(x,r)$ denotes the closed ball centered at $x \in X$ with radius $r>0$. Oberve that $W \subset X$ is closed and bounded with positive diameter. Thus $\fii_\iii(W)$ is a closed and bounded set with positive diameter for all $\iii \in \Gamma_*$. It is now straightforward to see that the required properties are satisfied.
\end{proof}

\begin{example} \label{ex:moran}
  If $\{ \fii_i \}_{i=1}^\kappa$ is an iterated function system on a complete metric space $X$, then the limit set $E=\pi(\Sigma)$ of the Moran construction of Lemma \ref{thm:ifs} is the unique non-empty compact invariant set satisfying
  \begin{equation*}
    E = \bigcup_{i=1}^\kappa \fii_i(E).
  \end{equation*}
  If the mappings $\fii_i$ are homothetic, similitudes, conformal, or affine (defined on an appropriate space), then $E$ is called \emph{self-homothetic}, \emph{self-similar}, \emph{self-conformal}, or \emph{self-affine}, respectively. Furthermore, if $\Gamma \subset \Sigma$ is a compact set satisfying $\sigma(\Gamma) \subset \Gamma$, then $E=\pi(\Gamma)$ satisfies
  \begin{equation*}
    E \subset \bigcup_{i=1}^\kappa \fii_i(E)
  \end{equation*}
  and is called \emph{sub-self-homothetic}, \emph{sub-self-similar}, \emph{sub-self-conformal}, or \emph{sub-self-affine}, respectively; see \cite{Falconer1995, KaenmakiVilppolainen2010}. It is straightforward to see that all of these particular choices of mappings satisfy \eqref{BDP_ehto} and thus, the corresponding Moran construction of Lemma \ref{thm:ifs} satisfies \ref{M3} and \ref{M4}.
\end{example}

The following lemma shows that, by relying on \ref{M1}, \ref{M3}, and the compactness of $\Gamma$, the assumption \ref{M2} can be improved. Its proof follows easily from \cite[Lemma 3.1]{KaenmakiVilppolainen2008} and the proof of \cite[Proposition 4.10]{KaenmakiVilppolainen2008}. This form of \ref{M2} is used frequently in our considerations.

\begin{lemma} \label{upperalpha}
  Suppose that $X$ is a complete metric space and $\Gamma \subset \Sigma$ is a compact set satisfying $\sigma(\Gamma) \subset \Gamma$. If $\{ E_\iii : \iii \in \Gamma_* \}$ is a Moran construction satisfying \ref{M3}, then there exist constants $C \ge 1$ and $0<\ualpha<1$ so that
  \begin{equation*}
    \max_{\iii \in \Gamma_n} \diam(E_\iii) \le C\ualpha^n
  \end{equation*}
  for all $n \in \N$.
\end{lemma}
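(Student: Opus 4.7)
The plan is to upgrade the pointwise condition \ref{M2} to a uniform one via compactness of $\Gamma$, and then exploit the submultiplicative property \ref{M3} by partitioning long words into blocks of a single fixed length.

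First I would fix a threshold $\delta \in (0, 1/D)$, say $\delta = 1/(2D)$. For each $\iii \in \Gamma$, condition \ref{M2} supplies $N(\iii) \in \N$ with $\diam(E_{\iii|_{N(\iii)}}) < \delta$. The crucial observation is that $\diam(E_{\jjj|_{N(\iii)}})$ depends only on $\jjj|_{N(\iii)}$, so this bound is inherited by every $\jjj$ in the cylinder $[\iii|_{N(\iii)}] \cap \Gamma$, and by \ref{M1} also at every level $n \ge N(\iii)$ along such $\jjj$. The cylinders $[\iii|_{N(\iii)}] \cap \Gamma$ form an open cover of the compact set $\Gamma$; extracting a finite subcover and letting $N$ be the maximum of the finitely many associated indices yields a single $N \in \N$ with $\max_{\iii \in \Gamma_N} \diam(E_\iii) < \delta$.

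Next, for arbitrary $n \in \N$ with $n \ge N$, I would write $n = qN + r$ with $0 \le r < N$ and decompose each $\iii \in \Gamma_n$ as $\iii_1\iii_2\cdots\iii_q\iii'$ with $|\iii_k| = N$ and $|\iii'| = r$. The remark following the Moran axioms guarantees that every block again lies in $\Gamma_*$. Iterating \ref{M3} then gives
\[
  \diam(E_\iii) \le D^q \diam(E_{\iii_1}) \cdots \diam(E_{\iii_q}) \diam(E_{\iii'}) \le (D\delta)^q M,
\]
where $M = \max\{\diam(E_\jjj) : \jjj \in \Gamma_*,\, |\jjj| \le N\}$ is finite. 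Since $D\delta < 1/2$ and $q \ge n/N - 1$, this yields $\diam(E_\iii) \le 2M(2^{-1/N})^n$, and the finitely many cases $n < N$ are absorbed by enlarging the constant. Setting $\ualpha = 2^{-1/N}$ and choosing $C$ accordingly produces the claimed bound.

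The only real obstacle is the first step, namely converting the a priori pointwise decay in \ref{M2} into a uniform one. Once one notices that $\iii \mapsto \diam(E_{\iii|_n})$ is locally constant on $\Gamma$, compactness does the work; the rest is a routine submultiplicative-sequence estimate.
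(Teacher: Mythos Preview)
Your argument is correct and follows precisely the route the paper intends: the paper does not spell out a proof but cites \cite[Lemma~3.1 and Proposition~4.10]{KaenmakiVilppolainen2008}, whose content is exactly the two steps you carry out---first using compactness of $\Gamma$ together with \ref{M1} and \ref{M2} to obtain a single level $N$ with $\max_{\iii\in\Gamma_N}\diam(E_\iii)<1/(2D)$, and then exploiting the submultiplicativity \ref{M3} on blocks of length $N$. Your handling of the block decomposition (in particular the observation that $\sigma(\Gamma)\subset\Gamma$ forces every subword of an element of $\Gamma_*$ to lie in $\Gamma_*$) is accurate, and the final constants are assembled correctly.
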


\section{Separation conditions and dimension}

In this section, we continue the study of the finite clustering property. For a self-conformal set, the finite clustering property is equivalent to the open set condition; see e.g.\ \cite[Corollary 5.8]{KaenmakiVilppolainen2008}. An iterated function system $\{ \fii_i \}_{i=1}^\kappa$ satisfies the \emph{open set condition} if there exists a non-empty open set $U$ such that $\fii_i(U) \subset U$ for all $i$ and $\fii_i(U) \cap \fii_j(U) \ne \emptyset$ for $i \ne j$. The open set condition is a classical separation condition introduced already in \cite{Hutchinson1981}. Although the open set condition is often easier to verify, there are classical examples where it is more convenient to consider the finite clustering property directly; see \cite[Example 6.5]{KaenmakiVilppolainen2008}. Also, from a theoretical point of view, the finite clustering property could be considered to be more important since in general complete metric spaces the open set condition is not necessarily a relevant condition anymore; see \cite[Example 4.5]{RajalaVilppolainen2013}.

We will now further emphasize the importance of the finite clustering property. In Proposition \ref{thm:wsc}, we show that the invariant set of an iterated function system satisfying the weak separation is the limit of an appropriate Moran construction satifying the finite clustering property. In Propositions \ref{thm:pressure_dim}, \ref{thm:dimloc}, and \ref{thm:dimf}, we study how various dimensions on Moran constructions can be determined under the finite clustering property. Proposition \ref{thm:dima_cifs} shows how microsets and the Assouad dimension are related in the self-conformal case.

\subsection{Finite clustering property} \label{sec:fcp}
Suppose that $X$ is a complete metric space and $\Gamma \subset \Sigma$ a compact set satisfying $\sigma(\Gamma) \subset \Gamma$. For a given Moran construction $\{ E_\iii : \iii \in \Gamma_* \}$ we define
\begin{equation*}
  \Gamma(r) = \{ \iii \in \Gamma_* : \diam(E_\iii) \le r < \diam(E_{\iii^-}) \}
\end{equation*}
for all $r>0$. Observe that two distinct $\iii, \jjj \in \Gamma(r)$ satisfy $\iii \bot \jjj$ and each $E_\iii$ with $\iii \in \Gamma(r)$ is a set of diameter
roughly $r$.
We also set
\begin{equation*}
  \Gamma(x,r) = \{ \iii \in \Gamma(r) : E_\iii \cap B(x,r) \ne \emptyset \}
\end{equation*}
for all $x \in X$ and $r>0$. Let $E$ be the limit set of the Moran construction. We say that the Moran construction satisfies the \emph{finite clustering property} if
\begin{equation*}
  \sup_{x \in E} \limsup_{r \downarrow 0} \#\Gamma(x,r) < \infty
\end{equation*}
and the \emph{uniform finite clustering property} if
\begin{equation*}
  \sup_{x \in E} \sup_{r>0} \#\Gamma(x,r) < \infty.
\end{equation*}
Since an iterated function system introduces a Moran construction we use the same terminology in relation with iterated function systems and their invariant sets. It is worthwhile to note that, for a self-conformal set, the finite clustering property is equivalent to the uniform finite clustering property; see \cite[Lemma 5.2 and Theorem 3.9]{KaenmakiVilppolainen2008}. It would be interesting to know whether they are equivalent in the self-affine case.

The space $X$ is \emph{doubling} if there exists a constant $N \in \N$ so that every ball of radius $2r$ can be covered by $N$ balls of radius $r$. Considering the restriction metric, the definition extends to all subsets of $X$.

\begin{lemma} \label{thm:ufcp_doubling}
  Suppose that $X$ is a complete metric space, $\Gamma \subset \Sigma$ is a compact set satisfying $\sigma(\Gamma) \subset \Gamma$, and $E$ is the limit set of a Moran construction $\{ E_\iii : \iii \in \Gamma_* \}$ satisfying \ref{M3} and the finite clustering property. Then for each $N \ge 1$ there exists a constant $C \ge 1$ such that for every $x \in E$ there is $r_x>0$ so that
  \begin{equation*}
    \#\{ \iii \in \Gamma(r) : E_\iii \cap B(x,Nr) \ne \emptyset \} \le C
  \end{equation*}
  for all $0<r<r_x$. Furthermore, if the Moran construction satisfies the uniform finite clustering property, then $r_x$ above can be chosen to be infinity for all $x \in E$ and the limit set $E$ is doubling.
\end{lemma}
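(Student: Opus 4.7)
The plan is to apply the finite clustering property at $x$ at the enlarged scale $R=(N+1)r$, and then to control the combinatorial depth of cylinders in $\mathcal{F}(x,r,N):=\{\iii\in\Gamma(r):E_\iii\cap B(x,Nr)\ne\emptyset\}$ below their $R$-prefix by combining \ref{M3} with the exponential diameter decay from Lemma \ref{upperalpha}. Let $M:=\sup_{y\in E}\limsup_{r\downarrow 0}\#\Gamma(y,r)$, finite by hypothesis, and let $C_0\ge 1$, $0<\ualpha<1$ be the constants from Lemma \ref{upperalpha}. Given $x\in E$, I will choose $r_x^{(0)}>0$ so that $\#\Gamma(x,r)\le M$ for $r<r_x^{(0)}$, and set $r_x:=\min\{r_x^{(0)},\diam(E_\varnothing)\}/(N+1)$.

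For $r<r_x$ and $R=(N+1)r$, any $\iii\in\mathcal{F}(x,r,N)$ satisfies $E_\iii\subset B(x,R)$ because $\diam(E_\iii)\le r$. Since the map $n\mapsto\diam(E_{\iii|_n})$ is non-increasing by \ref{M1} and decreases from $\diam(E_\varnothing)>R$ to a value at most $r<R$, there is a unique prefix $\jjj:=\iii|_k\in\Gamma(R)$. As $E_\jjj\supset E_\iii$ meets $B(x,R)$ we have $\jjj\in\Gamma(x,R)$, and the finite clustering property at $x$ at scale $R$ permits at most $M$ different values of $\jjj$.

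For each such $\jjj$ and each $\iii=\jjj\kkk\in\mathcal{F}(x,r,N)$ with $|\kkk|\ge 1$, condition \ref{M3} applied to $(\jjj\kkk)^-=\jjj\kkk^-$ gives
\[
  r<\diam(E_{\jjj\kkk^-})\le D\diam(E_\jjj)\diam(E_{\kkk^-})\le DR\diam(E_{\kkk^-}),
\]
so $\diam(E_{\kkk^-})>1/(D(N+1))$. Combined with $\diam(E_{\kkk^-})\le C_0\ualpha^{|\kkk|-1}$ from Lemma \ref{upperalpha}, this forces
\[
  |\kkk|\le L:=1+\frac{\log(C_0D(N+1))}{\log(1/\ualpha)},
\]
so $\kkk$ takes at most $\sum_{n=0}^L\kappa^n$ values (and $\kkk=\varnothing$ contributes only $\jjj$ itself). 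Multiplying these bounds yields $\#\mathcal{F}(x,r,N)\le C:=M\sum_{n=0}^L\kappa^n$, which depends only on $N$ and the Moran constants.

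Under the uniform finite clustering property the bound $\#\Gamma(x,R)\le M$ is uniform in $x$ and $R$, so $r_x$ can be taken infinite (the regime $R\ge\diam(E_\varnothing)$ is handled trivially since $\#\Gamma(r)$ itself is uniformly bounded there by Lemma \ref{upperalpha} and the finite alphabet). Applying the statement with $N=2$ then yields the doubling of $E$: the at most $C_2$ cylinders in $\mathcal{F}(x,r,2)$ cover $B(x,2r)\cap E$, and each has diameter at most $r$ and hence lies in a closed $r$-ball centered at a point of $E$. The only delicate moment I anticipate is the descendant count in the third paragraph; once \ref{M3} is combined with Lemma \ref{upperalpha} to bound $|\kkk|$, everything else is routine counting.
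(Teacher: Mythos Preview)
Your proof is correct and follows essentially the same strategy as the paper: pass to a prefix at a larger scale where the finite clustering property controls the count, then use \ref{M3} together with Lemma~\ref{upperalpha} to bound the depth of descendants in $\Gamma(r)$ below that prefix. The only cosmetic differences are that the paper works at the scale $Nr$ rather than $(N+1)r$ (the containment $E_\iii\subset B(x,R)$ you arrange is not actually needed, since $E_\jjj\supset E_\iii$ already meets the ball), and the paper fixes the descendant depth $n$ in advance via Lemma~\ref{upperalpha} rather than deriving it a posteriori from the inequality $\diam(E_{\kkk^-})>1/(D(N+1))$; both routes yield the same bound $C=M\cdot(\#\Gamma_1)^n$ up to the choice of constants.
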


\begin{proof}
  Let $M \in \N$ be such that for each $x \in E$ there is $r_x>0$ so that $\#\Gamma(x,r) \le M$ for all $0<r<r_x$. Fix $x \in E$ and $r>0$. Recalling Lemma \ref{upperalpha}, choose $n \in \N$ so that $\diam(E_\jjj) \le (ND)^{-1}$ for all $\jjj \in \Gamma_n$. Let $\iii \in \Gamma(x,Nr)$. Since, by \ref{M3}, $\diam(E_{\iii\jjj}) \le D\diam(E_\iii)\diam(E_\jjj) \le r$ whenever $\jjj \in \Gamma_n$ is so that $\iii\jjj \in \Gamma_*$ we have $\#\{ \jjj \in \Gamma_* : \iii\jjj \in \Gamma(r) \} \le (\#\Gamma_1)^n$. Therefore
  \begin{equation*}
    \#\{ \hhh \in \Gamma(r) : E_\hhh \cap B(x,Nr) \ne \emptyset \} \le (\#\Gamma_1)^n M
  \end{equation*}
  and $E \cap B(x,Nr)$ can be covered by $(\#\Gamma_1)^n M$ balls of radius $r$.
\end{proof}

It is evident that in the definition of the finite clustering property, the supremum over $x \in E$ can be replaced by the supremum over $x \in X$. The following lemma shows that this is also the case with the uniform finite clustering property.

\begin{lemma} \label{thm:sup_over_x}
  Suppose that $X$ is a complete metric space, $\Gamma \subset \Sigma$ is a compact set satisfying $\sigma(\Gamma) \subset \Gamma$, and $E$ is the limit set of a Moran construction $\{ E_\iii : \iii \in \Gamma_* \}$ satisfying \ref{M3} and the uniform finite clustering property. Then
  \begin{equation*}
    \sup_{x \in X} \sup_{r>0} \#\Gamma(x,r) < \infty.
  \end{equation*}
\end{lemma}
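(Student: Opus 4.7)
The plan is to reduce the statement from arbitrary $x \in X$ to $x \in E$ by shifting the center of the ball to a nearby point of the limit set, and then invoke Lemma \ref{thm:ufcp_doubling}.

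First, fix $x \in X$ and $r>0$. If $\Gamma(x,r) = \emptyset$ there is nothing to do, so assume there is some $\iii_0 \in \Gamma(x,r)$. Because $\iii_0 \in \Gamma_*$, there exists $\jjj \in \Gamma$ with $\jjj|_{|\iii_0|} = \iii_0$, and then $y := \pi(\jjj)$ lies in $E \cap E_{\iii_0}$. This is the anchor point of $E$ that I want.

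Second, I would use the definition of $\Gamma(r)$ together with the triangle inequality to migrate from balls about $x$ to balls about $y$. Since $E_{\iii_0} \cap B(x,r) \neq \emptyset$ and $\diam(E_{\iii_0}) \le r$, one gets $d(y,x) \le 2r$. For any $\iii \in \Gamma(x,r)$, the same argument applied to $E_\iii$ gives $E_\iii \subset B(x,2r)$, and combining these yields $E_\iii \subset B(y,4r)$. In particular $E_\iii \cap B(y,4r) \neq \emptyset$, so
\begin{equation*}
  \Gamma(x,r) \subset \{ \iii \in \Gamma(r) : E_\iii \cap B(y,4r) \neq \emptyset \}.
\end{equation*}

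Third, since $y \in E$ and the Moran construction satisfies the uniform finite clustering property, the second clause of Lemma \ref{thm:ufcp_doubling} (applied with $N = 4$) gives a constant $C \ge 1$, independent of $y$ and $r$, bounding the cardinality of the set on the right by $C$. Taking the supremum over $x \in X$ and $r>0$ finishes the proof.

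There is no serious obstacle: the content is essentially the observation that the existence of even one cylinder meeting $B(x,r)$ forces a point of $E$ to lie within distance $2r$ of $x$, after which Lemma \ref{thm:ufcp_doubling} does the work. The only mild care needed is keeping track of the constants in the triangle inequality so that the enlargement factor $N = 4$ is fixed and independent of $x$ and $r$.
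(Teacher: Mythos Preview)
Your proof is correct and uses the same idea as the paper: move the center of the ball to a nearby point of $E$ and apply Lemma~\ref{thm:ufcp_doubling}. The only cosmetic difference is that the paper takes $y$ to be the nearest point of $E$ to $x$ and splits into cases according to the size of $r$ relative to $\dist(x,E)$, whereas you extract $y$ from a cylinder already in $\Gamma(x,r)$ and thereby avoid the case split (at the cost of $N=4$ instead of $N=3$).
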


\begin{proof}
  Fix $x \in X \setminus E$. Since $E$ is compact there is $y \in E$ so that $d(x,y) = \dist(x,E)$. Notice that if $r<\dist(x,E)/2$, then $E_\iii \cap B(x,r) = \emptyset$ for all $\iii \in \Gamma(r)$ and $\#\Gamma(x,r) = 0$. On the other hand, if $r \ge \dist(x,E)$, then $B(x,r) \subset B(y,3r)$ and Lemma \ref{thm:ufcp_doubling} implies that there exists a constant $C \ge 1$ not depending on $x$ nor $r$ such that
  \begin{equation*}
    \#\Gamma(x,r) \le \#\{ \iii \in \Gamma(r) : E_\iii \cap B(y,3r) \ne \emptyset \} \le C.
  \end{equation*}
  The claim follows.
\end{proof}

\begin{example} \label{ex:ifs}
  \begin{figure}[!t]
    \subfigure{\includegraphics[width=5cm]{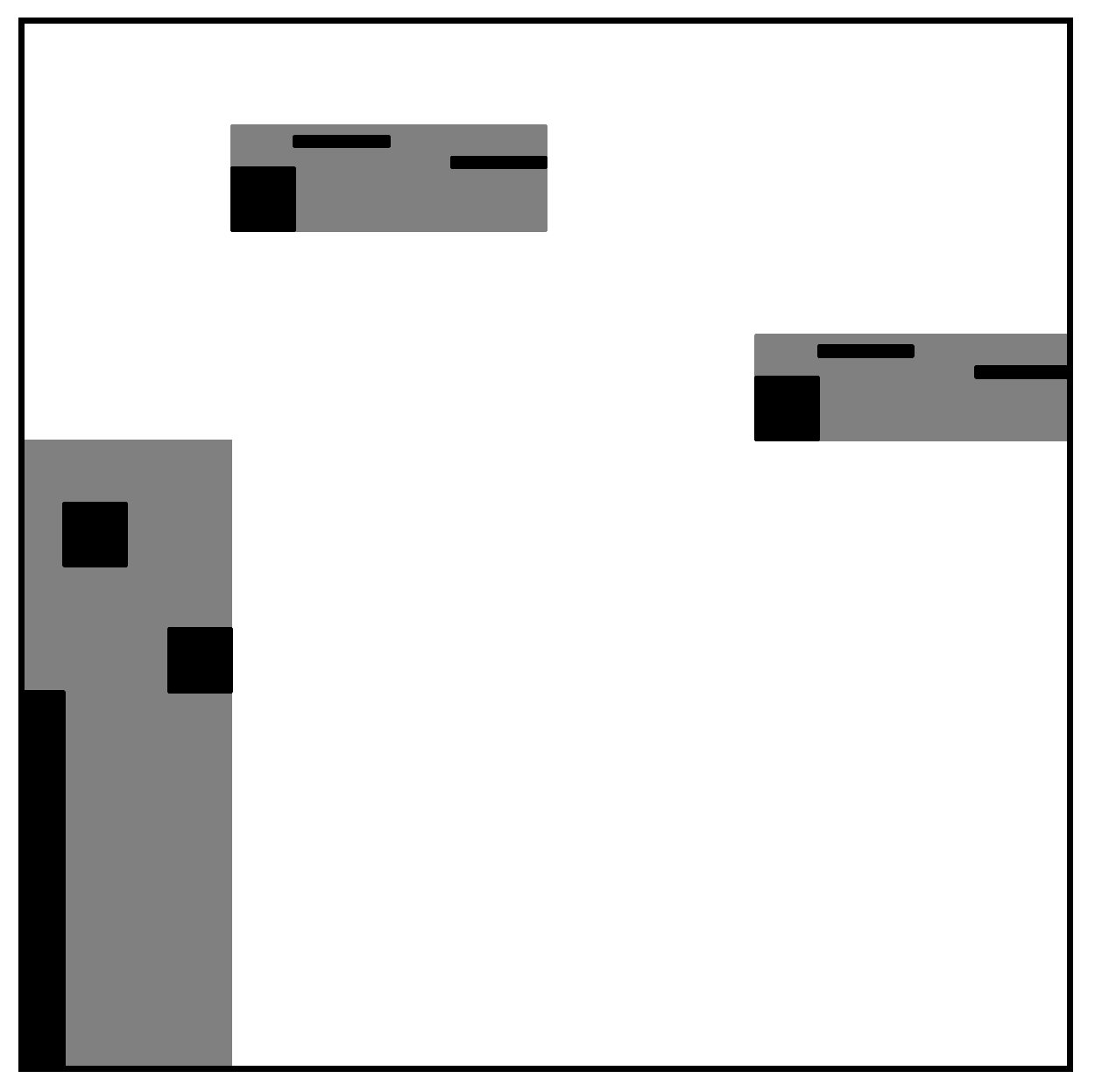}\label{fig:UFCP_yes}} \quad\quad\quad\quad
    \subfigure{\includegraphics[width=5cm]{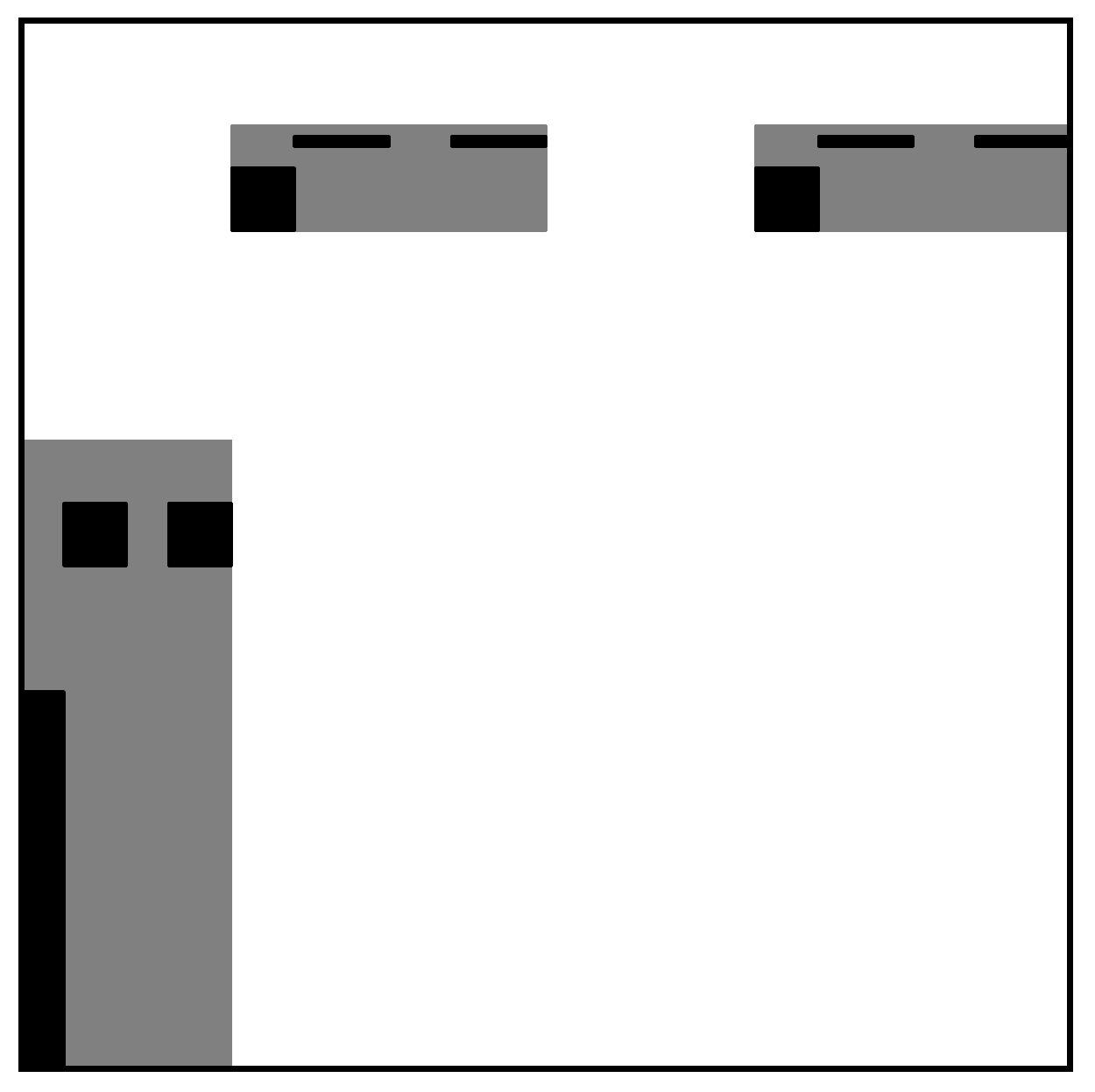}\label{fig:UFCP_no}}
    \caption{The self-affine set depicted in the left-hand side picture satisfies the uniform finite clustering property while the self-affine set of the right-hand side picture does not.} \label{fig:UFCP_yes_no}
  \end{figure}
  We exhibit a class of non-conformal iterated function systems satisfying the uniform finite clustering property. Write $Q = [0,1] \times [0,1]$ and let $\{ \fii_i \}_{i=1}^\kappa$ be an affine iterated function system acting on $Q$ so that for each $i \in \{ 1,\ldots,\kappa \}$ there are positive constants $r_i, s_i, a_i, b_i$ for which
  \begin{equation*}
    \fii_i(x,y) = (r_ix,s_iy) + (a_i,b_i).
  \end{equation*}
  It is easy to see that $\{ \fii_\iii(Q) : \iii \in \Sigma_* \}$ is a Moran construction satisfying \ref{M3} and \ref{M4}.

  We shall show that if
  \begin{align*}
    (a_i,a_i+r_i) \cap (a_j,a_j+r_j) &= \emptyset, \\
    (b_i,b_i+s_i) \cap (b_j,b_j+s_j) &= \emptyset
  \end{align*}
  whenever $i \ne j$, then the iterated function system $\{ \fii_i \}_{i=1}^\kappa$ satisfies the uniform finite clustering property; see Figure \ref{fig:UFCP_yes_no} for illustration. Since the linear parts of the mappings $\fii_i$ are diagonal we see that
  \begin{align}
    (a_\iii,a_\iii+r_\iii) \cap (a_\jjj,a_\jjj+r_\jjj) &= \emptyset, \label{eq:affine_UFCP1} \\
    (b_\iii,b_\iii+s_\iii) \cap (b_\jjj,b_\jjj+s_\jjj) &= \emptyset \label{eq:affine_UFCP2}
  \end{align}
  whenever $\iii \bot \jjj$.
  Here $r_\iii = r_{i_1} \cdots r_{i_n}$ and $a_\iii = \sum_{k=1}^{n} r_{\iii|_{k-1}}a_{i_k}$ for all $\iii=i_1\cdots i_n\in\Sigma_n$, and likewise for $s_\iii$ and $b_\iii$. Choose $M \in \N$ such that $\sqrt{2} < M\min\{ r_i,s_i : i \in \{ 1,\ldots,\kappa \} \}$. Fix $x \in Q$ and $r>0$. Let $\iii \in \Gamma(x,r)$ be such that $r_\iii \le s_\iii$. Since $s_\iii < \diam(\fii_\iii(Q)) \le \sqrt{2}s_\iii$ we have $r < \diam(\fii_{\iii^-}(Q)) \le Ms_\iii$. Observe that an interval of length $2r$ can intersect at most $2M+1$ mutually disjoint intervals of length $s_\iii$. Therefore, by \eqref{eq:affine_UFCP2}, we get
  \begin{equation*}
    \#\{ \iii \in \Gamma(x,r) : r_\iii \le s_\iii \} \le 2M+1.
  \end{equation*}
  By a symmetric argument, relying on \eqref{eq:affine_UFCP1}, we conclude that
  \begin{equation*}
    \#\Gamma(x,r) = \#\{ \iii \in \Gamma(x,r) : r_\iii \le s_\iii \} + \#\{ \iii \in \Gamma(x,r) : r_\iii > s_\iii \} \le 4M+2.
  \end{equation*}
  Hence the uniform finite clustering property is satisfied.
  
  Let us further show that if there are $i,j,k \in \{ 1,\ldots,\kappa \}$ such that $i \ne j$, $s_i=s_j$, $b_i=b_j$, and $r_k<s_k$, then the iterated function system $\{ \fii_i \}_{i=1}^\kappa$ does not satisfy the uniform finite clustering property.
  Notice that if $\iii, \jjj \in \{ i,j \}^n$, then $s_\iii = s_\jjj = s_i^n$ and $b_\iii = b_\jjj$. Since $r_k < s_k$ we find $m \in \N$ so that
  \begin{equation*}
    r_k^m < s_k^ms_i^n < \diam(\fii_k^m \circ \fii_\iii(Q))
  \end{equation*}
  for all $\iii \in \{ i,j \}^n$. Thus there is $x \in Q$ for which
  \begin{equation*}
    \fii_k^m \circ \fii_\iii(Q) \cap B(x,s_k^ms_i^n) \ne \emptyset
  \end{equation*}
  for all $\iii \in \{ i,j \}^n$ and, consequently,
  \begin{equation*}
    \# \Gamma(x,s_k^ms_i^n) \ge \#\{ i,j \}^n = 2^n
  \end{equation*}
  for all $n \ge n_0$. Lemma \ref{thm:sup_over_x} shows that the uniform finite clustering property cannot hold.
\end{example}

If a Moran construction $\{ E_\iii : \iii \in \Gamma_* \}$ satisfies \ref{M3} and \ref{M4}, then the \emph{topological pressure} $P \colon [0,\infty) \to \R$, defined by
\begin{equation*}
  P(t) = \lim_{n \to \infty} \tfrac{1}{n} \log\sum_{\iii \in \Gamma_n} \diam(E_\iii)^t,
\end{equation*}
is well-defined, convex, continuous, and strictly decreasing. The existence of the limit follows from \ref{M3}, convexity from H\"older's inequality, and for the right continuity at $0$, one needs \ref{M4}. Lemma \ref{upperalpha} guarantees that $P$ is strictly decreasing. Thus there exists unique $t \ge 0$ for which $P(t)=0$. For a more detailed reasoning, the reader is referred e.g.\ to \cite[Lemma 2.4]{RajalaVilppolainen2013} and \cite[Lemma 2.1]{KaenmakiVilppolainen2010}.

It is often the case that the dimension of the limit set is the zero of the topological pressure. The \emph{Assouad dimension} of a set $A \subset X$, denoted by $\dima(A)$, is the infimum of all $t$ satisfying the following: there exists a constant $C \ge 1$ such that each set $A \cap B(x,R)$ can be covered by at most $C(r/R)^{-t}$ balls of radius $r$ centered at $A$ for all $0<r<R$.

\begin{proposition} \label{thm:assouad_ylaraja}
  Suppose that $X$ is a complete metric space, $\Gamma \subset \Sigma$ is a compact set satisfying $\sigma(\Gamma) \subset \Gamma$, and $\{ E_\iii : \iii \in \Gamma_* \}$ is a Moran construction satisfying \ref{M3} and \ref{M4}. If the Moran construction satisfies the uniform finite clustering property, $E$ is its limit set, and $t \ge 0$ is so that $P(t) \le 0$, then $\dima(E) \le t$.
\end{proposition}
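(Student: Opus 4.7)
The plan is to show $\dima(E)\le u$ for every $u>t$; since $\dima$ is defined as an infimum, this gives $\dima(E)\le t$. Fix $u>t$ and pick any $s\in(t,u)$. By the strict monotonicity of $P$ stated earlier in the paper, $P(s)<P(t)\le 0$. Writing $a_n:=\sum_{\iii\in\Gamma_n}\diam(E_\iii)^s$, the relation \ref{M3} gives $a_{n+m}\le D^s a_n a_m$, so Fekete's lemma yields $(D^s a_n)^{1/n}\to e^{P(s)}<1$; in particular $a_n\to 0$ and $K:=\sup_n a_n$ is finite.

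Fix $x\in E$ and $0<r<R$. Applying Lemma \ref{thm:ufcp_doubling} with $N=1$ produces a universal constant $C_0$ with $\#\{\iii\in\Gamma(R):E_\iii\cap B(x,R)\ne\emptyset\}\le C_0$. For any such $\iii$, every point of $E\cap E_\iii$ lies in $E_{\iii\jjj}$ for some $\jjj\in V_\iii(r):=\{\jjj:\iii\jjj\in\Gamma(r)\}$; each such $E_{\iii\jjj}$ has diameter at most $r$ and meets $E$ (pick any infinite extension in $\Gamma$ and project), so it lies in a ball of radius $r$ centered at $E$. Hence $E\cap B(x,R)$ is covered by at most $C_0\max_\iii\#V_\iii(r)$ such balls, and it remains to estimate $\#V_\iii(r)$.

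For $\jjj\in V_\iii(r)$, iterating \ref{M4} gives $\diam(E_{\iii\jjj})\ge\lalpha^{|\jjj|}\diam(E_\iii)$, so $|\jjj|\ge\log(\diam(E_\iii)/r)/\log(1/\lalpha)$. Conversely, \ref{M3} combined with $\diam(E_{\iii\jjj})>\lalpha r$ yields $\diam(E_\jjj)\ge\lalpha r/(D\diam(E_\iii))$, and applying Lemma \ref{upperalpha} to $\jjj$ bounds $|\jjj|$ above by $\log(DC_1\diam(E_\iii)/(\lalpha r))/\log(1/\ualpha)$. Thus the admissible values of $|\jjj|$ form an interval of length $O(\log(R/r))$. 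For each admissible $n$, \ref{M3} and $a_n\le K$ give
\[
\sum_{\jjj\in V_\iii(r)\cap\Sigma_n}\diam(E_{\iii\jjj})^s\le D^s\diam(E_\iii)^s a_n\le K D^s R^s,
\]
and summing over $n$ together with the lower bound $\diam(E_{\iii\jjj})>\lalpha r$ yields $\#V_\iii(r)\le C\log(R/r)(R/r)^s$. Since $u>s$, the inequality $\log(R/r)\le c(R/r)^{u-s}$ for $R/r\ge 1$ turns this into $\#V_\iii(r)\le c'(R/r)^u$, so $E\cap B(x,R)$ is covered by at most $c''(R/r)^u$ balls of radius $r$ centered at $E$, proving $\dima(E)\le u$.

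The main technical subtlety is the logarithmic factor, which reflects the possible gap between the contraction constants $\lalpha$ of \ref{M4} and $\ualpha$ of Lemma \ref{upperalpha}; it cannot be removed in this generality without a two-sided analogue of \ref{M3}, but is absorbed by the slack $u-s>0$ between the Assouad exponent we bound and the pressure exponent we use.
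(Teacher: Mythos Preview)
Your argument is correct, with one wording caveat: the claim ``every point of $E\cap E_\iii$ lies in $E_{\iii\jjj}$ for some $\jjj\in V_\iii(r)$'' is not literally true when the construction sets overlap, since a point $\pi(\hhh)\in E_\iii$ with $\hhh\notin[\iii]$ need not lie in any $E_{\iii\jjj}$. What you actually need, and what holds, is the covering $E\cap B(x,R)\subset\bigcup_{\iii\in\Gamma(x,R)}\pi([\iii])\subset\bigcup_{\iii\in\Gamma(x,R)}\bigcup_{\jjj\in V_\iii(r)}E_{\iii\jjj}$, obtained by following the coding of each point rather than its location.

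Your route differs from the paper's. You bound $a_n=\sum_{\iii\in\Gamma_n}\diam(E_\iii)^s$ uniformly in $n$, then count the admissible word-lengths $|\jjj|$ via \ref{M4} and Lemma~\ref{upperalpha}, picking up a $\log(R/r)$ factor that you absorb by working at an exponent $u>s>t$. The paper instead fixes a single large $N$ with $\sum_{\iii\in\Gamma_N}\diam(E_\iii)^t<D^{-t}$, sets $\Gamma_N(\rho)=\{\iii\in\bigcup_k\Gamma_{kN}:\diam(E_\iii)<\rho\le\diam(E_{\iii|_{|\iii|-N}})\}$, and uses \ref{M3} telescopically to obtain $\sum_{\iii\in\Gamma_N(\rho)}\diam(E_\iii)^t<1$, whence $\#\Gamma_N(\rho)\le\lalpha^{-N}\rho^{-t}$ directly; covering $B(x,R)$ by $\Gamma(x,R)$ and then each cylinder by $\Gamma_N(r/(DR))$ gives the Assouad bound at exponent $t$ with no logarithmic loss. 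The paper's device of stepping only in multiples of $N$ is cleaner and avoids any need to control the range of $|\jjj|$; your approach is a bit more hands-on but makes explicit how the gap between $\lalpha$ and $\ualpha$ is responsible for the log factor, and why it disappears once one allows any slack in the exponent.
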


\begin{proof}
  We may assume that $P(t)<0$. Let $\delta>0$ be such that $P(t)<-\delta<0$ and choose $N \ge \delta^{-1}\log D^t$, where $D \ge 1$ is as in \ref{M3}, so that
  \begin{equation*}
    \sum_{\iii \in \Gamma_N} \diam(E_\iii)^t < e^{-N\delta} \le D^{-t}.
  \end{equation*}
  Observe that, by \ref{M3}, for each given $\iii \in \Gamma_{kN}$ we have
  \begin{equation*}
    \sum_{\iii\jjj \in \Gamma_{(k+1)N}} \diam(E_{\iii\jjj})^t \le D^t \diam(E_\iii)^t \sum_{\jjj \in \Gamma_N} \diam(E_\jjj)^t < \diam(E_\iii)^t
  \end{equation*}
  for all $k \in \N$. Therefore, if
  \begin{equation*}
    \Gamma_N(\roo) = \{ \iii \in \bigcup_{k=1}^\infty \Gamma_{kN} : \diam(E_\iii) < \roo \le \diam(E_{\iii|_{|\iii|-N}}) \},
  \end{equation*}
  then
  \begin{equation*}
    \sum_{\iii \in \Gamma_N(\roo)} \diam(E_\iii)^t < D^{-t} \le 1
  \end{equation*}
  for all $\roo > 0$. Since, by \ref{M4},
  \begin{equation*}
    1 > \sum_{\iii \in \Gamma_N(\roo)} \diam(E_\iii)^t \ge \lalpha^N \sum_{\iii \in \Gamma_N(\roo)} \diam(E_{\iii|_{|\iii|-N}})^t \ge \lalpha^N \sum_{\iii \in \Gamma_N(\roo)} \roo^t = \roo^t\lalpha^N \#\Gamma_N(\roo)
  \end{equation*}
  we have
  \begin{equation} \label{eq:assouad_ylaraja}
    \#\Gamma_N(\roo) \le \lalpha^{-N} \roo^{-t}
  \end{equation}
  for all $\roo>0$.
  
  According to the uniform finite clustering property, there is $M>0$ such that $\#\Gamma(x,R) \le M$ for all $x \in E$ and $R>0$. Fix $x \in E$ and $0<r<R$. Observe that the set $E \cap B(x,R)$ can be covered by the collection $\{ E_\hhh \}_{\hhh \in \Upsilon}$, where
  \begin{equation*}
    \Upsilon = \{ \iii\jjj \in \Gamma_* : \iii \in \Gamma(x,R) \text{ and } \jjj \in \Gamma_N(\tfrac{r}{DR}) \}.
  \end{equation*}
  Recalling \ref{M3}, we see that $\diam(E_{\iii\jjj}) \le D\diam(E_\iii)\diam(E_\jjj) \le r$ for all $\iii\jjj \in \Upsilon$. Thus the set $E \cap B(x,R)$ can be covered by $\#\Upsilon$ many balls of radius $r$ centered at $E$. Since, by \eqref{eq:assouad_ylaraja},
  \begin{equation*}
    \#\Upsilon \le \#\Gamma(x,R) \#\Gamma_N(\tfrac{r}{DR}) \le M\lalpha^{-N} \Bigl( \frac{r}{DR} \Bigr)^{-t}
  \end{equation*}
  we have finished the proof.
\end{proof}

Without the uniform finite clustering property, a simple modification of the previous proof yields an upper bound for the upper Minkowski dimension. This result is stated in \cite[Proposition 2.6]{RajalaVilppolainen2013} but it should be noted that the proof presented there is not correct. Recalling \cite[\S 3.1]{Fraser2014}, we see that, without the uniform finite clustering property, the result can fail for the Assouad dimension already in the self-similar case.

By combining Proposition \ref{thm:assouad_ylaraja} and \cite[Proposition 3.1]{RajalaVilppolainen2013}, we can now extend the dimension result for Moran constructions in metric spaces to cover also the Assouad dimension. We emphasize that this is possible due to the uniform finite clustering property assumption.
We also remark that \cite[Proposition 3.1]{RajalaVilppolainen2013} is stated in the case $\Gamma=\Sigma$, but it is easy to see that it is valid also in the general case. The $t$-dimensional Hausdorff measure is denoted by $\HH^t$ and the Hausdorff dimension of a set $A$ by $\dimh(A)$. Recall that $\dimh(A) \le \dima(A)$ for all sets $A$.

\begin{proposition} \label{thm:pressure_dim}
  Suppose that $X$ is a complete metric space, $\Gamma \subset \Sigma$ is a compact set satisfying $\sigma(\Gamma) \subset \Gamma$, and $\{ E_\iii : \iii \in \Gamma_* \}$ is a Moran construction satisfying \ref{M3} and \ref{M4}. If the Moran construction satisfies the uniform finite clustering property, $E$ is its limit set, and $t \ge 0$ is so that $P(t) = 0$, then $t = \dimh(E) = \dima(E)$ and $\HH^t(E)>0$.
\end{proposition}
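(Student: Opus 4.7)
The plan is to sandwich $t$ between $\dimh(E)$ and $\dima(E)$, using the general inequality $\dimh(E) \le \dima(E)$ together with a matching upper bound on $\dima(E)$ and a matching lower bound on $\dimh(E)$ (the latter coming with positivity of $\HH^t$).

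For the upper bound, I would invoke Proposition \ref{thm:assouad_ylaraja} directly. Since $P(t)=0$, in particular $P(t)\le 0$, and all the hypotheses of that proposition (namely \ref{M3}, \ref{M4}, and the uniform finite clustering property) are in force. This gives $\dima(E)\le t$ with no further work.

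For the lower bound, I would cite \cite[Proposition 3.1]{RajalaVilppolainen2013}, which under \ref{M3} and \ref{M4} and $P(t)=0$ yields a mass distribution on $E$. The underlying argument is the standard one: using convexity of the pressure and $P(t)=0$, one produces a sub-multiplicative Bernoulli-type weight $\mu([\iii])\asymp \diam(E_\iii)^t$ along cylinders, pushes it to $E$ via $\pi$, and verifies from \ref{M3} and \ref{M4} that the resulting measure satisfies $\mu(B(x,r))\lesssim r^t$ on $E$. The mass distribution principle then gives $\HH^t(E)>0$ and hence $\dimh(E)\ge t$. As noted in the excerpt, this proposition is stated for $\Gamma=\Sigma$, but the same proof works for any compact shift-invariant $\Gamma$, so this application is legitimate.

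Combining the two bounds with the trivial $\dimh(E)\le\dima(E)$ gives the chain $t\le \dimh(E)\le \dima(E)\le t$, so all three quantities coincide, and $\HH^t(E)>0$ is preserved from the lower-bound step. The main potential obstacle is conceptual rather than technical: making sure the cited mass distribution argument of \cite{RajalaVilppolainen2013} really goes through on a general compact $\sigma$-invariant $\Gamma\subset\Sigma$ rather than only on the full shift. This is checked by observing that the proof uses only (a) the existence of a zero of the pressure built from $\{\diam(E_\iii):\iii\in\Gamma_*\}$, and (b) the submultiplicativity and quasi-multiplicativity provided by \ref{M3} and \ref{M4}, none of which require $\Gamma=\Sigma$.
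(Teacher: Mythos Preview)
Your proposal is correct and matches the paper exactly: the proposition is presented (without a separate proof environment) as the combination of Proposition~\ref{thm:assouad_ylaraja} (giving $\dima(E)\le t$) with \cite[Proposition~3.1]{RajalaVilppolainen2013} (giving $\HH^t(E)>0$ and hence $\dimh(E)\ge t$), together with the same remark that the latter extends verbatim from $\Gamma=\Sigma$ to general compact $\sigma$-invariant $\Gamma$. One small imprecision in your informal sketch of the cited argument: the bound $\pi\mu(B(x,r))\lesssim r^t$ relies on the finite clustering property to control $\#\Gamma(x,r)$, not only on \ref{M3} and \ref{M4}.
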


\begin{remark} \label{rem:pressure}
  (1) A limit set $E$ of a Moran construction satisfying the assumptions of Proposition \ref{thm:pressure_dim} can have $\HH^t(E)=\infty$ for $t \ge 0$ with $P(t)=0$; see \cite[Example 6.4]{KaenmakiVilppolainen2010} and \cite[Example 2.3]{RajalaVilppolainen2013}.
  
  (2) For a sub-self-conformal set $E$ we have $\PP^t(E)<\infty$ for $t = \dimh(E)$ even without assuming the finite clustering property. Here $\PP^t$ is the $t$-dimensional packing measure. This follows by applying the argument used in \cite[Theorem 4.3]{KaenmakiVilppolainen2008} and \cite[Theorem 3.2]{LauNgaiWang2009}.
\end{remark}

\subsection{Weak separation condition}
Suppose that $\{ \fii_i \}_{i=1}^\kappa$ be an iterated function system on a complete metric space $X$. Let $\{ \fii_\iii(W) : \iii \in \Sigma_* \}$ be a Moran construction as in Lemma \ref{thm:ifs} and $E$ its limit set. We define
\begin{equation*}
  \Phi(r) = \{ \fii_\iii : \iii \in \Sigma(r) \}
\end{equation*}
for all $r>0$ and
\begin{equation*}
  \Phi(x,r) = \{ \fii \in \Phi(r) : \fii(W) \cap B(x,r) \ne \emptyset \}
\end{equation*}
for all $x \in X$ and $r>0$. We say that the iterated function system $\{ \fii_i \}_{i=1}^\kappa$ satisfies the \emph{weak separation condition} if
\begin{equation*}
  \sup_{x \in E} \limsup_{r \downarrow 0} \# \Phi(x,r) < \infty
\end{equation*}
and the \emph{uniform weak separation condition} if
\begin{equation*}
  \sup_{x \in E} \sup_{r>0} \# \Phi(x,r) < \infty.
\end{equation*}
Note that $\#\Phi(x,r) \le \#\Sigma(x,r)$ for all $x \in X$ and $r>0$. Furthermore, if $\fii_\iii \ne \fii_\jjj$ for all $\iii,\jjj \in \Sigma_*$ with $\iii \ne \jjj$, then $\#\Phi(x,r) = \#\Sigma(x,r)$ for all $x \in X$ and $r>0$.

\begin{remark} \label{rem:wsc}
  (1) The weak separation condition originates in \cite{LauNgai1999, Zerner1996}. It was first defined for self-similar sets in Euclidean spaces. The definition for self-conformal sets was introduced in \cite{LauNgaiWang2009}. There it was assumed that there exists a set $D \subset \R^d$ with non-empty interior so that
  \begin{equation*}
    \sup_{x \in X} \sup_{r>0} \# \{ \fii \in \Phi(r) : x \in \fii(D) \} < \infty.
  \end{equation*}
  Choosing the set $D$ to be the set $W$ of Lemma \ref{thm:ifs}, it follows that this condition is equivalent to the uniform weak separation condition; recall Lemma \ref{thm:sup_over_x} and inspect the proof of the implication (a) $\Rightarrow$ (b) in \cite[Proposition 3.1]{LauNgaiWang2009}.

  (2) It follows immediately from the definitions that an iterated function system $\{ \fii_i \}_{i=1}^\kappa$ satisfies the finite clustering property if and only if it satisfies the weak separation condition and $\fii_\iii \ne \fii_\jjj$ for all $\iii,\jjj \in \Sigma_*$ with $\iii \ne \jjj$. Recalling that for a self-conformal set, the finite clustering property is equivalent to the open set condition, we have obtained an alternative proof for \cite[Theorem 1.3]{DengNgai2011}.
\end{remark}

The following proposition shows that the weak separation condition is always related to the finite clustering property.

\begin{proposition} \label{thm:wsc}
  Suppose that $X$ is a complete metric space, $\{ \fii_i \}_{i=1}^\kappa$ is an iterated function system satisfying the (uniform) weak separation condition, $W \subset X$ is as in Lemma \ref{thm:ifs}, and $E$ is the limit set of the Moran construction $\{ \fii_\iii(W) : \iii \in \Sigma_* \}$. Then there exists a compact set $\Gamma \subset \Sigma$ satisfying $\sigma(\Gamma) \subset \Gamma$ such that
  \begin{equation*}
    \#\Gamma(x,r) = \#\Phi(x,r)
  \end{equation*}
  for all $x \in E$ and $r>0$. In particular, the Moran construction $\{ \fii_\iii(W) : \iii \in \Gamma_* \}$ satisfies the (uniform) finite clustering property and has $E$ as its limit set.
\end{proposition}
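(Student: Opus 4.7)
The plan is to construct $\Gamma$ by keeping, within each equivalence class $\{\jjj \in \Sigma_* : \fii_\jjj = \fii_\iii\}$, a single distinguished representative via a shortlex-type order, and then assembling these into a subshift. Fix a total order on $\Sigma_1 = \{1,\ldots,\kappa\}$ and extend it to the \emph{shortlex} order $\prec$ on $\Sigma_*$: $\iii \prec \jjj$ iff $|\iii| < |\jjj|$, or $|\iii| = |\jjj|$ and $\iii$ is lexicographically smaller than $\jjj$. Declare $\iii \in \Sigma_*$ to be \emph{canonical} if $\iii \preceq \jjj$ whenever $\fii_\jjj = \fii_\iii$, and let $R$ denote the set of non-canonical words. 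Setting $\Gamma = \Sigma[R]$, the construction recalled in \S 2.1 gives at once that $\Gamma$ is compact and satisfies $\sigma(\Gamma) \subset \Gamma$.

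The key point is that the canonical set $F = \Sigma_* \setminus R$ is closed under taking factors: if $\iii = uvw$ is canonical but $v$ were not, then any $v' \prec v$ with $\fii_{v'} = \fii_v$ would yield $uv'w$ with $\fii_{uv'w} = \fii_u \fii_{v'} \fii_w = \fii_\iii$ and $uv'w \prec uvw = \iii$, since shortlex is preserved under both left and right concatenation. This contradicts canonicity of $\iii$. It follows that $\Gamma_* \subset F$, so the map $\iii \mapsto \fii_\iii$ is injective on every $\Gamma(r)$, which immediately gives $\#\Gamma(x,r) \le \#\Phi(x,r)$ for all $x \in E$ and $r > 0$. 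For the reverse inequality, given $\fii \in \Phi(x,r)$ and any representative $\iii_0 \in \Sigma(r)$ with $\fii_{\iii_0} = \fii$, I would iteratively replace non-canonical factors of $\iii_0$ by their $\prec$-minimum equivalents to reach a canonical $\iii^\ast \in F$ with $\fii_{\iii^\ast} = \fii$; termination is guaranteed by well-foundedness of $\prec$.

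The main obstacle, and the step I expect to require the most care, is verifying that $\iii^\ast$ actually sits at the correct Moran scale, namely that $\iii^\ast \in \Gamma(r) = \Gamma_* \cap \Sigma(r)$, which amounts to $\diam(\fii_{(\iii^\ast)^-}(W)) > r$. Since the canonical reduction may shorten $\iii_0$, this is not automatic: the shortlex-minimum representative of a map can be strictly shorter than the Moran-scale representative. To handle this, I would combine the bi-Lipschitz bounds \ref{M3} and \ref{M4} with the uniform bound on $\#\Phi(x, r)$ afforded by the weak separation condition to control how much shorter $\iii^\ast$ can be, ensuring the prefix diameter remains above $r$; if necessary one refines $\prec$ or restricts the canonicalization so that the Moran scale is preserved by design. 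Once the equality $\#\Gamma(x, r) = \#\Phi(x, r)$ is established, the (uniform) finite clustering property of $\{\fii_\iii(W) : \iii \in \Gamma_*\}$ follows at once from the (uniform) weak separation condition, and $\pi(\Gamma) = E$ because every map in each $\Phi(r)$ is realized by some $\iii \in \Gamma(r)$.
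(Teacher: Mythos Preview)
Your construction is essentially the paper's: the paper uses the pure lexicographic order rather than shortlex, which also yields factor-closure (if $\fii_\hhh=\fii_\kkk$ with $\hhh\ne\kkk$ then neither word can be a prefix of the other, since otherwise some composition $\fii_\jjj$ would be the identity, impossible for strict contractions; hence the lex comparison is decided at a genuine first differing letter and is preserved under left and right concatenation). Either order produces a shift-invariant $\Gamma$ with (1) $\{\fii_\jjj:\jjj\in\Gamma_*\}=\{\fii_\iii:\iii\in\Sigma_*\}$ and (2) $\jjj\mapsto\fii_\jjj$ injective on $\Gamma_*$; property (2) alone gives $\#\Gamma(x,r)\le\#\Phi(x,r)$, which already yields the (uniform) finite clustering property and $\pi(\Gamma)=E$.

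The gap you flag is real, and your proposed repair cannot succeed, because the displayed equality can simply fail. With the paper's lex order, take $\fii_1(x)=x/4$, $\fii_2(x)=x/2+\tfrac12$, $\fii_3=\fii_1\circ\fii_2$ on $W=[0,1]$. This system satisfies the uniform weak separation condition; one has $12\prec 3$, so $3\in R$ and $\Gamma_*=\{1,2\}^*$. At $r=0.3$ and $x=\tfrac18\in E$ one computes $\Phi(x,r)=\{\fii_1,\fii_3\}$, while $\Gamma(r)=\{1,21,22\}$ and $\Gamma(x,r)=\{1\}$: the canonical representative $12$ of $\fii_3$ misses $\Gamma(r)$ because $\diam(\fii_1(W))=\tfrac14\le r$. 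A suitably relabelled example defeats shortlex in the same way. The paper does not address this point; it asserts that (1) gives $\Phi(x,r)=\{\fii_\iii:\iii\in\Gamma(r),\ \fii_\iii(W)\cap B(x,r)\ne\emptyset\}$, which is precisely the step you questioned. Note finally that \ref{M4} is not among the hypotheses here (the iterated function system is only assumed Lipschitz), so the bi-Lipschitz bounds you propose to invoke are not available in any case; but as the example shows, even with similarities the equality need not hold, so no estimate of that kind will rescue it.
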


\begin{proof}
  We will define a compact set $\Gamma \subset \Sigma$ satisfying $\sigma(\Gamma) \subset \Gamma$ so that
  \begin{enumerate}
    \item for each $\iii \in \Sigma_*$ there is $\jjj \in \Gamma_*$ such that
$\fii_\iii = \fii_\jjj$,
    \item $\fii_\iii \ne \fii_\jjj$ for all $\iii, \jjj \in \Gamma_*$ with $\iii
\ne \jjj$.
  \end{enumerate}
  Let $\prec$ be the lexicographical order on $\Sigma_*$. This means that $\iii \prec \jjj$ if $\iii = \iii \wedge \jjj$ or $\iii = (\iii \wedge \jjj) i \iii'$ and $\jjj = (\iii \wedge \jjj) j \jjj'$, where $i,j \in \Sigma_1$ so that $i<j$ and $\iii',\jjj' \in \Sigma_*$. Using this order we define a set
  \begin{equation*}
    R = \{ \kkk \in \Sigma_* : \fii_\kkk = \fii_\hhh \text{ for some } \hhh \in \Sigma_* \setminus \{ \kkk \} \text{ with } \hhh \prec \kkk \}.
  \end{equation*}
  It follows immediately that $\fii_\iii \ne \fii_\jjj$ for all $\iii, \jjj \in \Sigma_* \setminus R$ with $\iii \ne \jjj$. Observe that if $\kkk \in R$ and $\iii, \jjj \in \Sigma_*$, then $\iii\kkk\jjj \in R$. Indeed, if $\hhh \in \Sigma_*$ is such that $\hhh \prec \kkk$ and $\fii_\kkk = \fii_\hhh$, then $\iii\hhh\jjj \prec \iii\kkk\jjj$ and $\fii_{\iii\hhh\jjj} = \fii_{\iii\kkk\jjj}$. Thus, defining $\Gamma = \Sigma[R]$, we have found a compact set $\Gamma$ for which $\sigma(\Gamma) \subset \Gamma$. In fact, we have proven both (1) and (2) since the above reasoning shows that $\Gamma_* = \Sigma[R]_* = \Sigma_* \setminus R$.

  The (uniform) finite clustering property follows now immediately since $\Phi(x,r) = \{ \fii_\iii : \iii \in \Gamma(r)$ and $\fii_\iii(W) \cap B(x,r) \ne \emptyset \}$ by (1), and (2) implies $\#\{ \fii_\iii : \iii \in \Gamma(r)$ and $\fii_\iii(W) \cap B(x,r) \ne \emptyset \} = \#\Gamma(x,r)$ for all $x \in X$ and $r>0$. The claim on the limit set follows from (1).
\end{proof}

\begin{remark}
  Suppose that $\{ \fii_i \}_{i=1}^\kappa$ is a bi-Lipschitz iterated function system satifying \eqref{BDP_ehto} and $E$ its invariant set. If $\{ \fii_i \}_{i=1}^\kappa$ satisfies the uniform weak separation condition, then Lemma \ref{thm:ifs} and Propositions \ref{thm:pressure_dim} and \ref{thm:wsc} imply that $t = \dimh(E) = \dima(E)$ and $\HH^t(E)>0$, where $t$ is the zero of the corresponding pressure. Furthermore, if $\{ \fii_i \}_{i=1}^\kappa$ is conformal and satisfies the weak separation condition, then Remark \ref{rem:pressure}(2) gives $\PP^t(E) < \infty$.
\end{remark}

\subsection{Lower local dimension}
Suppose that $X$ is a locally compact metric space. If $\mu$ is a locally finite Borel regular measure on $X$ and $x \in X$, then the \emph{lower local dimension} of $\mu$ at $x$ is defined by
\begin{equation*}
  \ldimloc(\mu,x) = \liminf_{r\downarrow 0}\frac{\log\mu(B(x,r))}{\log r}.
\end{equation*}
The \emph{lower Hausdorff dimension} of $\mu$ is defined by
\begin{equation*}
  \ldimh(\mu) = \sup\{ t \ge 0 : \ldimloc(\mu,x) \ge t \text{ for $\mu$-almost all $x$} \}.
\end{equation*}
Recall that this quantity can be recovered from the set-theoretical Hausdorff dimension as follows:
\begin{equation*}
  \ldimh(\mu) = \inf\{ \dimh(A) : A \subset X \text{ is a Borel with } \mu(A)>0 \}.
\end{equation*}
The reader is referred to the books of Mattila \cite{Mattila1995} and Falconer \cite{Falconer1997} for further background on measures and dimensions.

The next proposition shows that under the uniform finite clustering property, the lower local dimension of the projected measure can be obtained symbolically. It generalizes \cite[Proposition 3.1]{KaenmakiRajalaSuomala2012b}. If $f$ is a function and $\mu$ is a measure, then the push-forward measure is denoted by $f\mu$.

\begin{proposition} \label{thm:dimloc}
  Suppose that $X$ is a complete metric space, $\Gamma \subset \Sigma$ is a compact set satisfying $\sigma(\Gamma) \subset \Gamma$, and $\mu$ is a Borel regular probability measure on $\Gamma$. If $\{ E_\iii : \iii \in \Gamma \}$ is a Moran construction satisfying \ref{M3} and the uniform finite clustering property, then
  \begin{equation*}
    \ldimloc(\pi\mu, \pi(\iii)) = \liminf_{n \to \infty} \frac{\log\mu([\iii|_n])}{\log\diam(E_{\iii|_n})}
  \end{equation*}
  for $\mu$-almost all $\iii \in \Gamma$.
\end{proposition}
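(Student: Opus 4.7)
The plan is to prove $\ldimloc(\pi\mu, \pi(\iii)) \le s(\iii)$ pointwise for every $\iii$ and the reverse inequality for $\mu$-almost every $\iii$, where $s(\iii)$ denotes the symbolic quantity on the right-hand side. For each $\iii$ and small $r > 0$, let $n(r)$ be the unique integer with $\iii|_{n(r)} \in \Gamma(r)$; it exists and tends to infinity by \ref{M3} and Lemma \ref{upperalpha}.

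For the upper bound, $\pi(\iii) \in E_{\iii|_{n(r)}}$ together with $\diam(E_{\iii|_{n(r)}}) \le r$ gives $E_{\iii|_{n(r)}} \subset B(\pi(\iii), r)$, so $\mu([\iii|_{n(r)}]) \le \pi\mu(B(\pi(\iii), r))$. Condition \ref{M4} ensures that $\log r$ and $\log\diam(E_{\iii|_{n(r)}})$ differ by at most $|\log\lalpha|$ while both tend to $-\infty$, and a routine reparametrization then identifies $\liminf_{r\downarrow 0}\log\mu([\iii|_{n(r)}])/\log r$ with $s(\iii)$, yielding $\ldimloc(\pi\mu, \pi(\iii)) \le s(\iii)$ for every $\iii$.

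For the reverse inequality, Lemma \ref{thm:sup_over_x} supplies $M$ with $\#\Gamma(\pi(\iii), r) \le M$ uniformly in $\iii$ and $r$, so
\begin{equation*}
  \pi\mu(B(\pi(\iii), r)) \le \sum_{\hhh \in \Gamma(\pi(\iii), r)} \mu([\hhh]).
\end{equation*}
By a countable union over positive rationals, it is enough to show, for each such $t$, that $\ldimloc(\pi\mu, \pi(\iii)) \ge t$ for $\mu$-a.e.\ $\iii$ with $s(\iii) > t$. Fix $t$ and set
\begin{equation*}
  G^N = \{\jjj \in \Gamma : \mu([\jjj|_n]) \le \diam(E_{\jjj|_n})^t \text{ for all } n \ge N\},
\end{equation*}
which increases to a set containing $\{\jjj : s(\jjj) > t\}$. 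For $\iii \in G^N$ and $r$ so small that $n(r) \ge N$, split the cylinders in $\Gamma(\pi(\iii), r)$ into those $[\hhh]$ meeting $G^N$ (necessarily of length $\ge N$), for which any $\jjj \in [\hhh] \cap G^N$ forces $\mu([\hhh]) = \mu([\jjj|_{|\hhh|}]) \le \diam(E_\hhh)^t \le r^t$, so the total contribution is at most $Mr^t$, and those contained entirely in $\Gamma \setminus G^N$, whose contribution is bounded by $\nu_2(B(\pi(\iii), r))$, where $\nu_2 = \pi(\mu|_{\Gamma \setminus G^N})$.

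The main obstacle is controlling the residual $\nu_2(B(\pi(\iii), r))$: the crude bound $\mu(\Gamma \setminus G^N)$ does not shrink with $r$ and need not vanish as $N \to \infty$. I would address this by invoking the Lebesgue differentiation theorem for Radon measures in the doubling space $E$ (doubling by Lemma \ref{thm:ufcp_doubling}): since $\nu_2 \le \pi\mu$, there is a density $f \colon E \to [0,1]$ with $\nu_2 = f\cdot\pi\mu$ and $\nu_2(B(y,r))/\pi\mu(B(y,r)) \to f(y)$ for $\pi\mu$-a.e.\ $y$. From $\int_{G^N} f(\pi(\iii))\,\dd\mu(\iii) \le \int f\,\dd\pi\mu = \mu(\Gamma \setminus G^N)$ and Chebyshev's inequality, one extracts, for small $\eps$, a subset $F \subset G^N$ with $\mu(G^N \setminus F) \le \mu(\Gamma \setminus G^N)/\eps$ on which $f(\pi(\iii)) \le \eps$. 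For $\iii \in F$ and $r$ small, the bound $\nu_2(B(\pi(\iii), r)) \le 2\eps\,\pi\mu(B(\pi(\iii), r))$ absorbs into the decomposition to give $\pi\mu(B(\pi(\iii), r)) \le Mr^t/(1-2\eps)$, hence $\ldimloc(\pi\mu, \pi(\iii)) \ge t$ on $F$. A careful joint limit in $N$ and $\eps$, exhausting $\{s > t\}$ up to a $\mu$-null set, combined with the countable intersection over rational $t < s(\iii)$, then completes the argument.
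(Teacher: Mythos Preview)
Your upper bound invokes \ref{M4}, which is \emph{not} among the hypotheses of the proposition (only \ref{M3} and the uniform finite clustering property are assumed). Without \ref{M4} the ratio $r/\diam(E_{\iii|_{n(r)}})$ is unbounded, so your reparametrisation identifying the two liminfs breaks down. This is easily repaired: simply take $r$ along the sequence $\diam(E_{\iii|_n})$ and use $E_{\iii|_n}\subset B(\pi(\iii),\diam(E_{\iii|_n}))$ directly, as the paper does.

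The lower bound is where the real difficulty lies. Your Lebesgue--differentiation route is genuinely different from the paper's, but the ``careful joint limit in $N$ and $\eps$'' does not close. From your own Chebyshev step you get
\[
\mu\bigl(G^N\setminus F^{N,\eps}\bigr)\;\le\;\frac{\mu(\Gamma\setminus G^N)}{\eps},
\]
and since $G^N\nearrow\{\jjj: s(\jjj)\ge t\}$ you have $\mu(\Gamma\setminus G^N)\to\mu(\{s<t\})$, which need not vanish. You must take $\eps$ small for the absorption $\pi\mu(B(r))\le Mr^t/(1-2\eps)$ to be meaningful, and then the exceptional set has measure of order $\mu(\{s<t\})/\eps$, which is large. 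No diagonal choice of $(N,\eps)$ rescues this: the numerator tends to a positive constant while the denominator must tend to zero. In effect your argument only works when $\mu$ is already concentrated on $\{s>t\}$, which you cannot assume. (A salvage is possible---stratify $\{s>t\}$ by the value of the density $d\pi(\mu|_{\{s\le t\}})/d\pi\mu$, show via Cauchy--Schwarz that this density is strictly below $1$ for $\mu$-a.e.\ point of $\{s>t\}$, and treat the residual $\pi(\mu|_{G^\infty\setminus G^N})$ separately---but this is a substantial extra argument, not the ``joint limit'' you describe.)

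The paper avoids all of this with a short Borel--Cantelli argument. It sets $T(\iii,r)=\{\jjj\in\Gamma(r):\dist(E_\iii,E_\jjj)\le r\}$ and observes, by symmetry of the finite-clustering bound, the double-counting estimate
\[
\sum_{\iii\in\Gamma(r)}\sum_{\jjj\in T(\iii,r)}\mu([\jjj])\;\le\; C.
\]
Hence the set of $\iii\in\Gamma(r)$ for which $\mu([\iii])<r^\eps\sum_{\jjj\in T(\iii,r)}\mu([\jjj])$ has total $\mu$-mass at most $Cr^\eps$, and Borel--Cantelli along $r=\gamma^n$ shows that $\mu$-a.e.\ $\hhh$ eventually lies in a cylinder whose neighbours together carry at most $r^{-\eps}\mu([\hhh|_k])$ mass. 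This yields the lower bound directly, with no splitting into ``good'' and ``bad'' mass and no differentiation theorem.
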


\begin{proof}
  We may clearly assume that $\mu$ has no atoms. Fix $\iii \in \Gamma$. Since $E_{\iii|_n} \subset B(\pi(\iii), \diam(E_{\iii|_n}))$ and $\mu([\iii|_n]) \le \pi\mu(E_{\iii|_n})$ for all $n \in \N$ we have
  \begin{equation*}
    \ldimloc(\pi\mu,\pi(\iii)) \le \liminf_{n \to \infty} \frac{\log\pi\mu(B(\pi(\iii), \diam(E_{\iii|_n})))}{\log\diam(E_{\iii|_n})}
    \le \liminf_{n \to \infty} \frac{\log\mu([\iii|_n])}{\log\diam(E_{\iii|_n})}.
  \end{equation*}
  To show the other inequality, we first define
  \begin{equation*}
    T(\iii,r) = \{ \jjj \in \Gamma(r) : \dist(E_\iii,E_\jjj) \le r \}
  \end{equation*}
  for all $\iii \in \Gamma_*$ and $r>0$. According to Lemma \ref{thm:ufcp_doubling}, there is a constant $C \ge 1$ such that
  \begin{equation*}
    \# T(\iii,r) \le \#\{ \jjj \in \Gamma(r) : E_\jjj \cap B(x,2r) \ne \emptyset \} \le C
  \end{equation*}
  for all $r>0$, $\iii \in \Gamma(r)$, and $x \in \pi([\iii])$. Observe that also $\#\{ \jjj \in \Gamma(r) : \iii \in T(\jjj,r) \} \le C$ for all $\iii \in \Gamma(r)$ since otherwise $\# T(\iii,r) > C$ for some $\iii \in \Gamma(r)$. Thus each $\iii \in \Gamma(r)$ is contained in at most $C$ sets $T(\jjj,r)$ where $\jjj \in \Gamma(r)$. Therefore
  \begin{equation} \label{eq:summasumma}
    \sum_{\iii \in \Gamma(r)} \sum_{\jjj \in T(\iii,r)} \mu([\jjj]) \le \sum_{\iii \in \Gamma(r)} C\mu([\iii]) = C
  \end{equation}
  for all $r>0$.
  
  Let $\eps>0$ and define
  \begin{equation*}
    A_\eps(r) = \{ \iii \in \Gamma(r) : \mu([\iii]) \ge r^\eps \sum_{\jjj \in T(\iii,r)} \mu([\jjj]) \}
  \end{equation*}
  for all $r>0$. Recalling \eqref{eq:summasumma}, we have
  \begin{equation*}
    \mu\biggl( \Gamma \setminus \bigcup_{\iii \in A_\eps(r)} [\iii] \biggr) = \sum_{\iii \in \Gamma(r) \setminus A_\eps(r)} \mu([\iii]) \le r^\eps \sum_{\iii \in \Gamma(r) \setminus A_\eps(r)} \sum_{\jjj \in T(\iii,r)} \mu([\jjj]) \le Cr^\eps
  \end{equation*}
  for all $r>0$. If $0<\gamma<1$, then $\sum_{n=1}^\infty \mu(\Gamma \setminus \bigcup_{\iii \in A_\eps(\gamma^n)} [\iii]) < \infty$ and the Borel-Cantelli Lemma gives $\mu(A_\eps) = 1$ for $A_\eps = \bigcup_{N=1}^\infty \bigcap_{n=N}^\infty \bigcup_{\iii \in A_\eps(\gamma^n)} [\iii]$. Fix $\hhh \in A_\eps$ and notice that there exists $N \in \N$ such that $\hhh \in \bigcup_{\iii \in A_\eps(\gamma^n)} [\iii]$ for all $n \ge N$. Let $0<r<\gamma^N$ and choose $n \ge N$ so that $\gamma^{n+1} \le r < \gamma^n$. Let $k \in \N$ be such that $\hhh|_k \in \Gamma(\gamma^n)$. Notice that if $r \downarrow 0$, then $n \to \infty$ and $k \to \infty$. Since $\pi^{-1}(B(\pi(\hhh),\gamma^n)) \subset \bigcup_{\jjj \in T(\hhh|_k,\gamma^n)} [\jjj]$ and $\gamma^n \ge \diam(E_{\hhh|_k})$ we have
  \begin{equation} \label{eq:dimloc_alaraja}
  \begin{split}
    \frac{\log \pi\mu(B(\pi(\hhh),r))}{\log r} &\ge \frac{\log \pi\mu(B(\pi(\hhh),\gamma^n))}{\log \gamma^{n+1}}
    \ge \frac{\log\sum_{\jjj \in T(\hhh|_k,\gamma^n)} \mu([\jjj])}{\log \gamma^{n+1}} \\
    &\ge \frac{\log \gamma^{-n\eps} \mu([\hhh|_k])}{\log\gamma^{n+1}}
    \ge -\eps\frac{n}{n+1} + \frac{\log\mu([\hhh|_k])}{\log\gamma + \log\diam(E_{\hhh|_k})}
  \end{split}
  \end{equation}
  for all $r>0$. Letting $r \downarrow 0$ and then $\eps \downarrow 0$ gives the desired lower bound.
\end{proof}

\subsection{Microsets} \label{sec:microset}
To finish this section, we will define microsets in the shift space and show how they can be used to calculate the Assouad dimension of any closed subset of a self-conformal set.

Let $\Gamma \subset \Sigma$ be a compact set satisfying $\sigma(\Gamma) \subset \Gamma$. We define $\beta_\iii(A) = \sigma^{|\iii|}(A \cap [\iii])$ for all $A \subset \Gamma$ and $\iii \in \Sigma_*$. Note that if $A \subset \Gamma$, then $A \cap [\iii] \subset \Gamma$ and $\beta_\iii(A) \subset \sigma^{|\iii|}(\Gamma) \subset \Gamma$ for all  $\iii \in \Sigma_*$. Since $\beta_\iii(\{ \hhh \}) = \sigma^{|\iii|}(\hhh)$ and $d(\sigma^{|\iii|}(\hhh), \sigma^{|\iii|}(\kkk)) = 2^{|\iii|} d(\hhh,\kkk)$ for all $\hhh, \kkk \in \Gamma \cap [\iii]$, where $d$ is the metric of $\Sigma$, we see that, as a function defined on the family of all compact subsets of $\Gamma$, $\beta_\iii$ is continuous with respect to the Hausdorff metric.

We say that $A' \subset \Gamma$ is a \emph{miniset} of $A \subset \Gamma$ if $A' = \beta_\iii(A)$ for some $\iii \in \Sigma_*$. Furthermore, $A' \subset \Gamma$ is a \emph{microset} of a compact set $A \subset \Gamma$ if there exists a sequence $(A_n')_{n=1}^\infty$ of minisets of $A$ such that $A_n' \to A'$ in the Hausdorff metric. Note that a miniset of a compact set is clearly a microset.

The following lemma shows that microsets enjoy additional spatial invariance analogous to the well-known Preiss' principle that ``tangent measures to tangent measures are tangent measures''.

\begin{lemma} \label{thm:micromicro}
  Suppose that $\Gamma \subset \Sigma$ is a compact set satisfying $\sigma(\Gamma) \subset \Gamma$. If $A' \subset \Gamma$ is a microset of a compact set $A \subset \Gamma$ and $A'' \subset \Gamma$ is a microset of $A'$, then $A''$ is a microset of $A$.
\end{lemma}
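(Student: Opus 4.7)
The plan is to reduce everything to a single algebraic identity, namely the composition rule
\begin{equation*}
  \beta_{\iii\jjj} = \beta_\jjj \circ \beta_\iii
\end{equation*}
for all $\iii,\jjj \in \Sigma_*$, and then to run a standard diagonal argument that exploits the Hausdorff continuity of each $\beta_\iii$ noted in the paragraph preceding the lemma.

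First I would verify the composition rule by unfolding the definitions. For any $A \subset \Gamma$, a point $\hhh' \in \beta_\iii(A) \cap [\jjj]$ is of the form $\hhh' = \sigma^{|\iii|}(\hhh)$ for some $\hhh \in A \cap [\iii]$ with $\sigma^{|\iii|}(\hhh) \in [\jjj]$, which means exactly $\hhh \in A \cap [\iii\jjj]$. Applying $\sigma^{|\jjj|}$ to both sides therefore yields
\begin{equation*}
  \beta_\jjj(\beta_\iii(A)) = \sigma^{|\jjj|}\bigl(\sigma^{|\iii|}(A \cap [\iii\jjj])\bigr) = \sigma^{|\iii|+|\jjj|}(A \cap [\iii\jjj]) = \beta_{\iii\jjj}(A),
\end{equation*}
as desired.

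Next, write $A'$ as the Hausdorff limit of minisets $\beta_{\iii_n}(A)$ and $A''$ as the Hausdorff limit of minisets $\beta_{\jjj_m}(A')$. For each fixed $m$, the continuity of the map $\beta_{\jjj_m}$ on compact subsets of $\Gamma$ (noted right after its definition) combined with the composition rule gives
\begin{equation*}
  \beta_{\iii_n \jjj_m}(A) = \beta_{\jjj_m}(\beta_{\iii_n}(A)) \xrightarrow[n \to \infty]{} \beta_{\jjj_m}(A')
\end{equation*}
in the Hausdorff metric. Hence, for each $m$ I can pick $n(m)$ so large that the Hausdorff distance between $\beta_{\iii_{n(m)} \jjj_m}(A)$ and $\beta_{\jjj_m}(A')$ is at most $1/m$.

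Finally, the triangle inequality for the Hausdorff metric yields
\begin{equation*}
  d_H\bigl(\beta_{\iii_{n(m)} \jjj_m}(A),\, A''\bigr) \le \tfrac{1}{m} + d_H\bigl(\beta_{\jjj_m}(A'),\, A''\bigr) \xrightarrow[m \to \infty]{} 0,
\end{equation*}
so the minisets $\beta_{\iii_{n(m)} \jjj_m}(A)$ of $A$ converge to $A''$, and $A''$ is a microset of $A$. The only place requiring any care is the composition rule, and that is just unpacking notation; once it is in hand, the rest is the usual ``limit of limits is a diagonal limit'' trick and there is no genuine obstacle.
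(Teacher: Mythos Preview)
Your proof is correct and follows essentially the same route as the paper's: both verify the composition identity $\beta_{\jjj}\circ\beta_{\iii}=\beta_{\iii\jjj}$, invoke the Hausdorff-metric continuity of $\beta_{\jjj_m}$ to pass the limit in $n$, and then run the same diagonal/triangle-inequality argument to produce minisets $\beta_{\iii_{n(m)}\jjj_m}(A)$ converging to $A''$. The only cosmetic difference is that you choose $n(m)$ via the bound $1/m$ while the paper uses an $\varepsilon/2$ formulation.
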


\begin{proof}
  Let $(\beta_{\iii_n}(A))_{n=1}^\infty$ be a sequence of minisets converging to $A'$ in the Hausdorff metric and $(\beta_{\jjj_m}(A'))_{m=1}^\infty$ a sequence converging to $A''$. Since $\beta_{\iii_n}(A) \to A'$ we have $\beta_{\jjj_m}(\beta_{\iii_n}(A)) \to \beta_{\jjj_m}(A')$ for all $m \in \N$. Observing that
  \begin{equation*}
    \beta_{\jjj_m}(\beta_{\iii_n}(A))
    = \sigma^{|\jjj_m|}(\sigma^{|\iii_n|}(A \cap [\iii_n]) \cap [\jjj_m])
    = \sigma^{|\jjj_m|}(\sigma^{|\iii_n|}(A \cap [\iii_n] \cap [\iii_n\jjj_m]))
    = \beta_{\iii_n\jjj_m}(A)
  \end{equation*}
  for all $n,m \in \N$, we see that $(\beta_{\iii_n\jjj_m}(A))_{n=1}^\infty$ is a sequence of minisets of $A$ converging to $\beta_{\jjj_m}(A')$.
  
  Let $\eps>0$. Since $\beta_{\jjj_m}(A') \to A''$ there exists $m_0 \in \N$ such that $D(\beta_{\jjj_m}(A'),A'') < \eps/2$ for all $m \ge m_0$. Here $D$ is the Hausdorff metric. Furthermore, since $\beta_{\iii_n\jjj_m}(A) \to \beta_{\jjj_m}(A')$ for all $m \in \N$ we see that for each $m \ge m_0$ there is $n(m) \in \N$ such that $D(\beta_{\iii_{n(m)}\jjj_m}(A),\beta_{\jjj_m}(A')) < \eps/2$. Thus for each $m \ge m_0$ we have
  \begin{equation*}
    D(\beta_{\iii_{n(m)}\jjj_m}(A),A'') \le D(\beta_{\iii_{n(m)}\jjj_m}(A),\beta_{\jjj_m}(A')) + D(\beta_{\jjj_m}(A'),A'') < \eps.
  \end{equation*}
  This shows that $(\beta_{\iii_{n(m)}\jjj_m}(A))_{m=1}^\infty$ is a sequence of minisets of $A$ converging to $A''$ and hence, $A''$ is a microset of $A$.
\end{proof}

If $A \subset \Gamma$ is compact, then we define
\begin{equation*}
  N_n(A) = \max\{ \#\{ \iii \in \Gamma_n : A' \cap [\iii] \ne \emptyset \} : A' \text{ is a microset of } A \}
\end{equation*}
for all $n \in \N$.

\begin{lemma} \label{thm:submulti}
  If $\Gamma \subset \Sigma$ is a compact set satisfying $\sigma(\Gamma) \subset \Gamma$ and $A \subset \Gamma$ is compact, then the sequence $(N_n(A))_{n=1}^\infty$ is sub-multiplicative.
\end{lemma}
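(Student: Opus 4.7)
The plan is to prove $N_{n+m}(A) \le N_n(A) \cdot N_m(A)$ by a direct counting argument in which the essential ingredient is Lemma~\ref{thm:micromicro}.

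First, I would fix a microset $A'$ of $A$ for which $\#\{\hhh \in \Gamma_{n+m} : A' \cap [\hhh] \ne \emptyset\}$ attains the value $N_{n+m}(A)$ (such an $A'$ exists because the maximum is over a finite set of possible cardinalities). Set
\[
  I = \{ \iii \in \Gamma_n : A' \cap [\iii] \ne \emptyset \}.
\]
Since $A'$ is itself a microset of $A$, the definition of $N_n(A)$ immediately gives $\#I \le N_n(A)$.

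Next, for each $\iii \in I$, I would consider $\beta_\iii(A') = \sigma^{|\iii|}(A' \cap [\iii])$. This is a miniset of $A'$, and every miniset of a compact set is a microset of it, so $\beta_\iii(A')$ is a microset of $A'$. Lemma~\ref{thm:micromicro} then yields that $\beta_\iii(A')$ is in fact a microset of $A$. Consequently the definition of $N_m(A)$ gives
\[
  \#\{ \jjj \in \Gamma_m : \beta_\iii(A') \cap [\jjj] \ne \emptyset \} \le N_m(A).
\]

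Finally, I would match up level-$(n+m)$ cylinders. Any $\hhh \in \Gamma_{n+m}$ with $A' \cap [\hhh] \ne \emptyset$ factors as $\hhh = \iii\jjj$ with $\iii \in \Gamma_n$ and $\jjj \in \Gamma_m$; the inclusion $[\hhh] \subset [\iii]$ forces $\iii \in I$, and applying $\sigma^{|\iii|}$ to a point of $A' \cap [\iii\jjj]$ shows $\beta_\iii(A') \cap [\jjj] \ne \emptyset$. Combining the two bounds gives
\[
  N_{n+m}(A) = \#\{ \hhh \in \Gamma_{n+m} : A' \cap [\hhh] \ne \emptyset \} \le \sum_{\iii \in I} N_m(A) \le N_n(A) \cdot N_m(A),
\]
as desired.

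I do not expect any genuine obstacle: the argument is purely combinatorial, and its only non-elementary input is the stability of microsets under passage to minisets, which is exactly what Lemma~\ref{thm:micromicro} provides. The mild subtlety worth checking in the write-up is the equivalence $A' \cap [\iii\jjj] \ne \emptyset \iff \beta_\iii(A') \cap [\jjj] \ne \emptyset$, which follows directly from $\beta_\iii(A') = \sigma^{|\iii|}(A' \cap [\iii])$ together with $[\iii\jjj] \subset [\iii]$.
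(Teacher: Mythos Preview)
Your proposal is correct and follows essentially the same approach as the paper: choose a microset $A'$ realizing $N_{n+m}(A)$, factor each $\hhh\in\Gamma_{n+m}$ as $\iii\jjj$, use Lemma~\ref{thm:micromicro} to see that $\beta_\iii(A')$ is again a microset of $A$, and bound accordingly. The paper carries out the identical decomposition and the same appeal to Lemma~\ref{thm:micromicro}; your write-up is slightly more explicit in naming the index set $I$, but there is no substantive difference.
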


\begin{proof}
  Fix $n,m \in \N$ and let $A' \subset \Gamma$ be a microset of $A$ such that
  \begin{equation*}
    N_{n+m}(A) = \#\{ \hhh \in \Gamma_{n+m} : A' \cap [\hhh] \ne \emptyset \}.
  \end{equation*}
  Fix $\hhh \in \Gamma_{n+m}$ such that $A' \cap [\hhh] \ne \emptyset$ and let $\iii = \hhh|_n \in \Gamma_n$ and $\jjj = \sigma^n(\hhh) \in \Gamma_m$. Since $A' \cap [\iii\jjj] \ne \emptyset$ we trivially have $A' \cap [\iii] \ne \emptyset$ and $\iii \in \{ \kkk \in \Gamma_n : A' \cap [\kkk] \ne \emptyset \}$. Observe that we also have
  \begin{equation*}
    \emptyset \ne \beta_\iii(A' \cap [\iii\jjj]) = \sigma^{|\iii|}(A' \cap [\iii\jjj] \cap [\iii]) = \sigma^{|\iii|}(A' \cap [\iii]) \cap [\jjj] = \beta_\iii(A') \cap [\jjj]
  \end{equation*}
  and thus $\jjj \in \{ \kkk \in \Gamma_m : \beta_\iii(A') \cap [\kkk] \ne \emptyset \}$. Since, by Lemma \ref{thm:micromicro}, $\beta_\iii(A')$ is a microset of $A$ we conclude that
  \begin{equation*}
    N_{n+m}(A) \le \#\{ \kkk \in \Gamma_n : A' \cap [\kkk] \ne \emptyset \} \cdot N_m(A) \le N_n(A) N_m(A)
  \end{equation*}
  for all $n,m \in \N$.
\end{proof}

The following proposition relates the asymptotic behavior of microsets to the Assouad dimension.

\begin{proposition} \label{thm:dimf}
  Suppose that $X$ is a complete metric space and $\Gamma \subset \Sigma$ is a compact set satisfying $\sigma(\Gamma) \subset \Gamma$. Let $\{ E_\iii : \iii \in \Gamma_* \}$ be a Moran construction satisfying \ref{M3} and the uniform finite clustering property, and let $0<\ualpha<1$ be as in Lemma \ref{upperalpha}. If $A \subset \Gamma$ is compact, then
  \begin{equation*}
    \dima(\pi(A')) \le \lim_{n \to \infty} \frac{\log N_n(A)}{\log\ualpha^{-n}} 
  \end{equation*}
  for all microsets $A'$ of $A$.
\end{proposition}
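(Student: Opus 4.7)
First, the limit on the right exists by Fekete's lemma: Lemma \ref{thm:submulti} gives that $(\log N_n(A))_n$ is sub-additive, so $\lim_n \log N_n(A)/n$ exists in $[0,\infty)$, and dividing by $\log\ualpha^{-1}$ yields a finite number which I denote by $s$. I will show $\dima(\pi(A')) \le s$ by producing, for any $\eps>0$, an Assouad covering estimate of the form $M\cdot N_n(A) \le C (R/r)^{s+\eps}$, uniformly in $x \in \pi(A')$ and $0 < r < R$.

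Fix such $x$, $r$, $R$. Since scales $R > \diam(E_\varnothing)$ only improve the estimate (and can be absorbed into the constant), I may assume $R \le \diam(E_\varnothing)$. The covering argument works at two scales. At the coarse scale $R$, the uniform finite clustering property supplies a constant $M$ with $\#\Gamma(x,R) \le M$, yielding
\begin{equation*}
  \pi(A') \cap B(x,R) \subset \bigcup_{\iii \in \Gamma(x,R)} \pi(A' \cap [\iii]).
\end{equation*}
At the fine scale $r$, let $C_1$ be the constant from Lemma \ref{upperalpha} and choose $n$ to be the least integer with $C_1\ualpha^n \le r/(DR)$. Then by \ref{M3} and Lemma \ref{upperalpha}, $\diam(E_{\iii\jjj}) \le D\diam(E_\iii)\diam(E_\jjj) \le r$ for every $\iii \in \Gamma(x,R)$ and every $\jjj \in \Gamma_n$ with $\iii\jjj \in \Gamma_*$, and
\begin{equation*}
  \pi(A' \cap [\iii]) \subset \bigcup_{\substack{\jjj \in \Gamma_n \\ A' \cap [\iii\jjj] \ne \emptyset}} E_{\iii\jjj}.
\end{equation*}

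The critical step is to bound the inner index set. Note that $A' \cap [\iii\jjj] \ne \emptyset$ is equivalent to $\beta_\iii(A') \cap [\jjj] \ne \emptyset$. The set $\beta_\iii(A')$ is a miniset of $A'$, hence trivially a microset of $A'$, and therefore a microset of $A$ by Lemma \ref{thm:micromicro}. By the very definition of $N_n(A)$,
\begin{equation*}
  \#\{\jjj \in \Gamma_n : \beta_\iii(A') \cap [\jjj] \ne \emptyset\} \le N_n(A).
\end{equation*}
Combining the two inclusions, $\pi(A') \cap B(x,R)$ is covered by at most $M \cdot N_n(A)$ sets of diameter $\le r$; each such set meeting $\pi(A')$ is contained in an $r$-ball centered in $\pi(A')$.

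To finish, fix $\eps > 0$. For all sufficiently large $n$, $N_n(A) \le \ualpha^{-n(s+\eps)}$; moreover the minimality in the choice of $n$ gives $\ualpha^{-n} \le \ualpha^{-1}C_1 D R/r$, and hence $M\cdot N_n(A) \le C_2 (R/r)^{s+\eps}$ for a constant $C_2$ independent of $x$, $r$, and $R$. This establishes $\dima(\pi(A')) \le s+\eps$, and letting $\eps \downarrow 0$ yields the claim. The only delicate point is the ``tangents-of-tangents'' closure property from Lemma \ref{thm:micromicro}, which ensures that every magnification $\beta_\iii(A')$ of the given microset is itself a microset of $A$; this is what lets the single uniform bound $N_n(A)$ control the count in each cell $\iii$ simultaneously.
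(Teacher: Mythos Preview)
Your argument is correct and follows essentially the same route as the paper: uniform finite clustering at scale $R$, an appropriate $n$ determined by the ratio $r/R$, Lemma \ref{thm:micromicro} to make $\beta_\iii(A')$ a microset of $A$, and the bound $N_n(A)$ on the cell count. The paper handles the threshold for $N_n(A)\le\ualpha^{-nt}$ by offsetting the choice of $n$ by $n_0$ (so the inequality holds for every $r<R$), whereas in your version you should make explicit that the finitely many small $n$---equivalently $r/R$ bounded away from $0$---are absorbed into the constant $C_2$; otherwise the proofs coincide.
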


\begin{proof}
  Observe first that the limit on the right-hand side exists by Lemma \ref{thm:submulti} and the standard theory of sub-multiplicative sequences. Fix $t > \lim_{n \to \infty} \log N_n(A)/\log\ualpha^{-n}$ and let $A' \subset \Gamma$ be a microset of $A$. Let $0<R<\diam(\pi(A'))$ and $x \in \pi(A')$. The uniform finite clustering property implies that there exists a constant $M \in \N$ not depending on $x$ nor $R$ such that $\#\Gamma(x,R) \le M$. Observe that if $\iii \in \Gamma(R)$ is such that $\pi([\iii]) \cap B(x,R) \ne \emptyset$, then $\iii \in \Gamma(x,R)$. Thus it suffices to show that for each $\iii \in \Gamma(x,R)$ the set $\pi(A' \cap [\iii])$ can be covered by at most $C_0(r/R)^{-t}$ balls of radius $r$ for all $0<r<R$.
  
  By the choice of $t$ there is $n_0 \in \N$ such that $N_n(A) \le \ualpha^{-nt}$ for all $n \ge n_0$. Fix $\iii \in \Gamma(x,R)$ and $0<r<R$. Choose $n \ge n_0$ so that
  \begin{equation} \label{eq:n0choice}
    \ualpha^{n-n_0} \le \frac{r}{CDR} < \ualpha^{n-n_0-1}
  \end{equation} 
  where $D \ge 1$ is as in \ref{M3} and $C \ge 1$ is as in Lemma \ref{upperalpha}. Recall that, by Lemma \ref{thm:micromicro}, $\beta_\iii(A' \cap [\iii]) = \beta_\iii(A')$ is a microset of $A$. Denoting $N_n(\iii) = \{ \jjj \in \Gamma_n : \beta_\iii(A') \cap [\jjj] \ne \emptyset \}$, the choices of $t$ and $n$ give
  \begin{equation} \label{eq:af1}
    \# N_n(\iii) \le N_n(A) \le \ualpha^{-nt} < \ualpha^{-(n_0+1)t} C^tD^t \Bigl( \frac{r}{R} \Bigr)^{-t}.
  \end{equation} 
  Observe that $A' \cap [\iii\jjj] \ne \emptyset$ and thus $\iii\jjj \in \Gamma_*$ for all $\jjj \in N_n(\iii)$. It follows that $A' \cap [\iii] \subset \bigcup_{\jjj \in N_n(\iii)} [\iii\jjj]$ and hence
  \begin{equation} \label{eq:af2}
    \pi(A' \cap [\iii]) \subset \bigcup_{\jjj \in N_n(\iii)} E_{\iii\jjj}.
  \end{equation} 
  If $\jjj \in N_n(\iii)$, then, by \ref{M3}, the fact that $\iii \in \Gamma(R)$, Lemma \ref{upperalpha}, and \eqref{eq:n0choice}, we get
  \begin{equation} \label{eq:af3}
    \diam(E_{\iii\jjj}) \le D\diam(E_\iii)\diam(E_\jjj) \le CDR\ualpha^n \le r.
  \end{equation} 
  The proof is now finished since \eqref{eq:af2}, \eqref{eq:af3}, and \eqref{eq:af1} show that $\pi(A' \cap [\iii])$ can be covered by at most $C_0(r/R)^{-t}$ balls of radius $r$.
\end{proof}

\begin{remark}
  If $\Gamma \subset \Sigma$ is a compact set satisfying $\sigma(\Gamma) \subset \Gamma$ and $\{ E_\iii : \iii \in \Gamma_* \}$ is a Moran construction on a complete metric space, then
  \begin{equation*}
    \dima(\pi(A)) \le \sup\{ \dima(\pi(A')) : A' \text{ is a microset of } A \}
  \end{equation*}
  for all compact sets $A \subset \Gamma$. This is a triviality since $A$ is a microset of $A$. It is also easy to come up with a Moran construction
  for which the above inequality is strict for some set $A$. For example, if $\pi([j])$ is a square and the projection of $A = [ij]$ is a line, then $\beta_i(A) = [j]$ and $\dima(\pi(A)) = 1 < 2 = \dima(\pi(\beta_i(A)))$. However, if $E$ is the limit set of the Moran construction, then $\pi(A') \subset E$ for all microsets $A'$ of $\Gamma$ and
  \begin{equation*}
    \dima(E) = \sup\{ \dima(\pi(A')) : A' \text{ is a microset of } \Gamma \}
  \end{equation*}
  by the monotonicity of the Assouad dimension.
\end{remark}

If the magnification $\beta_\iii$, considered as an action on the complete metric space $X$, is geometrically nice enough, we are able to express the Assouad dimension of a subset of the limit set by means of its microsets.

\begin{proposition} \label{thm:dima_cifs}
  Suppose that $\{ \fii_i \}_{i=1}^\kappa$ is a conformal iterated function system, $\Gamma \subset \Sigma$ is a compact set satisfying $\sigma(\Gamma) \subset \Gamma$, and $\{ \fii_\iii(W) : \iii \in \Gamma_* \}$ is a Moran construction as in Lemma \ref{thm:ifs}. If $A \subset \Gamma$ is compact, then
  \begin{equation*}
    \dima(\pi(A)) = \sup\{ \dima(\pi(A')) : A' \text{ is a microset of } A \}.
  \end{equation*}
\end{proposition}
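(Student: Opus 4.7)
The inequality $\dima(\pi(A)) \le \sup\{ \dima(\pi(A')) : A' \text{ is a microset of } A \}$ is free from the remark preceding the proposition, since $A$ is a microset of itself. The real work is in the reverse inequality: I will show that $\dima(\pi(A')) \le \dima(\pi(A))$ for every microset $A'$ of $A$, first verifying it on minisets via conformal bounded distortion and then passing to Hausdorff limits with uniform constants.

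Conformality of the iterated function system together with \eqref{BDP_ehto} yields the standard bounded distortion estimate: there is $K \ge 1$, independent of $\jjj \in \Sigma_*$, such that
\begin{equation*}
  K^{-1} \diam(\fii_\jjj(W)) d(x,y) \le d(\fii_\jjj(x),\fii_\jjj(y)) \le K \diam(\fii_\jjj(W)) d(x,y)
\end{equation*}
for all $x,y \in W$. Consequently $\fii_\jjj^{-1} \colon \fii_\jjj(W) \to W$ is bi-Lipschitz with constants proportional to $\diam(\fii_\jjj(W))^{-1}$ and their ratio bounded by $K^2$. The identity $\pi(\beta_\jjj(A)) = \fii_\jjj^{-1}(\pi(A \cap [\jjj]))$ then expresses a miniset as a bi-Lipschitz image of a subset of $\pi(A)$. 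Fixing $t > \dima(\pi(A))$ with Assouad constant $C_0$, and taking $y \in \pi(\beta_\jjj(A))$ with $0 < r < R$, one covers $\pi(A) \cap B(\fii_\jjj(y), K\diam(\fii_\jjj(W)) R)$ by at most $C_0 K^{2t}(r/R)^{-t}$ balls of radius $K^{-1}\diam(\fii_\jjj(W)) r$, and pulls the cover back through $\fii_\jjj^{-1}$ to obtain a cover of $\pi(\beta_\jjj(A)) \cap B(y,R)$ by $C_0 K^{2t}(r/R)^{-t}$ balls of radius $r$. The crucial feature is that the covering constant $C_0 K^{2t}$ is uniform in $\jjj$.

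For a general microset $A' = \lim_n \beta_{\jjj_n}(A)$ in the Hausdorff metric on $\Gamma$, uniform continuity of $\pi$ on the compact set $\Gamma$ gives $\pi(\beta_{\jjj_n}(A)) \to \pi(A')$ in the Hausdorff metric on $X$. A routine $\eps$-neighborhood argument shows that a uniform Assouad covering bound of the form ``$F_n \cap B(x,R)$ is covered by at most $C(r/R)^{-t}$ balls of radius $r$'' transfers to any Hausdorff limit $F$, at most absorbing a harmless multiplicative constant into $C$. Applying this to the minisets $\pi(\beta_{\jjj_n}(A))$ yields $\dima(\pi(A')) \le t$, and letting $t \downarrow \dima(\pi(A))$ followed by taking the supremum over $A'$ finishes the proof.

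The main obstacle is precisely the uniformity of the Assouad covering bound across all minisets: the conformal bounded distortion property ensures that the bi-Lipschitz constants of $\fii_\jjj^{-1}$ on $\fii_\jjj(W)$, after rescaling by $\diam(\fii_\jjj(W))$, stay in the interval $[K^{-1},K]$ irrespective of $|\jjj|$. This uniformity is what fails for a generic Moran construction, and is why, as the preceding remark exhibits, strict inequality can occur in that broader setting.
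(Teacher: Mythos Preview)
Your proof is correct and follows essentially the same route as the paper: use conformal bounded distortion to obtain a uniform Assouad covering bound for all minisets $\pi(\beta_\jjj(A))$ with a constant independent of $\jjj$, then conclude for microsets. The paper phrases bounded distortion in the ball-image form $\fii_\iii(B(x,r)) \subset B(\fii_\iii(x),C_1\diam(\fii_\iii(W))r)$ and invokes doubling of $\R^d$ to fix up the final radius, and it leaves the passage from minisets to microsets implicit (simply noting the constant does not depend on the miniset), whereas you state the bi-Lipschitz form and spell out the Hausdorff-limit step explicitly; these are cosmetic differences, not substantive ones.
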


\begin{proof}
  Fix $t > \dima(\pi(A))$ and let $A'$ be a miniset of $A$. By showing that there exists a constant $C_0 \ge 1$ not depending on $A'$ so that $\pi(A') \cap B(x,R)$ can be covered by at most $C_0(r/R)^{-t}$ balls of radius $r$ for all $0<r<R<\diam(\pi(A'))$, we have proven the claim.
  
  Let $\iii \in \Sigma_*$ be such that $A' = \beta_\iii(A)$. Notice that $\pi(A') = \fii_\iii^{-1}(\pi(A \cap [\iii]))$. According to the well known bounded distortion property (see e.g.\ \cite[\S 2]{MauldinUrbanski1996}), there exists $C_1 \ge 1$ such that
  \begin{equation} \label{eq:tupu}
    B(\fii_\iii(x),C_1^{-1}\diam(\fii_\iii(W))r) \subset \fii_\iii(B(x,r)) \subset B(\fii_\iii(x),C_1\diam(\fii_\iii(W))r)
  \end{equation}
  for all $x \in W$ and $r>0$ with $B(x,r) \subset W$. Fix $0<r<R<\diam(\pi(A'))$ and observe that
  \begin{equation} \label{eq:hupu}
  \begin{split}
    \pi(A') \cap B(x,R) &= \fii_\iii^{-1}\bigl( \pi(A \cap [\iii]) \cap \fii_\iii(B(x,R)) \bigr) \\
    &\subset \fii_\iii^{-1}\bigl( \pi(A \cap [\iii]) \cap B(\fii_\iii(x),C_1\diam(\fii_\iii(W))R) \bigr).
  \end{split}
  \end{equation}
  By the choice of $t$, there exists $C_2 \ge 1$ not depending on $A'$ such that the set $\pi(A \cap [\iii]) \cap B(\fii_\iii(x),C_1\diam(\fii_\iii(W))R)$ can be covered by at most $C_2(r/R)^{-t}$ balls of radius $C_1\diam(\fii_\iii(W))r$ centered at $\pi(A \cap [\iii])$. Thus, by \eqref{eq:tupu} and \eqref{eq:hupu}, the set $\pi(A') \cap B(x,R)$ can be covered by at most $C_2(r/R)^{-t}$ balls of radius $C_1^2 r$ centered at $\pi(A')$. Since the set $\pi(A')$, as a subset of $\R^d$, is doubling there exists a constant $N \in \N$ depending only on $d$ and $C_1$ so that any ball of radius $C_1^2 r$ can be covered by at most $N$ balls of radius $r$. The claim follows.
\end{proof}

\section{Distributions} \label{sec:distributions}

In this section, we review the main properties of CP-distributions. We aim to present only the properties we need. We will be brief in places which rely on standard measure theoretical arguments.

\subsection{Adapted distributions}
Suppose that $\Gamma \subset \Sigma$ is a compact set satisfying $\sigma(\Gamma) \subset \Gamma$. Let $\PP(X)$ be the set of all Borel probability measures defined on a given metric space $X$. Recall that if $X$ is compact, then $\PP(X)$ is metrizable and compact in the weak topology.
We consider the set $\PP(\Sigma)$ and let $\PP(\Gamma) = \{ \mu \in \PP(\Sigma) : \spt(\mu) \subset \Gamma \}$. We define
\begin{equation} \label{eq:omegagamma}
  \Omega_\Gamma = \{ (\mu,\iii) \in \PP(\Gamma) \times \Gamma : \iii \in \spt(\mu) \}.
\end{equation}
The space $\Omega_\Gamma$ has the subspace topology inherited from the product space $\PP(\Gamma) \times \Gamma$. Since $(\mu,\iii) \mapsto \mu([\iii|_n])$ is continuous we see that the set $\{ (\mu,\iii) \in \PP_\Gamma : \mu([\iii|_n]) > 0 \}$ is open for all $n \in \N$ and therefore
\begin{equation*}
  \Omega_\Gamma = \bigcap_{n=0}^\infty \{ (\mu,\iii) \in \PP_\Gamma : \mu([\iii|_n]) > 0 \}
\end{equation*}
is a Borel set.

Measures on $\PP(\Gamma)$ and $\PP(\Gamma) \times \Gamma$ will be called distributions. To simplify notation, we abbreviate $\PP(\Gamma) \times \Gamma$ as $\PP_\Gamma$.
We say that a distribution $Q$ on $\PP_\Gamma$ is \emph{adapted} if there exists a distribution $\overline{Q}$ on $\PP(\Gamma)$ such that
\begin{equation} \label{eq:adapted}
  \int_{\PP_\Gamma} f(\mu,\iii) \dd Q(\mu,\iii) = \int_{\PP(\Gamma)} \int_{\Gamma} f(\mu,\iii) \dd\mu(\iii) \dd\overline{Q}(\mu)
\end{equation}
for all $f \in C(\PP_\Gamma)$. Here $C(X)$ is the space of all continuous functions $X \to \R$. In other words, $Q$ is adapted if choosing a pair $(\mu,\iii)$ according to $Q$ can be done in two-step process, by first choosing $\mu$ according to $\overline{Q}$ and then choosing $\iii$ according to $\mu$.

The following five lemmas summarize the properties of adapted distributions. The goal is to verify that limits of certain averages of adapted distributions remain adapted. This is crucial in the proof of Proposition \ref{thm:exists-invariant}.

\begin{lemma} \label{thm:adapted_def}
  A distribution $Q$ on $\PP_\Gamma$ is adapted if and only if there exists a distribution $\overline{Q}$ on $\PP(\Gamma)$ such that
  \begin{equation*}
    \int_{\PP_\Gamma} f(\mu,\iii) \dd Q(\mu,\iii) = \int_{\PP(\Gamma)} \int_{\Gamma} f(\mu,\iii) \dd\mu(\iii) \dd\overline{Q}(\mu)
  \end{equation*}
  for all essentially bounded and measurable functions $f \colon \PP_\Gamma \to \R$.
\end{lemma}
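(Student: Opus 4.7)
The ``if'' direction is immediate: every $f \in C(\PP_\Gamma)$ is bounded (since $\PP_\Gamma = \PP(\Gamma) \times \Gamma$ is compact metric) and hence essentially bounded, so the assumed identity for essentially bounded measurable functions specializes to \eqref{eq:adapted}. The content is in the ``only if'' direction, and my plan is to reduce it to the uniqueness of Borel probability measures on a compact metric space.

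Given an adapted $Q$ with associated $\overline{Q}$ on $\PP(\Gamma)$, I would introduce a candidate distribution $\nu$ on $\PP_\Gamma$ by
\begin{equation*}
  \nu(A) = \int_{\PP(\Gamma)} \int_{\Gamma} \mathbf{1}_A(\mu,\iii) \dd\mu(\iii) \dd\overline{Q}(\mu)
\end{equation*}
for Borel $A \subset \PP_\Gamma$. The assumption \eqref{eq:adapted} says precisely that $\int f \dd Q = \int f \dd \nu$ for every $f \in C(\PP_\Gamma)$. Since $\PP_\Gamma$ is compact metric, Borel probability measures are Radon and are uniquely determined by their action on continuous functions, so $Q = \nu$ as Borel measures. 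Once this equality of measures is in hand, the identity for essentially bounded measurable $f$ follows by the standard approximation argument: modify $f$ on a $Q$-null set (which is also $\nu$-null) to get a genuinely bounded measurable function, then approximate by simple functions and pass to the limit on both sides.

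The only step requiring care is the well-definedness of $\nu$, which amounts to showing that the map $\mu \mapsto \int_{\Gamma} \mathbf{1}_A(\mu,\iii) \dd\mu(\iii) = \mu(\{\iii \in \Gamma : (\mu,\iii) \in A\})$ is Borel measurable in $\mu$ for every Borel $A \subset \PP_\Gamma$. I would handle this by a monotone class argument: for $A$ open in $\PP_\Gamma$, the function $\mathbf{1}_A$ is lower semicontinuous and can be written as a pointwise supremum of a countable family of continuous functions $g_n$, and $\mu \mapsto \int g_n \dd\mu$ is continuous by definition of the weak topology, so measurability for open $A$ follows by monotone convergence. The class of Borel sets for which the map is measurable is closed under complements and countable disjoint unions (by dominated/monotone convergence applied pointwise in $\mu$) and contains the open sets, hence coincides with the full Borel $\sigma$-algebra. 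Countable additivity of $\nu$ is then immediate from monotone convergence applied in both integrals, and $\nu(\PP_\Gamma) = 1$ since each $\mu \in \PP(\Gamma)$ is a probability measure.

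The main (mild) obstacle is keeping track of measurability in $\mu$ of the inner integral; everything else is a routine application of the Riesz representation framework and monotone class reasoning, so the proof should be short once the measurability lemma is in place.
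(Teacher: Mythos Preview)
Your proposal is correct. The paper takes a somewhat different route: after handling the trivial direction, it first extends the identity from continuous functions to indicators of open sets (via Urysohn's lemma and dominated convergence), and then passes to arbitrary essentially bounded measurable functions by Lusin's theorem, using the open-set case to control the approximation error on the right-hand side. You instead package the right-hand side as a Borel probability measure $\nu$ on $\PP_\Gamma$, verify its well-definedness by a monotone class argument on the measurability of $\mu \mapsto \int \mathbf{1}_A(\mu,\iii)\dd\mu(\iii)$, and then conclude $Q=\nu$ directly from the continuous-function identity via uniqueness in the Riesz representation theorem, bypassing Lusin entirely. Both arguments are short and standard; yours is arguably more conceptual in that it makes explicit that the two sides of the identity are the same Borel measure, while the paper's argument stays at the level of function approximation and trades your monotone-class measurability check for a Lusin--Tietze step.
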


\begin{proof}
  Since the other direction is trivial let us assume that $Q$ is adapted. The first step is to show that the claim holds for the indicator function of an open set. This follows from the dominated convergence theorem and \eqref{eq:adapted} since, by Urysohn's lemma, we may approximate the indicator function by a continuous function arbitrary well. The proof of the claim then follows from Lusin's theorem.
\end{proof}

The following lemma follows from \cite[Lemma 5.6]{Hochman2010}. Our proof below exhibits an alternative argument.

\begin{lemma} \label{thm:adapted_omega}
  If $Q$ is an adapted distribution on $\PP_\Gamma$, then $Q(\Omega_\Gamma) = 1$.
\end{lemma}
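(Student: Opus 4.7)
The plan is to exploit the representation $\Omega_\Gamma = \bigcap_{n=0}^\infty A_n$ where $A_n = \{ (\mu,\iii) \in \PP_\Gamma : \mu([\iii|_n]) > 0 \}$, shown in the text to be a countable intersection of open sets, and prove that $Q(A_n) = 1$ for every $n$. Writing $B_n = \PP_\Gamma \setminus A_n = \{ (\mu,\iii) : \mu([\iii|_n]) = 0 \}$, it suffices to verify that $Q(B_n) = 0$ for each $n \in \N$, and then conclude by countable subadditivity.

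Fix $n \in \N$. Since the cylinder $[\iii|_n]$ depends only on $\iii|_n$, and there are only finitely many possible values $\jjj \in \Sigma_n$, the function $(\mu,\iii) \mapsto \mu([\iii|_n])$ is continuous on $\PP_\Gamma$ (as each $\mu \mapsto \mu([\jjj])$ is continuous in the weak topology because cylinders are clopen). In particular, $B_n$ is closed and hence measurable, and the indicator $\mathbf{1}_{B_n}$ is bounded and measurable. Applying Lemma \ref{thm:adapted_def} to $\mathbf{1}_{B_n}$ gives
\begin{equation*}
  Q(B_n) = \int_{\PP(\Gamma)} \int_\Gamma \mathbf{1}_{B_n}(\mu,\iii) \dd\mu(\iii) \dd\overline{Q}(\mu) = \int_{\PP(\Gamma)} \mu\bigl(\{ \iii \in \Gamma : \mu([\iii|_n]) = 0 \}\bigr) \dd\overline{Q}(\mu).
\end{equation*}

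The key observation is that for each fixed $\mu \in \PP(\Gamma)$, the inner $\mu$-measure vanishes. Indeed, the set $\{ \iii \in \Gamma : \mu([\iii|_n]) = 0 \}$ equals the disjoint union $\bigcup_{\jjj} [\jjj] \cap \Gamma$, where $\jjj$ ranges over the finite collection of words in $\Sigma_n$ with $\mu([\jjj]) = 0$. Since this union is finite and each constituent has $\mu$-measure zero, the total is zero. Consequently $Q(B_n) = 0$ for every $n$, and therefore
\begin{equation*}
  Q(\Omega_\Gamma) = 1 - Q\Bigl( \bigcup_{n=0}^\infty B_n \Bigr) \ge 1 - \sum_{n=0}^\infty Q(B_n) = 1,
\end{equation*}
as required. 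The proof is essentially a bookkeeping exercise once Lemma \ref{thm:adapted_def} is available; the only subtlety is that measurability of $B_n$ must be checked before invoking adaptedness on its indicator, which is why this step relies on the (straightforward) continuity of $(\mu,\iii) \mapsto \mu([\iii|_n])$.
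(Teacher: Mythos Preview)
Your proof is correct and follows essentially the same approach as the paper: both apply Lemma~\ref{thm:adapted_def} to the indicator of (a piece of) the complement of $\Omega_\Gamma$ and observe that the inner $\mu$-integral vanishes. The only cosmetic difference is that the paper handles the full complement in one stroke via the identification $\PP_\Gamma \setminus \Omega_\Gamma = \{(\mu,\iii) : \iii \notin \spt(\mu)\}$, whereas you decompose it into the countably many closed sets $B_n$ and treat each separately.
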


\begin{proof}
  Since
  \begin{equation*}
    \PP_\Gamma \setminus \Omega_\Gamma = \bigcup_{\mu \in \PP(\Gamma)} \{ \mu \} \times (\Gamma \setminus \spt(\mu)) = \{ (\mu,\iii) \in \PP_\Gamma : \iii \notin \spt(\mu) \}
  \end{equation*}
  Lemma \ref{thm:adapted_def} gives
  \begin{equation*}
    \int_{\PP_\Gamma} \chi_{\PP_\Gamma \setminus \Omega_\Gamma}(\mu,\iii) \dd Q(\mu,\iii) = \int_{\PP(\Gamma)} \int_\Gamma \chi_{\Gamma \setminus \spt(\mu)}(\iii) \dd\mu(\iii)\dd\overline{Q}(\mu) = 0
  \end{equation*}
  as claimed.
\end{proof}

We have learnt the following observation from \cite{Shmerkin2011}. We sketch the proof for the convenience of the reader.

\begin{lemma} \label{thm:adapted2}
  A distribution $Q$ on $\PP_\Gamma$ is adapted if and only if there exists a distribution $\overline{Q}$ on $\PP(\Gamma)$ such that
  \begin{equation} \label{eq:adapted2}
    \int_{\PP_\Gamma} f(\mu)g(\iii) \dd Q(\mu,\iii) = \int_{\PP(\Gamma)} f(\mu) \int_\Gamma g(\iii) \dd\mu(\iii) \dd\overline{Q}(\mu)
  \end{equation}
  for all $f \in C(\PP(\Gamma))$ and $g \in C(\Gamma)$.
\end{lemma}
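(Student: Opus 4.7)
The forward direction is essentially automatic: if $Q$ is adapted with witness $\overline{Q}$, then for any $f \in C(\PP(\Gamma))$ and $g \in C(\Gamma)$ the function $(\mu,\iii) \mapsto f(\mu)g(\iii)$ is continuous on $\PP_\Gamma$, so substituting it into \eqref{eq:adapted} yields \eqref{eq:adapted2}. My plan is to dispose of this direction in one line.

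For the converse I would use a Stone--Weierstrass argument. Since $\Gamma \subset \Sigma$ is compact and $\PP(\Gamma)$ is compact in the weak topology, the product $\PP_\Gamma$ is a compact metric space. Consider the linear span $\mathcal{A}$ of all functions of the form $(\mu,\iii) \mapsto f(\mu)g(\iii)$ with $f \in C(\PP(\Gamma))$ and $g \in C(\Gamma)$. Then $\mathcal{A}$ contains the constants (take $f\equiv 1$, $g\equiv 1$), separates points of $\PP_\Gamma$ (the first coordinate is separated by the functions $\mu\mapsto f(\mu)$, the second by $\iii\mapsto g(\iii)$, by Urysohn's lemma in each factor), and is closed under multiplication since $(f_1(\mu)g_1(\iii))(f_2(\mu)g_2(\iii)) = (f_1f_2)(\mu)\cdot (g_1g_2)(\iii) \in \mathcal{A}$. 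By the Stone--Weierstrass theorem $\mathcal{A}$ is uniformly dense in $C(\PP_\Gamma)$.

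Now I would extend \eqref{eq:adapted2}. By linearity, the identity
\begin{equation*}
  \int_{\PP_\Gamma} F(\mu,\iii) \dd Q(\mu,\iii) = \int_{\PP(\Gamma)} \int_\Gamma F(\mu,\iii) \dd\mu(\iii) \dd\overline{Q}(\mu)
\end{equation*}
holds for every $F \in \mathcal{A}$. Given $F \in C(\PP_\Gamma)$ and $\eps>0$, pick $F_\eps \in \mathcal{A}$ with $\|F-F_\eps\|_\infty < \eps$. Both sides of the displayed identity are linear functionals on $C(\PP_\Gamma)$ of norm at most $1$: the left-hand side is bounded by $\|F\|_\infty$ because $Q$ is a probability measure, and for the right-hand side the inner integral against the probability measure $\mu$ is bounded by $\|F\|_\infty$ pointwise in $\mu$, after which integration against the probability measure $\overline{Q}$ preserves this bound (measurability of the inner integral in $\mu$ is standard and follows, for instance, by monotone class arguments or by noting it holds on the dense subalgebra $\mathcal{A}$ and passes to uniform limits). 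Thus both sides of the identity applied to $F$ differ from the corresponding sides applied to $F_\eps$ by at most $\eps$, and since they agree for $F_\eps$ they must agree for $F$. This shows that $Q$ is adapted with the same witness $\overline{Q}$.

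The only step that could be considered subtle is the measurability of $\mu \mapsto \int_\Gamma F(\mu,\iii)\dd\mu(\iii)$ for general continuous $F$, needed to make sense of the right-hand side; I expect this to be the main technical point, but it is handled exactly as in the proof of Lemma \ref{thm:adapted_def} (approximate by elements of $\mathcal{A}$ and pass to the uniform limit), so no real obstacle arises.
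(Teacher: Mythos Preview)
Your argument is correct. The forward direction is indeed trivial, and your Stone--Weierstrass route for the converse is clean: the tensor algebra is uniformly dense in $C(\PP_\Gamma)$, and both sides of the identity are continuous linear functionals of norm $1$, so agreement on the dense subalgebra suffices. One small comment: the map $\mu \mapsto \int_\Gamma F(\mu,\iii)\dd\mu(\iii)$ is in fact continuous on $\PP(\Gamma)$ for every $F \in C(\PP_\Gamma)$ (use uniform continuity of $F$ on the compact product together with weak convergence), so the measurability issue you flag as ``the main technical point'' is not actually subtle.

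The paper proceeds differently. Rather than appealing to Stone--Weierstrass, it exploits the shift-space structure: given $f \in C(\PP_\Gamma)$ it builds an explicit increasing sequence
\[
  f_n(\mu,\iii) = \sum_{\jjj \in \Gamma_n} \min\{ f(\mu,\hhh) : \hhh \in [\jjj] \}\cdot \chi_{[\jjj]}(\iii),
\]
which is a finite sum of products of functions in $C(\PP(\Gamma))$ and $C(\Gamma)$ because cylinders are clopen. One then applies \eqref{eq:adapted2} termwise and passes to the limit by monotone convergence. Your approach is more general (it works for any compact metric factors) and arguably slicker; the paper's approach is more hands-on and has the side benefit that the very same cylinder-partition approximation is reused in the proof of Lemma~\ref{thm:invariant_adapted}.
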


\begin{proof}
  Since adaptedness clearly implies \eqref{eq:adapted2} let us assume that there exists a distribution $\overline{Q}$ on $\PP(\Gamma)$ such that \eqref{eq:adapted2} holds for all functions in $C(\PP(\Gamma))$ and $C(\Gamma)$. Fix $f \in C(\PP_\Gamma)$ and for each $n \in \N$ define a function $f_n \colon \PP_\Gamma \to \R$ by setting
  \begin{equation*}
    f_n(\mu,\iii) = \sum_{\jjj \in \Gamma_n} \min\{ f(\mu,\hhh) : \hhh \in [\jjj] \} \cdot \chi_{[\jjj]}(\iii)
  \end{equation*}
  for all $(\mu,\iii) \in \PP_\Gamma$. Observe that $(f_n)_{n \in \N}$ is an increasing sequence of continuous functions such that $f_n \to f$ as $n \to \infty$. Moreover, each $f_n$ is defined to be a sum of products, where each product is between functions in $C(\PP(\Gamma))$ and $C(\Gamma)$. Therefore, the proof follows by applying \eqref{eq:adapted2} and the monotone convergence theorem.
\end{proof}

\begin{lemma} \label{thm:convex_compact}
  The family of adapted distributions on $\PP_\Gamma$ is convex and compact with respect to the weak topology.
\end{lemma}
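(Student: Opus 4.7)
Convexity is immediate: given adapted distributions $Q_1, Q_2$ with auxiliary distributions $\overline{Q_1}, \overline{Q_2}$ on $\PP(\Gamma)$, the combination $tQ_1+(1-t)Q_2$ is adapted via $t\overline{Q_1}+(1-t)\overline{Q_2}$ by linearity of \eqref{eq:adapted}, so no work is required for this half.

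For compactness, my plan is to realize the set of adapted distributions as a closed subset of the compact space $\PP(\PP_\Gamma)$. Since $\Gamma$ is compact, $\PP(\Gamma)$ is compact in the weak topology, and hence so are $\PP_\Gamma = \PP(\Gamma)\times\Gamma$ and $\PP(\PP_\Gamma)$. Given a weakly convergent sequence $Q_n \to Q$ of adapted distributions with witnesses $\overline{Q_n} \in \PP(\PP(\Gamma))$, I will pass to a subsequence along which $\overline{Q_n} \to \overline{Q}$ weakly for some $\overline{Q} \in \PP(\PP(\Gamma))$, using compactness of $\PP(\PP(\Gamma))$, and then verify that this limit $\overline{Q}$ witnesses the adaptedness of $Q$.

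The verification will route through Lemma \ref{thm:adapted2}. Fix $f \in C(\PP(\Gamma))$ and $g \in C(\Gamma)$. The left-hand side of \eqref{eq:adapted2} for $Q_n$ converges to that for $Q$ because $(\mu,\iii)\mapsto f(\mu)g(\iii)$ is continuous on $\PP_\Gamma$ and $Q_n\to Q$ weakly. The map $\mu\mapsto f(\mu)\int_\Gamma g\,\dd\mu$ is continuous on $\PP(\Gamma)$ by the defining property of the weak topology, so the right-hand side of \eqref{eq:adapted2} for $\overline{Q_n}$ converges to the corresponding expression for $\overline{Q}$. Passing to the limit in \eqref{eq:adapted2} written for each $Q_n$ then delivers the identity for $Q$ and $\overline{Q}$, and Lemma \ref{thm:adapted2} concludes that $Q$ is adapted.

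The only subtle point, which is more bookkeeping than a genuine obstacle, is why one must route through Lemma \ref{thm:adapted2} rather than work from \eqref{eq:adapted} directly. Weak convergence $\overline{Q_n}\to\overline{Q}$ only transfers integrals of functions continuous on $\PP(\Gamma)$, and for a generic $f \in C(\PP_\Gamma)$ the map $\mu\mapsto\int f(\mu,\iii)\,\dd\mu(\iii)$ need not be continuous, whereas on product integrands $f(\mu)g(\iii)$ it is. Product integrands are thus the natural and essentially forced class to work with in the compactness argument, and once this is noted the proof runs on autopilot.
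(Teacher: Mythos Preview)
Your proof is correct and follows essentially the same route as the paper's: both reduce compactness to closedness in the compact space $\PP(\PP_\Gamma)$, pass to a weak limit of the witnesses $\overline{Q_n}$, and invoke Lemma~\ref{thm:adapted2} together with the continuity of $\mu\mapsto\int_\Gamma g\,\dd\mu$ to verify that the limit $\overline Q$ witnesses adaptedness of $Q$.

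One small correction to your closing commentary: the map $\mu\mapsto\int_\Gamma f(\mu,\iii)\,\dd\mu(\iii)$ \emph{is} continuous on $\PP(\Gamma)$ for every $f\in C(\PP_\Gamma)$, because $f$ is uniformly continuous on the compact product $\PP(\Gamma)\times\Gamma$, so $\sup_\iii|f(\mu_n,\iii)-f(\mu,\iii)|\to 0$ whenever $\mu_n\to\mu$. Thus the detour through Lemma~\ref{thm:adapted2} is a convenience rather than a necessity; this does not affect the validity of your argument.
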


\begin{proof}
  It is straightforward to see that the family of adapted distributions on $\PP_\Gamma$ is convex. To show the compactness, rely on Lemma \ref{thm:adapted2} and observe that the function $\mu \mapsto \int_\Gamma g(\iii) \dd\mu(\iii)$ defined on $\PP(\Gamma)$ is continuous.
\end{proof}

\subsection{CP-distributions}
Let $\Gamma \subset \Sigma$ be a compact set satisfying $\sigma(\Gamma) \subset \Gamma$. If $\mu \in \PP(\Gamma)$, then for each $\iii \in \Sigma_*$ we define a measure $\mu_\iii \in \PP(\Gamma)$ by setting
\begin{equation*}
  \mu_\iii([\jjj]) =
  \begin{cases}
    \mu([\iii\jjj])/\mu([\iii]), &\text{if } \iii\jjj \in \Gamma_* \text{ and } \mu([\iii])>0, \\
    0, &\text{otherwise},
  \end{cases}
\end{equation*}
for all $\jjj \in \Sigma_*$. We define a mapping $M \colon \PP_\Gamma \to \PP_\Gamma$ by setting $M(\mu,\iii) = (\mu_{\iii|_1},\sigma(\iii))$ for all $(\mu,\iii) \in \PP_\Gamma$. Observe that if $\iii\jjj\hhh \in \Gamma_*$ and $\mu([\iii\jjj])>0$, then
\begin{equation*}
  (\mu_\iii)_\jjj([\hhh]) = \frac{\mu_\iii([\jjj\hhh])}{\mu_\iii([\jjj])} = \frac{\mu([\iii\jjj\hhh])}{\mu([\iii\jjj])} = \mu_{\iii\jjj}([\hhh]).
\end{equation*}
Thus $M^k(\mu,\iii) = (\mu_{\iii|_k},\sigma^k(\iii))$ for all $(\mu,\iii) \in \PP_\Gamma$ and $k \in \N$. Furthermore, if $\mu([\iii|_n]) > 0$ for all $n \in \N$, then also $\mu_{\iii|_1}([\sigma(\iii)|_n]) > 0$ for all $n \in \N$. Hence $M(\Omega_\Gamma) \subset \Omega_\Gamma$ where $\Omega_\Gamma$ is as in \eqref{eq:omegagamma}. It follows that $M$ restricted to $\Omega_\Gamma$ is continuous.

The following lemma can be gleaned from \cite{Furstenberg2008}. Nevertheless, we present a full proof for it since we have not been able to find it anywhere in the literature.

\begin{lemma} \label{thm:invariant_adapted}
  If $Q$ is an adapted distribution on $\PP_\Gamma$, then $MQ$ is an adapted distribution on $\PP_\Gamma$.
\end{lemma}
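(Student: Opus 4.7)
My plan is to construct the distribution $\overline{MQ}$ on $\PP(\Gamma)$ witnessing adaptedness of $MQ$, and then verify the defining identity by unwinding $M$ and using the adaptedness of $Q$. The natural candidate is obtained by running the first step of the construction ``by hand'': for $\mu \in \PP(\Gamma)$, the conditional $\mu_i$ is the appropriate second-step measure weighted by $\mu([i])$, so I would set
\begin{equation*}
  \int_{\PP(\Gamma)} h(\nu) \dd\overline{MQ}(\nu) = \int_{\PP(\Gamma)} \sum_{\substack{i \in \Gamma_1 \\ \mu([i])>0}} \mu([i]) h(\mu_i) \dd\overline{Q}(\mu)
\end{equation*}
for $h \in C(\PP(\Gamma))$. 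This defines a Borel probability measure on $\PP(\Gamma)$: the integrand is a finite sum of measurable expressions since $\mu \mapsto \mu([i])$ is continuous, the map $\mu \mapsto \mu_i$ is Borel on $\{ \mu([i])>0 \}$, and total mass is preserved because $\sum_{i \in \Gamma_1} \mu([i]) = 1$ for every $\mu \in \PP(\Gamma)$.

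Next I would verify the adaptedness identity for $MQ$ with this $\overline{MQ}$. Fix $f \in C(\PP_\Gamma)$ and compute $\int f \dd(MQ) = \int f \circ M \dd Q$ by the change-of-variables formula for push-forward. Using the adaptedness of $Q$,
\begin{equation*}
  \int_{\PP_\Gamma} f(M(\mu,\iii)) \dd Q(\mu,\iii) = \int_{\PP(\Gamma)} \int_\Gamma f(\mu_{\iii|_1}, \sigma(\iii)) \dd\mu(\iii) \dd\overline{Q}(\mu).
\end{equation*}
I would then split the inner integral over the finitely many cylinders $[i]$ with $i \in \Gamma_1$. On each $[i]$ with $\mu([i])>0$, the standard disintegration $\mu|_{[i]} = \mu([i]) \cdot (\sigma|_{[i]})^{-1}\mu_i$ (i.e.\ pushing $\mu|_{[i]}$ forward by $\sigma$ gives $\mu([i]) \mu_i$) turns the integral into $\mu([i]) \int_\Gamma f(\mu_i, \jjj) \dd\mu_i(\jjj)$. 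The contribution from $\mu([i]) = 0$ is null. Summing over $i$ and integrating against $\overline{Q}$ produces exactly $\int \int f(\nu,\jjj) \dd\nu(\jjj) \dd\overline{MQ}(\nu)$, which is the adaptedness identity for $MQ$.

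The only step requiring a little care is the measurability and push-forward computation in the second paragraph, since $\mu \mapsto \mu_i$ is only defined where $\mu([i])>0$. Here Lemma \ref{thm:adapted_omega} is helpful: $Q(\Omega_\Gamma) = 1$, so for $\overline{Q}$-almost every $\mu$ the support property kills the ambiguity on the exceptional set, and the adaptedness identity for $Q$ only sees those $(\mu,\iii)$ with $\mu([\iii|_n])>0$ for all $n$. If one prefers, the identity can instead be verified on product functions $f(\mu)g(\iii)$ using Lemma \ref{thm:adapted2}, which reduces the verification to the elementary disintegration above and avoids approximating a general $f \in C(\PP_\Gamma)$. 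I expect this bookkeeping over $\Gamma_1$ and the careful handling of the zero-mass cylinders to be the only real obstacle; once it is in place the identity is a one-line rearrangement.
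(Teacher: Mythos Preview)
Your proposal is correct and follows essentially the same route as the paper. Your candidate $\overline{MQ}$ is precisely the paper's $T^*\overline{Q}$, where $T$ is the transfer operator $Th(\mu)=\sum_{i\in\Gamma_1}\mu([i])h(\mu_i)$, and the core computation---splitting the inner integral over first-level cylinders and using $(\sigma|_{[i]})_*\mu|_{[i]}=\mu([i])\,\mu_i$---is exactly equation \eqref{eq:lasku} in the paper; the only cosmetic difference is that the paper verifies the identity on product functions $f(\mu)g(\iii)$ via Lemma~\ref{thm:adapted2} (with a cylinder-step approximation of $g$), whereas you work with a general $f\in C(\PP_\Gamma)$ and invoke Lemma~\ref{thm:adapted_def} to handle the fact that $f\circ M$ is merely bounded measurable, an alternative you already note.
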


\begin{proof}
  Fix $f \in C(\PP(\Gamma))$ and $g \in C(\Gamma)$. For each $n \in \N$ define a function $g_n \colon \Gamma \to \R$ by setting
  \begin{equation*}
    g_n(\iii) = \sum_{\jjj \in \Gamma_n} \min\{ g(\hhh) : \hhh \in [\jjj] \} \cdot \chi_{[\jjj]}
  \end{equation*}
  for all $\iii \in \Gamma$. Observe that $(g_n)_{n \in \N}$ is an increasing sequence of continuous functions such that $g_n \to g$ as $n \to \infty$. Thus, if we are able to show that there exists a distribution $\hat Q$ on $\PP(\Gamma)$ such that
  \begin{equation*}
    \int_{\PP_\Gamma} f(\mu) g_n(\iii) \dd MQ(\mu,\iii) = \int_{\PP(\Gamma)} f(\mu) \int_\Gamma g_n(\iii) \dd\mu(\iii) \dd\hat Q(\mu)
  \end{equation*}
  for all $n \in \N$, then the claim follows from the monotone convergence theorem and Lemma \ref{thm:adapted2}.
  
  Fix $n \in \N$ and define a linear operator $T \colon C(\PP(\Gamma)) \to C(\PP(\Gamma))$ by setting
  \begin{equation*}
    Th(\mu) = \sum_{i \in \Gamma_1} \mu([i]) h(\mu_i)
  \end{equation*}
  for all $h \in C(\PP(\Gamma))$. By the Riesz representation theorem, 
  there exists an adjoint operator $T^* \colon \PP(\PP(\Gamma)) \to \PP(\PP(\Gamma))$ such that
  \begin{equation} \label{eq:adjoint}
    \int_{\PP(\Gamma)} Th(\mu) \dd\overline{Q}(\mu) = \int_{\PP(\Gamma)} h(\mu) \dd T^*\overline{Q}(\mu)
  \end{equation}
  for all $h \in C(\PP(\Gamma))$ and $\overline{Q} \in \PP(\PP(\Gamma))$.
  Let $h \in C(\PP(\Gamma))$ be so that
  \begin{equation*}
    h(\mu) = f(\mu) \int_\Gamma g_n(\iii) \dd\mu(\iii)
  \end{equation*}
  for all $\mu \in \PP(\Gamma)$. Recalling that $Q$ is adapted, we choose $\overline{Q} \in \PP(\PP(\Gamma))$ so that it satisfies \eqref{eq:adapted}. Since each $\mu \in \PP(\Gamma)$ is a measure on $\Sigma$ with $\spt(\mu) \subset \Gamma$ we get
  \begin{equation} \label{eq:lasku}
  \begin{split}
    Th(\mu) &= \sum_{i \in \Gamma_1} \mu([i]) h(\mu_i)
    = \sum_{i \in \Gamma_1} \mu([i]) f(\mu_i) \sum_{\jjj \in \Gamma_n} \min\{ g(\hhh) : \hhh \in [\jjj] \} \cdot \mu_i([\jjj]) \\
    &= \sum_{i \in \Gamma_1} f(\mu_i) \sum_{\jjj \in \Gamma_n} \min\{ g(\hhh) : \hhh \in [\jjj] \} \cdot \mu([i\jjj])
    = \sum_{i \in \Gamma_1} f(\mu_i) \int_{[i]} g_n(\sigma(\iii)) \dd\mu(\iii) \\
    &= \int_\Gamma f(\mu_{\iii|_1}) g(\sigma(\iii)) \dd\mu(\iii)
  \end{split}
  \end{equation}
  for all $\mu \in \PP(\Gamma)$.
  Observe that since $\mu_n \to \mu$ weakly if and only if $\mu_n([\iii]) \to \mu([\iii])$ for all $\iii \in \Gamma_*$ the function $\mu \mapsto f(\mu_i)$ is continuous whenever $\mu([i])>0$. Thus the bounded function $(\mu,\iii) \mapsto f(\mu_{\iii|_1}) g(\sigma(\iii))$ defined on $\PP_\Gamma$ is measurable since its restriction to $\Omega_\Gamma$ is continuous and $\Omega_\Gamma$ has full measure by Lemma \ref{thm:adapted_omega}. Now, by Lemma \ref{thm:adapted_def}, \eqref{eq:lasku}, and \eqref{eq:adjoint}, we get
  \begin{align*}
    \int_{\PP_\Gamma} f(\mu) g_n(\iii) \dd MQ(\mu,\iii) &= \int_{\PP_\Gamma} f(\mu_{\iii|_1}) g(\sigma(\iii)) \dd Q(\mu,\iii) \\
    &= \int_{\PP(\Gamma)} \int_\Gamma f(\mu_{\iii|_1}) g(\sigma(\iii)) \dd\mu(\iii) \dd\overline{Q}(\mu) \\
    &= \int_{\PP(\Gamma)} Th(\mu) \dd\overline{Q}(\mu)
    = \int_{\PP(\Gamma)} h(\mu) \dd T^*\overline{Q}(\mu) \\
    &= \int_{\PP(\Gamma)} f(\mu) \int_\Gamma g_n(\iii) \dd\mu(\iii) \dd T^*\overline{Q}(\mu).
  \end{align*}
  This is what we wanted to show.
\end{proof}

If $Q$ is a distribution on $\PP_\Gamma$, then we say that $A \subset \PP_\Gamma$ is an \emph{invariant set} if $Q(M^{-1}(A) \triangle A) = 0$. It follows from Lemmas \ref{thm:invariant_adapted} and \ref{thm:adapted_omega} that $\Omega_\Gamma$ is an invariant set whenever $Q$ is adapted. Furthermore, we say that $Q$ is \emph{invariant} if $MQ=Q$ and \emph{ergodic} if $Q(A) \in \{ 0,1 \}$ for all invariant sets $A \subset \PP_\Gamma$. An invariant and adapted distribution $Q$ on $\PP_\Gamma$ is called a \emph{CP-distribution}.

If $\mu \in \PP(\Gamma)$ and $\iii \in \Gamma$, then we define
\begin{equation*}
  \la \mu,\iii \ra_n = \tfrac{1}{n} \sum_{k=0}^{n-1} \delta_{M^k(\mu,\iii)}
\end{equation*}
for all $n \in \N$. We say that a measure $\mu$ on $\Gamma$ \emph{generates} a distribution $Q$ on $\PP_\Gamma$ if $\lim_{n \to \infty} \la \mu,\iii \ra_n = Q$ for $\mu$-almost all $\iii \in \Gamma$. It is shown in \cite[Proposition 5.4]{Hochman2010} that generated distributions are adapted. This observation is a crucial ingredient in the following lemma.

\begin{lemma}
  If $Q$ is a CP-distribution on $\PP_\Gamma$, then its ergodic components are $CP$-distributions.
\end{lemma}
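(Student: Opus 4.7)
The plan is to apply the standard ergodic decomposition theorem to the measure-preserving system $(\PP_\Gamma,M,Q)$, obtaining a representation $Q=\int Q_\omega\dd\lambda(\omega)$ in which each component $Q_\omega$ is $M$-invariant and $M$-ergodic. The $M$-invariance is automatic, so what must be shown is that $\lambda$-almost every $Q_\omega$ is adapted. By Lemma~\ref{thm:adapted2}, this amounts to verifying
\begin{equation*}
  \int f(\mu)g(\iii)\dd Q_\omega(\mu,\iii)=\int f(\mu)\int g\dd\mu\dd\overline{Q_\omega}(\mu)
\end{equation*}
for every $f\in C(\PP(\Gamma))$ and $g\in C(\Gamma)$.

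Fix such $f,g$ and apply Birkhoff's ergodic theorem on $(\PP_\Gamma,M,Q_\omega)$ to the bounded continuous functions $h_1(\mu,\iii)=f(\mu)g(\iii)$ and $h_2(\mu,\iii)=f(\mu)\int g\dd\mu$. For $Q_\omega$-almost every $(\mu,\iii)$ the Birkhoff averages of $h_1$ and $h_2$ converge to $\int h_1\dd Q_\omega$ and $\int h_2\dd Q_\omega$ respectively, which are exactly the two sides of the identity above. Integrating the $Q_\omega$-a.e.\ conclusion against $\lambda$ and invoking the adaptedness of $Q$ to rewrite $Q$-almost every $(\mu,\iii)$ as $\overline{Q}$-a.e.\ $\mu$ followed by $\mu$-a.e.\ $\iii$, it suffices to show that
\begin{equation*}
  \frac{1}{n}\sum_{k=0}^{n-1} f(\mu_{\iii|_k})\Bigl[g(\sigma^k\iii)-\int g\dd\mu_{\iii|_k}\Bigr]\to 0
\end{equation*}
for $\overline{Q}$-a.e.\ $\mu$ and $\mu$-a.e.\ $\iii$.

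For this last step I would first take $g=\chi_{[\jjj]}$ with $\jjj\in\Sigma_*$. The defining identity $\mu(\{\iii:\sigma^k\iii\in[\jjj]\}\mid\iii|_k)=\mu_{\iii|_k}([\jjj])$ makes the summands bounded random variables with vanishing conditional expectation with respect to $\sigma(\iii|_k)$, and sub-sampling modulo $|\jjj|$ converts them into genuine martingale-difference sequences, so the martingale strong law delivers $\mu$-a.s.\ convergence to $0$. Uniform approximation of $g\in C(\Gamma)$ by finite linear combinations of cylinder indicators extends the conclusion to all continuous $g$, and a countable-dense-set argument in $C(\PP(\Gamma))\times C(\Gamma)$ produces a single $\lambda$-full-measure set of $\omega$ on which the adaptedness identity of Lemma~\ref{thm:adapted2} holds for every test pair. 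The main obstacle I anticipate is the martingale step: since $g(\sigma^k\iii)$ is not $\sigma(\iii|_k)$-measurable, the summands are not directly martingale differences, so the reduction to cylinder indicators must be carried out with quantitative control that survives the density argument.
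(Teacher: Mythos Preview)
Your approach is correct in outline and the martingale step you worry about does go through: for $g=\chi_{[\jjj]}$ the summands $X_k=f(\mu_{\iii|_k})\bigl[\chi_{[\jjj]}(\sigma^k\iii)-\mu_{\iii|_k}([\jjj])\bigr]$ are bounded, $\mathcal F_{k+|\jjj|}$-measurable, and satisfy $E_\mu[X_k\mid\mathcal F_k]=0$, so sub-sampling modulo $|\jjj|$ yields genuine bounded martingale-difference sequences to which the strong law applies; uniform approximation of $g$ by step functions and separability of $C(\PP(\Gamma))$ then give a single $\mu$-full-measure set on which the averages vanish for all test pairs. Combining this $Q$-a.e.\ statement (via adaptedness of $Q$) with the Birkhoff limits on each $Q_\omega$ gives $\int h_1\dd Q_\omega=\int h_2\dd Q_\omega$ for $\lambda$-a.e.\ $\omega$, as you intend.

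The paper takes a different and much shorter route. It does not verify the adaptedness identity directly; instead it observes that, by Birkhoff's theorem, an ergodic invariant distribution $Q_\omega$ satisfies $\la\mu,\iii\ra_n\to Q_\omega$ for $Q_\omega$-almost every $(\mu,\iii)$, i.e.\ almost every $\mu$ \emph{generates} $Q_\omega$ in the sense defined just before the lemma, and then invokes \cite[Proposition~5.4]{Hochman2010}, which asserts that any distribution generated by a measure is adapted. So the whole martingale computation you set up is precisely the content of Hochman's proposition, quoted as a black box. What your approach buys is self-containment: you avoid the external reference and make explicit the probabilistic mechanism (conditional-mean-zero increments) behind adaptedness of scenery limits. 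What the paper's approach buys is brevity and conceptual clarity---once one knows that ``generated $\Rightarrow$ adapted'', the lemma is a two-line consequence of the ergodic theorem, and the same citation is reused elsewhere in the literature on CP-processes.
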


\begin{proof}
  By the ergodic decomposition, each ergodic component $Q_\omega$ is invariant. Thus it suffices to show that an ergodic and invariant distribution is adapted. If $Q$ is such a distribution on $\PP_\Gamma$ and $f$ is a continuous function defined on $\PP_\Gamma$, then, by the Birkhoff ergodic theorem, we have
  \begin{equation*}
    \lim_{n \to \infty} \int_{\PP_\Gamma} f(\nu,\jjj) \dd \la \mu,\iii \ra_n(\nu,\jjj) = \lim_{n \to \infty} \tfrac{1}{n} \sum_{k=0}^{n-1} f(M^k(\mu,\iii)) = \int_{\PP_\Gamma} f(\nu,\jjj) \dd Q(\nu,\jjj)
  \end{equation*}
  for $Q$-almost all $(\mu,\iii)$. Since this holds simultaneously for any countable collection of continuous functions and $C(\PP_\Gamma)$ is separable in the uniform norm, it holds simultaneously for all continuous $f$. Therefore $\overline{Q}$-almost every $\mu$ generates $Q$ and, by \cite[Proposition 5.4]{Hochman2010}, $Q$ is adapted.
\end{proof}

\subsection{CP-distributions and entropy}
We define an \emph{information function} $I \colon \PP_\Gamma \to \R$ by setting $I(\mu,\iii) = -\log\mu([\iii|_1])$ for all $(\mu,\iii) \in \Omega_\Gamma$ and $I(\mu,\iii) = 0$ for all $(\mu,\iii) \in \PP_\Gamma \setminus \Omega_\Gamma$. It is evident that $I$ is continuous on $\Omega_\Gamma$.
Furthermore, we define an \emph{entropy} of a measure $\mu \in \PP(\Gamma)$ by setting
\begin{equation*}
  h(\mu) = -\lim_{n \to \infty} \tfrac{1}{n} \log \mu([\iii|_n])
\end{equation*}
whenever the limit exists and is $\mu$-almost everywhere constant. By the Shannon-McMillan-Breiman Theorem, this is the case for all $\sigma$-invariant ergodic measures in $\PP(\Gamma)$. The next observation follows from the Birkhoff ergodic theorem; consult the proof of \cite[Theorem 2.1]{Furstenberg2008}.

\begin{lemma} \label{thm:ergodic-entropy}
  If $Q$ is an ergodic CP-distribution on $\PP_\Gamma$, then
  \begin{equation*}
    h(\mu) = \int_{\PP_\Gamma} I(\nu,\jjj) \dd Q(\nu,\jjj)
  \end{equation*}
  for $\overline{Q}$-almost all $\mu$.
\end{lemma}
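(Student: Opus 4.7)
The plan is to apply the Birkhoff ergodic theorem to the measure-preserving system $(\PP_\Gamma, Q, M)$ and the information function $I$, and then use the adaptedness of $Q$ together with a telescoping identity to relate the Birkhoff average of $I$ to the entropy $h(\mu)$.

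First, I would compute $I \circ M^k$ explicitly. Since $M^k(\mu,\iii) = (\mu_{\iii|_k}, \sigma^k(\iii))$ and $\sigma^k(\iii)|_1 = i_{k+1}$, we have
\begin{equation*}
  I(M^k(\mu,\iii)) = -\log \mu_{\iii|_k}([i_{k+1}]) = -\log\frac{\mu([\iii|_{k+1}])}{\mu([\iii|_k])}
\end{equation*}
on $\Omega_\Gamma$, which is where $Q$ is concentrated by Lemma \ref{thm:adapted_omega}. Summing over $k=0,\ldots,n-1$ produces a telescoping cancellation, and using $\mu([\iii|_0]) = \mu(\Gamma) = 1$, one obtains
\begin{equation*}
  \sum_{k=0}^{n-1} I(M^k(\mu,\iii)) = -\log \mu([\iii|_n]).
\end{equation*}

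Next, I would apply the Birkhoff ergodic theorem to the non-negative function $I$ under the ergodic $M$-invariant measure $Q$. This gives
\begin{equation*}
  \lim_{n \to \infty} -\tfrac{1}{n} \log\mu([\iii|_n]) = \lim_{n \to \infty} \tfrac{1}{n} \sum_{k=0}^{n-1} I(M^k(\mu,\iii)) = \int_{\PP_\Gamma} I(\nu,\jjj) \dd Q(\nu,\jjj)
\end{equation*}
for $Q$-almost every $(\mu,\iii) \in \PP_\Gamma$ (the identity holds in $[0,\infty]$, so finiteness of the integral is not needed).

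Finally, I would translate this $Q$-almost sure statement into a $\overline{Q}$-almost sure statement about $\mu$ alone. By adaptedness (Lemma \ref{thm:adapted_def} applied to the indicator of the set where the limit exists and equals $\int I \dd Q$), the $Q$-full-measure set can be written as
\begin{equation*}
  \int_{\PP(\Gamma)} \mu(\{ \iii \in \Gamma : \tfrac{1}{n}\log\mu([\iii|_n]) \to -\textstyle\int I \dd Q \}) \dd \overline{Q}(\mu) = 1.
\end{equation*}
Hence for $\overline{Q}$-almost every $\mu$, the limit $-\tfrac{1}{n}\log\mu([\iii|_n])$ exists for $\mu$-almost every $\iii$ and equals the constant $\int I \dd Q$, which by the definition of $h(\mu)$ is precisely the claim.

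No serious obstacle is anticipated: the telescoping identity is immediate, Birkhoff applies directly since $I \ge 0$, and the passage from $Q$-a.s.\ to $\overline{Q}$-a.s.\ is a routine application of Fubini via adaptedness. The only point that requires a modicum of care is ensuring that the computation of $I(M^k(\mu,\iii))$ is valid, which is handled by restricting to $\Omega_\Gamma$ and invoking Lemma \ref{thm:adapted_omega}.
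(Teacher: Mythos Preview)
Your proposal is correct and follows exactly the approach the paper indicates: the paper does not give a detailed proof but simply says the result follows from the Birkhoff ergodic theorem and refers to Furstenberg's argument, and your telescoping computation $\sum_{k=0}^{n-1} I(M^k(\mu,\iii)) = -\log\mu([\iii|_n])$ is precisely the identity that appears (in a slightly different context) in the paper's proof of Proposition~\ref{thm:exists-invariant}. Your treatment of the passage from $Q$-a.s.\ to $\overline{Q}$-a.s.\ via adaptedness fills in a detail the paper leaves implicit.
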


Although the following proposition is essentially \cite[Proposition 5.2]{Furstenberg2008}, we present its proof in detail since it nicely binds together the crucial ideas. Define
\begin{equation*}
  \Omega_\Gamma(A) = \{ (\mu,\iii) \in \Omega_\Gamma : \spt(\mu) \text{ is contained in a microset of } A \}
\end{equation*}
for all compact $A \subset \Gamma$.

\begin{proposition} \label{thm:exists-invariant}
  If $A \subset \Gamma$ is compact, then there exists a CP-distribution $Q$ on $\PP_\Gamma$ such that $Q(\Omega_\Gamma(A)) = 1$ and
  \begin{equation*}
    \int_{\PP_\Gamma} I(\nu,\jjj) \dd Q(\nu,\jjj) = \lim_{n \to \infty} \tfrac{1}{n} \log N_n(A).
  \end{equation*}
\end{proposition}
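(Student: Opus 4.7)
The plan is to construct $Q$ as a weak subsequential limit of Ces\`aro averages of adapted distributions that are evenly distributed on microsets realizing the maximum in the definition of $N_n(A)$. For each $n \in \N$ pick a microset $A_n'$ of $A$ with $N_n(A) = \#\{\iii \in \Gamma_n : A_n' \cap [\iii] \ne \emptyset\}$, choose a point $\hhh_\iii \in A_n' \cap [\iii]$ for each such $\iii$, and set
\begin{equation*}
  \mu_n = \frac{1}{N_n(A)}\sum_{\iii} \delta_{\hhh_\iii}, \qquad \int f \dd Q_n^0 = \int_\Gamma f(\mu_n, \iii) \dd\mu_n(\iii) \text{ for } f \in C(\PP_\Gamma),
\end{equation*}
so that $Q_n^0$ is adapted, $\spt(\mu_n) \subset A_n'$, and $\mu_n([\iii]) = 1/N_n(A)$ on each level-$n$ cylinder meeting $A_n'$. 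Define
\begin{equation*}
  Q_n = \tfrac{1}{n}\sum_{k=0}^{n-1} M^k Q_n^0,
\end{equation*}
which is adapted by Lemma \ref{thm:invariant_adapted} and convexity. Since $M^k(\mu,\iii) = (\mu_{\iii|_k}, \sigma^k(\iii))$ and $\spt(\mu_{n,\iii|_k}) \subset \beta_{\iii|_k}(A_n')$ is a microset of $A$ by Lemma \ref{thm:micromicro}, each $Q_n$ is supported on
\begin{equation*}
  F = \{(\mu,\iii) \in \PP_\Gamma : \spt(\mu) \subset A' \text{ for some microset } A' \text{ of } A\},
\end{equation*}
which is closed: if $\mu_k \to \mu$ weakly with $\spt(\mu_k) \subset A_k'$ and (along a subsequence, by compactness of the hyperspace) $A_k' \to A'$ in the Hausdorff metric, then the Portmanteau theorem gives $\spt(\mu) \subset \liminf_k \spt(\mu_k) \subset A'$, and microsets form a Hausdorff-closed class.

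By Lemma \ref{thm:convex_compact}, extract a subsequence $Q_{n_j} \to Q$ weakly with $Q$ adapted. Portmanteau applied to the closed set $F$ yields $Q(F) \ge \limsup_j Q_{n_j}(F) = 1$, and Lemma \ref{thm:adapted_omega} then gives $Q(\Omega_\Gamma(A)) = 1$. For invariance, $MQ_n - Q_n = \frac{1}{n}(M^n Q_n^0 - Q_n^0)$ has total variation at most $2/n$, while the continuous mapping theorem (with $M$ continuous on $\Omega_\Gamma$, which has full $Q$-measure) gives $MQ_{n_j} \to MQ$. Hence $MQ = Q$ and $Q$ is a CP-distribution.

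The telescoping identity $\sum_{k=0}^{n-1} I(M^k(\mu,\iii)) = -\log\mu([\iii|_n])$, combined with $\mu_n([\iii|_n]) = 1/N_n(A)$ for $\mu_n$-a.e.\ $\iii$, gives $\int I \dd Q_n = \frac{1}{n}\log N_n(A)$ \emph{exactly}. To pass to the limit, observe that adaptedness of any $Q'$ with marginal $\overline{Q'}$ on $\PP(\Gamma)$ reduces the integral of $I$ to a one-step entropy functional:
\begin{equation*}
  \int I \dd Q' = \int_{\PP(\Gamma)} H_1(\mu) \dd\overline{Q'}(\mu), \qquad H_1(\mu) = -\sum_{i \in \Sigma_1} \mu([i])\log \mu([i]).
\end{equation*}
Since $\Sigma_1$ is finite and each $\mu \mapsto \mu([i])$ is continuous, $H_1$ is continuous and bounded on $\PP(\Gamma)$. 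The continuous projection $\PP_\Gamma \to \PP(\Gamma)$ transports weak convergence to weak convergence of marginals, so $\overline{Q}_{n_j} \to \overline{Q}$ weakly and $\int H_1 \dd\overline{Q}_{n_j} \to \int H_1 \dd\overline{Q}$. Therefore
\begin{equation*}
  \int I \dd Q = \lim_{j \to \infty} \tfrac{1}{n_j} \log N_{n_j}(A) = \lim_{n \to \infty} \tfrac{1}{n}\log N_n(A),
\end{equation*}
the last equality holding because the limit exists by sub-multiplicativity (Lemma \ref{thm:submulti}).

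The main obstacle is the convergence of the $I$-integrals: $I$ is only continuous on the non-closed set $\Omega_\Gamma$ and is unbounded. The resolution exploits adaptedness decisively, reducing $\int I\, dQ'$ to the integral of the bounded continuous observable $H_1$ on $\PP(\Gamma)$, which behaves well under weak convergence.
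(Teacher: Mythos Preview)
Your argument follows the paper's construction exactly: measures $\mu_n$ uniform on the level-$n$ cylinders of a maximizing microset, the adapted distributions $P_n=\delta_{\mu_n}\times\mu_n$ (your $Q_n^0$), their Ces\`aro averages $Q_n$, and a weak accumulation point $Q$; the verification that $Q$ is adapted, invariant, and concentrated on $\Omega_\Gamma(A)$, as well as the telescoping identity $\int I\,dQ_n=\tfrac1n\log N_n(A)$, are the same as in the paper.

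The one substantive difference is how you pass $\int I\,dQ_n$ to the limit. The paper appeals to \cite[Theorem~2.7]{Billingsley1999} together with the continuity of $I$ on $\Omega_\Gamma$; since $I$ is unbounded this step deserves some care. Your route is cleaner: using adaptedness to rewrite $\int I\,dQ'=\int_{\PP(\Gamma)}H_1(\mu)\,d\overline{Q'}(\mu)$ with $H_1(\mu)=-\sum_{i\in\Sigma_1}\mu([i])\log\mu([i])$ bounded and continuous, and then invoking weak convergence of the first marginals $\overline{Q}_{n_j}\to\overline{Q}$, avoids the unboundedness issue entirely.

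One minor slip: the inclusion $\spt(\mu)\subset\liminf_k\spt(\mu_k)$ is false in general (take $\mu_k=\delta_{1/k}\to\delta_0$). What Portmanteau does give is $\mu(\overline{U_\eps(A')})\ge\limsup_l\mu_{k_l}(\overline{U_\eps(A')})=1$ for every $\eps>0$, hence $\spt(\mu)\subset A'$; the conclusion that $F$ is closed stands.
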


\begin{proof}
  For each $n \in \N$ let $A_n'$ be a microset of $A$ such that $N_n(A) = \#\{ \iii \in \Gamma_n : A_n' \cap [\iii] \ne \emptyset \}$. Let $\mu_n \in \PP(\Gamma)$ be a measure supported on $A_n'$ such that $\mu_n([\iii]) = N_n(A)^{-1}$ for all $\iii \in \Gamma_n$ with $A_n' \cap [\iii] \ne \emptyset$. Define $P_n = \delta_{\mu_n} \times \mu_n$ for all $n \in \N$. Since
  \begin{equation} \label{eq:Pn_adapted}
    \int_{\PP_\Gamma} f(\nu,\iii) \dd P_n(\nu,\iii) = \int_{\PP(\Gamma)} \int_\Gamma f(\nu,\iii) \dd\nu(\iii) \dd\delta_{\mu_n}(\nu)
  \end{equation}
  for all $f \in C(\PP_\Gamma)$ the distribution $P_n$ is clearly adapted. Furthermore, let $Q_n = \tfrac{1}{n} \sum_{k=0}^{n-1} M^k P_n$ for all $n \in \N$ and choose $Q$ to be an accumulation point of $\{ Q_n \}_{n \in \N}$. Observe that $Q$ is invariant. It follows from Lemmas \ref{thm:invariant_adapted} and \ref{thm:convex_compact} that $Q$ is adapted. Thus $Q$ is a CP-distribution.
  
  Let us prove that $Q(\Omega_\Gamma(A)) = 1$. We will first show that $Q_n(\Omega_\Gamma(A)) = 1$ for all $n \in \N$. Then, by showing that $\Omega_\Gamma(A)$ is closed, the claim follows from the weak convergence since $Q(\Omega_\Gamma(A)) \ge \lim_{n \to \infty} Q_n(\Omega_\Gamma(A)) = 1$.

  To show that $Q_n(\Omega_\Gamma(A))=1$ for all $n \in \N$ it suffices to prove that for fixed $k \in \{ 0,\ldots,n-1 \}$ it holds $M^kP_n(\Omega_\Gamma(A))=1$. Recalling the definition of $P_n$, this is certainly the case if $\{ \mu_n \} \times A_n' \subset M^{-k}(\Omega_\Gamma(A))$. Fix $\iii \in A_n'$ and note that the measure $(\mu_n)_{\iii|_k}$ is supported on the miniset $\beta_{\iii|_k}(A_n')$. Since, by Lemma \ref{thm:micromicro}, $\beta_{\iii|_k}(A_n')$ is a microset of $A$ we have $M^k(\mu_n,\iii) = ((\mu_n)_{\iii|_k},\sigma^k(\iii)) \in \Omega_\Gamma(A)$. This is what we wanted.

  To show that $\Omega_\Gamma(A)$ is closed take a sequence $(\nu_j,\iii_j)_{j=1}^\infty$ of points in $\Omega_\Gamma(A)$ converging to some $(\nu,\iii) \in \PP_\Gamma$. For each $j \in \N$ let $A_j'$ be a microset of $A$ for which $\spt(\nu_j) \subset A_j'$. Observe that there exists a sequence $(\jjj_k^j)_{k=1}^\infty$ of finite words such that $\beta_{\jjj_k^j}(A) \to A_j'$ as $k \to \infty$. By the compactness of the Hausdorff metric $D$, the sequence $(A_j')_{j=1}^\infty$ has a converging subsequence. Since there is no danger of misunderstanding we keep denoting the subsequence by $(A_j')_{j=1}^\infty$. Let $A' \subset \Gamma$ be such that $A_j' \to A'$ as $j \to \infty$.
  
  Let $\eps>0$ and choose $j_0 \in \N$ such that $D(A_j',A') < \eps/2$ for all $j \ge j_0$. For each $j \ge j_0$ let $k \in \N$ be such that $D(\beta_{\jjj_k^j}(A),A_j') < \eps/2$. Since
  \begin{equation*}
    D(\beta_{\jjj_k^j}(A),A') \le D(\beta_{\jjj_k^j}(A),A_j') + D(A_j',A') < \eps
  \end{equation*}
  we see that also $A'$ is a microset of $A$. Assume contrarily that $\spt(\nu)$ is not contained in $A'$. Then there are $\kkk \in \Gamma_*$ and $j_0 \in \N$ such that $\nu([\kkk])>0$ and $\nu_j([\kkk])=0$ for all $j \ge j_0$. But this cannot be the case since cylinder sets are open and hence $0 = \liminf_{j \to \infty} \nu_j([\kkk]) \ge \nu([\kkk])>0$. Thus $\spt(\nu) \subset A'$ and $(\nu,\iii) \in \Omega_A$. Therefore $\Omega_\Gamma(A)$ is closed and $Q(\Omega_\Gamma(A))=1$.
  
  To show the second claim, observe that, by the definition of $Q_n$ and \eqref{eq:Pn_adapted}, we have
  \begin{align*}
    \int_{\PP_\Gamma} I(\nu,\jjj) \dd Q_n(\nu,\jjj) &= \tfrac{1}{n} \int_\Gamma \sum_{k=0}^{n-1} I(M^k(\mu_n,\iii)) \dd\mu_n(\iii)
    = -\tfrac{1}{n} \int_\Gamma \sum_{k=0}^{n-1} \log\frac{\mu_n([\iii|_{k+1}])}{\mu_n([\iii|_k])} \dd\mu_n(\iii) \\
    &= -\tfrac{1}{n} \int_\Gamma \log\mu_n([\iii|_n]) \dd\mu_n(\iii)
    = \tfrac{1}{n} \log N_n(A).
  \end{align*}
  By letting $n \to \infty$, the claim follows from \cite[Theorem 2.7]{Billingsley1999} since $I$ is continuous on $\Omega_\Gamma$ and $Q(\Omega_\Gamma)=1$.
\end{proof}

The following is an immediate corollary of Proposition \ref{thm:exists-invariant}, the ergodic decomposition, and Lemma \ref{thm:ergodic-entropy}.

\begin{corollary} \label{thm:entropy}
  If $A \subset \Gamma$ is compact, then there exists an ergodic CP-distribution $Q$ on $\PP_\Gamma$ such that for $\overline{Q}$-almost every $\mu$ the support of $\mu$ is contained in a microset of $A$ and
  \begin{equation*}
    \lim_{n \to \infty} \tfrac{1}{n} \log N_n(A) \le h(\mu).
  \end{equation*}
\end{corollary}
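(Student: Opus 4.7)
The plan is to obtain the desired ergodic CP-distribution by decomposing the CP-distribution produced by Proposition \ref{thm:exists-invariant} into ergodic components and choosing one in which both the ``support condition'' and the ``entropy condition'' survive. Denote by $Q_0$ the CP-distribution given by Proposition \ref{thm:exists-invariant}, so that $Q_0(\Omega_\Gamma(A))=1$ and
\begin{equation*}
  \int_{\PP_\Gamma} I(\nu,\jjj) \dd Q_0(\nu,\jjj) = \lim_{n\to\infty} \tfrac{1}{n}\log N_n(A).
\end{equation*}
Write $Q_0 = \int Q_\omega \dd\tau(\omega)$ for its ergodic decomposition. By the lemma preceding this corollary, $\tau$-almost every $Q_\omega$ is an ergodic CP-distribution.

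Next, I would transfer the two properties of $Q_0$ to almost every ergodic component. The set $\Omega_\Gamma(A)$ depends only on the first coordinate (namely, $\spt(\mu)$ lies in some microset of $A$), and in particular it is measurable; since $Q_0(\Omega_\Gamma(A))=1$ integrates as $\int Q_\omega(\Omega_\Gamma(A)) \dd\tau(\omega) = 1$, we get $Q_\omega(\Omega_\Gamma(A))=1$ for $\tau$-almost every $\omega$. For the entropy, integrating $I$ (which is bounded on $\Omega_\Gamma$ only up to integrability, but is non-negative) against the decomposition yields
\begin{equation*}
  \int_{\PP_\Gamma} I \dd Q_0 = \int \int_{\PP_\Gamma} I \dd Q_\omega \dd\tau(\omega),
\end{equation*}
so by the pigeonhole principle there is a set of $\omega$ of positive $\tau$-measure with $\int I \dd Q_\omega \ge \int I \dd Q_0 = \lim_{n\to\infty}\tfrac{1}{n}\log N_n(A)$. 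Pick any ergodic CP-distribution $Q=Q_\omega$ satisfying both properties simultaneously.

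Finally, I would apply Lemma \ref{thm:ergodic-entropy} to $Q$: for $\overline{Q}$-almost every $\mu$,
\begin{equation*}
  h(\mu) = \int_{\PP_\Gamma} I(\nu,\jjj) \dd Q(\nu,\jjj) \ge \lim_{n\to\infty} \tfrac{1}{n} \log N_n(A).
\end{equation*}
Moreover, since $Q$ is adapted with base $\overline{Q}$ and $Q(\Omega_\Gamma(A))=1$, the adaptedness formula \eqref{eq:adapted} applied to the indicator of $\Omega_\Gamma(A)$ (using Lemma \ref{thm:adapted_def}) forces $\overline{Q}(\{\mu : \spt(\mu) \subset A' \text{ for some microset } A' \text{ of } A\})=1$. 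Combining these two $\overline{Q}$-full-measure statements yields the conclusion.

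There is no genuine obstacle here, as the tools were prepared earlier; the only mildly delicate point is ensuring that one and the same ergodic component witnesses both the support containment and the entropy lower bound, which is handled by intersecting two sets of full $\tau$-measure.
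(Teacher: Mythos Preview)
Your argument is correct and is exactly the route the paper has in mind: the text simply says the corollary follows from Proposition \ref{thm:exists-invariant}, the ergodic decomposition, and Lemma \ref{thm:ergodic-entropy}, and you have fleshed out precisely those steps. One tiny wording slip: in your closing paragraph you speak of ``intersecting two sets of full $\tau$-measure'', but the entropy condition only gives a set of \emph{positive} $\tau$-measure (as you correctly state earlier via pigeonhole); of course the intersection of a full-measure set with a positive-measure set is still of positive measure, so the selection of a suitable $Q_\omega$ goes through unchanged.
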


\section{Microsets and dimension} \label{sec:micro_and_dim}
Our goal is to understand dimensional properties of Moran constructions. By relying on the main result of the previous section, Corollary \ref{thm:entropy}, we can now relate the Assouad dimension to the Hausdorff dimension. As a particular outcome of this observation, we are able to show that Furstenberg homogeneous self-similar sets in the real line satisfy the uniform weak separation condition.

\begin{proposition} \label{thm:micro-dimensiot}
  Suppose that $X$ is a complete metric space, $\Gamma \subset \Sigma$ is a compact set satisfying $\sigma(\Gamma) \subset \Gamma$, and $\{ E_\iii : \iii \in \Gamma_* \}$ is a Moran construction. If the Moran construction satisfies the uniform finite clustering property and there are constants $C \ge 1$ and $0<\ualpha<1$ so that
  \begin{equation} \label{eq:moran-homogeneous}
    C^{-1}\ualpha^{|\iii|} \le \diam(E_\iii) \le C\ualpha^{|\iii|}
  \end{equation}
  for all $\iii \in \Gamma_*$, then
  \begin{equation*}
    \sup\{ \dima(\pi(A')) : A' \text{ is a microset of } A \} = \sup\{ \dimh(\pi(A')) : A' \text{ is a microset of } A \}
  \end{equation*}
  for all compact sets $A \subset \Gamma$.
\end{proposition}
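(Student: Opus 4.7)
The inequality $\ge$ is immediate since $\dimh \le \dima$ on every set. For the reverse inequality, fix a microset $A'$ of $A$. Proposition \ref{thm:dimf} gives the upper bound
\[
  \dima(\pi(A')) \le L := \lim_{n\to\infty} \frac{\log N_n(A)}{\log\ualpha^{-n}} = \frac{1}{\log\ualpha^{-1}} \lim_{n \to \infty} \tfrac{1}{n}\log N_n(A),
\]
and crucially this bound depends only on $A$, not on the chosen microset. The strategy is to realize $L$ as a lower bound for the Hausdorff dimension of a well-chosen push-forward measure whose support is itself a microset of $A$.

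By Corollary \ref{thm:entropy}, there exists an ergodic CP-distribution $Q$ on $\PP_\Gamma$ such that for $\overline{Q}$-almost every $\mu \in \PP(\Gamma)$ the support of $\mu$ is contained in some microset $B_\mu$ of $A$ and $L \cdot \log\ualpha^{-1} \le h(\mu)$. For such $\mu$, the ergodicity of $Q$ together with the Birkhoff theorem applied to the information function (as in the derivation of Lemma \ref{thm:ergodic-entropy}) yields $-\tfrac{1}{n}\log\mu([\iii|_n]) \to h(\mu)$ for $\mu$-almost every $\iii$. Combining this with Proposition \ref{thm:dimloc}, which is available because the uniform finite clustering property holds, and with the homogeneity assumption \eqref{eq:moran-homogeneous} giving $\log\diam(E_{\iii|_n}) = n\log\ualpha + O(1)$, one obtains
\[
  \ldimloc(\pi\mu,\pi(\iii)) = \liminf_{n\to\infty} \frac{\log\mu([\iii|_n])}{\log\diam(E_{\iii|_n})} = \frac{h(\mu)}{\log\ualpha^{-1}}
\]
for $\mu$-almost every $\iii$. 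Hence $\ldimh(\pi\mu) \ge h(\mu)/\log\ualpha^{-1}$, and since $\pi(B_\mu)$ is a compact (thus Borel) set with $\pi\mu(\pi(B_\mu)) = 1$, the set-theoretic characterization of $\ldimh$ yields $\ldimh(\pi\mu) \le \dimh(\pi(B_\mu))$.

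Chaining the inequalities produces
\[
  \dima(\pi(A')) \le L \le \frac{h(\mu)}{\log\ualpha^{-1}} \le \dimh(\pi(B_\mu)) \le \sup\{\dimh(\pi(C)) : C \text{ is a microset of } A\},
\]
and taking the supremum over $A'$ on the left finishes the proof. The main difficulty is precisely the middle step, bridging the symbolic entropy $h(\mu)$ produced by the CP-machinery and the geometric Hausdorff dimension of a subset of $X$. It is exactly here that both hypotheses are needed: the uniform finite clustering property feeds Proposition \ref{thm:dimloc} so that the symbolic local dimension of $\mu$ agrees with the geometric lower local dimension of $\pi\mu$, while the homogeneity \eqref{eq:moran-homogeneous} linearizes $\log\diam(E_{\iii|_n})$ in $n$ and thereby converts the entropy rate $h(\mu)$ into a dimension value at the right scale $\log\ualpha^{-1}$.
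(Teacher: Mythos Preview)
Your proof is correct and follows essentially the same route as the paper's: both combine Proposition~\ref{thm:dimf} with Corollary~\ref{thm:entropy} to bound $\dima(\pi(A'))$ above by $h(\mu)/\log\ualpha^{-1}$ for a suitable $\mu$ supported in a microset, and then invoke Proposition~\ref{thm:dimloc} together with the homogeneity hypothesis \eqref{eq:moran-homogeneous} to convert this entropy rate into the lower Hausdorff dimension of $\pi\mu$. Your write-up is in fact somewhat more explicit than the paper's about why the limit $-\tfrac{1}{n}\log\mu([\iii|_n])$ exists (Birkhoff for the information function on the CP-system) and about how each hypothesis is used.
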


\begin{proof}
  Recall that by Proposition \ref{thm:dimf} we have
  \begin{equation*}
    \dima(\pi(A')) \le \lim_{n \to \infty} \frac{\log N_n(A)}{\log\ualpha^{-n}}
  \end{equation*}
  for all microsets $A'$ of $A$. The assumption \eqref{eq:moran-homogeneous} together with Proposition \ref{thm:dimloc} give
  \begin{equation*}
    \liminf_{n \to \infty} \frac{\log\mu([\iii|_n])}{\log\ualpha^n} = \ldimh(\pi\mu)
  \end{equation*}
  for any Borel probability measure $\mu$. According to Corollary \ref{thm:entropy}, there exists an ergodic CP-distribution $Q$ on $\PP_\Gamma$ such that for $\overline{Q}$-almost every $\mu$ the support of $\mu$ is contained in a microset of $A$ and
  \begin{equation*}
    \lim_{n \to \infty} \tfrac{1}{n} \log N_n(A) \le -\lim_{n \to \infty} \tfrac{1}{n} \log\mu([\iii|_n]).
  \end{equation*}
  Since $\spt(\mu) \subset A'$ for some microset $A'$ of $A$ we have $\ldimh(\pi\mu) \le \dimh(A')$. The proof follows by combining all the above estimates.
\end{proof}

The following is an immediate corollary of Propositions \ref{thm:micro-dimensiot} and \ref{thm:dima_cifs}.

\begin{corollary} \label{thm:cifs-micro-dimensiot}
  Suppose that $\{ \fii_i \}_{i=1}^\kappa$ is a conformal iterated function system, $\Gamma \subset \Sigma$ is a compact set satisfying $\sigma(\Gamma) \subset \Gamma$, and $\{ \fii_\iii(W) : \iii \in \Gamma_* \}$ is a Moran construction as in Lemma \ref{thm:ifs}. If the Moran construction satisfies the uniform finite clustering property and there are constants $C \ge 1$ and $0<\ualpha<1$ so that
  \begin{equation*}
    C^{-1}\ualpha^{|\iii|} \le \diam(\fii_\iii(W)) \le C\ualpha^{|\iii|}
  \end{equation*}
  for all $\iii \in \Gamma_*$, then
  \begin{equation*}
    \dima(\pi(A)) = \sup\{ \dimh(\pi(A')) : A' \text{ is a microset of } A \}.
  \end{equation*}
  for all compact sets $A \subset \Gamma$.
\end{corollary}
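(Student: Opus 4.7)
The plan is to recognize this as a direct combination of the two preceding propositions and to verify that their hypotheses are genuinely met by the data of the corollary. First I would check the easy inequality $\dima(\pi(A)) \ge \sup\{\dimh(\pi(A')) : A' \text{ is a microset of } A\}$: for any microset $A'$ of $A$ we have $\pi(A') \subset \pi(E) = E$ where $E$ is the limit set, but more pertinently, each microset $A'$ arises as a limit $A' = \lim \beta_{\iii_n}(A)$, and one can use the bounded distortion property (precisely as in the proof of Proposition~\ref{thm:dima_cifs}) to compare balls in $\pi(A')$ with balls in $\pi(A)$, giving $\dima(\pi(A')) \le \dima(\pi(A))$; combined with $\dimh \le \dima$ in general, this yields the easy bound.

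The substantive direction goes by chaining Proposition~\ref{thm:dima_cifs} with Proposition~\ref{thm:micro-dimensiot}. Since $\{\fii_i\}_{i=1}^\kappa$ is conformal, Proposition~\ref{thm:dima_cifs} applies directly and gives
\begin{equation*}
  \dima(\pi(A)) = \sup\{ \dima(\pi(A')) : A' \text{ is a microset of } A \}.
\end{equation*}
On the other hand, the Moran construction $\{\fii_\iii(W) : \iii \in \Gamma_*\}$ obtained from Lemma~\ref{thm:ifs} satisfies \ref{M3} (automatic for conformal IFS, cf.~Example~\ref{ex:moran}), satisfies the uniform finite clustering property by hypothesis, and the diameter bound $C^{-1}\ualpha^{|\iii|} \le \diam(\fii_\iii(W)) \le C\ualpha^{|\iii|}$ is exactly the hypothesis \eqref{eq:moran-homogeneous} needed in Proposition~\ref{thm:micro-dimensiot}. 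Applying that proposition gives
\begin{equation*}
  \sup\{ \dima(\pi(A')) : A' \text{ is a microset of } A \} = \sup\{ \dimh(\pi(A')) : A' \text{ is a microset of } A \},
\end{equation*}
and concatenating the two equalities yields the claim.

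Because the real content of the statement is already packaged inside Propositions~\ref{thm:dima_cifs} and \ref{thm:micro-dimensiot}, no further work is required beyond this hypothesis check; there is no genuine obstacle to overcome. The one place worth being careful is that in Proposition~\ref{thm:micro-dimensiot} the diameter comparison \eqref{eq:moran-homogeneous} is stated for the abstract sets $E_\iii$ of the Moran construction, and one must simply observe that substituting $E_\iii = \fii_\iii(W)$ gives precisely the assumption of the corollary.
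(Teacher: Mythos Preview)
Your proposal is correct and matches the paper's approach exactly: the paper states this as an ``immediate corollary of Propositions~\ref{thm:micro-dimensiot} and~\ref{thm:dima_cifs}'' with no further proof, and you have correctly identified that chaining the equality of Proposition~\ref{thm:dima_cifs} with that of Proposition~\ref{thm:micro-dimensiot} (whose hypotheses are met by the diameter bound and UFCP) gives the result. Your first paragraph on the ``easy inequality'' is unnecessary, since both propositions already deliver equalities, but it does no harm.
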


Observe that, by Proposition \ref{thm:wsc}, we are in the setting of Corollary \ref{thm:cifs-micro-dimensiot} if the conformal iterated function system satisfies the uniform weak separation condition. Recall also that, by Remark \ref{rem:wsc}(2), the open set condition implies the uniform weak separation condition.

We will now define microsets in $\R^d$. To distinguish this definition from the symbolic one given in \S \ref{sec:microset}, we will call them Furstenberg microsets. Let $Q = [0,1]^d$. A set $A' \subset Q$ is a \emph{Furstenberg miniset} of $A \subset Q$ if $A' \subset (\lambda A + t) \cap Q = \{ \lambda x + t : x \in A \} \cap Q$ for some $\lambda \ge 1$ and $t \in \R^d$. A set $A' \subset Q$ is a \emph{Furstenberg microset} of a compact set $A \subset Q$ if there exists a sequence $(A_n')_{n=1}^\infty$ of Furstenberg minisets of $A$ such that $A_n' \to A'$ in the Hausdorff metric. Note that a Furstenberg miniset of a compact set is clearly a Furstenberg microset.

\begin{remark} \label{rem:microset_is_microset}
  Suppose that $\{ \fii_i \}_{i=1}^\kappa$ is a homothetic iterated function system, $\Gamma \subset \Sigma$ is a compact set satisfying $\sigma(\Gamma) \subset \Gamma$, and $\{ \fii_\iii(W) : \iii \in \Gamma_* \}$ is a Moran construction as in Lemma \ref{thm:ifs}. We assume that $W$ can be chosen to be $Q$. Then the self-homothetic set $E$ is contained in $Q$. Observe that, since each $\fii_i$ is a contracting homothety, for each $\iii \in \Sigma_*$ there are $\lambda_\iii > 1$ and $t_\iii \in \R^d$ such that $\fii_\iii^{-1}(x) = \lambda_\iii x + t_\iii$ for all $x \in \R^d$. If $A' \subset \Gamma$ is a miniset of a given set $A \subset \Gamma$, then $A' = \sigma^{|\iii|}(A \cap [\iii])$ for some $\iii \in \Sigma_*$ and thus
  \begin{equation*}
    \pi(A') \subset \fii_\iii^{-1}(\pi(A)) \cap E \subset (\lambda_\iii \pi(A) + t_\iii) \cap Q.
  \end{equation*}
  Therefore, for a homothetic iterated function system, a projected miniset is a Furstenberg miniset. Consequently, since the projection is continuous, a projected microset is a Furstenberg microset.
\end{remark}

We say that a set $A \subset Q$ is \emph{Furstenberg homogeneous} if every Furstenberg microset of $A$ is a Furstenberg miniset of $A$. The following proposition is proved in \cite[Claim 9.6]{Hochman2012}. Recall that a self-similar set $E$ satisfies the \emph{strong separation condition} if $\fii_i(E) \cap \fii_j(E) = \emptyset$ whenever $i \ne j$, where $\fii_i$'s are the similarities of the associated iterated function system. It is well known that the strong separation condition implies the open set condition.

\begin{proposition}
  A self-homothetic set $E \subset Q$ satisfying the strong separation condition is Furstenberg homogeneous.
\end{proposition}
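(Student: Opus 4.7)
The plan is to show that every Furstenberg microset of $E$ is contained in a Furstenberg miniset. Let $A'$ be a Furstenberg microset, given as a Hausdorff limit of sets $A'_n \subset (\lambda_n E + t_n) \cap Q$ with $\lambda_n \ge 1$; the goal is to produce $\lambda \ge 1$ and $t \in \R^d$ with $A' \subset (\lambda E + t) \cap Q$. After passing to a subsequence and discarding the trivial case in which infinitely many $A'_n$ are empty, I may assume $A'_n \ne \emptyset$ for every $n$, and that either $\lambda_n$ stays bounded or $\lambda_n \to \infty$. In the bounded case, $A'_n \subset Q$ forces $t_n$ to be bounded as well; passing to a further subsequence with $\lambda_n \to \lambda \ge 1$ and $t_n \to t$, Hausdorff convergence $\lambda_n E + t_n \to \lambda E + t$ on compacta gives $A' \subset (\lambda E + t) \cap Q$.

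The substantive case is $\lambda_n \to \infty$, where I would exploit strong separation to decode a symbolic address iteratively. Let $\delta := \min_{i \ne j}\dist(\fii_i(E),\fii_j(E)) > 0$; any nonempty subset of $E$ of diameter less than $\delta$ must lie in a single cylinder $\fii_i(E)$. Writing $W_n := \lambda_n^{-1}(Q - t_n)$, a cube of side $\lambda_n^{-1}$, the miniset inclusion is equivalent to $A'_n \subset \lambda_n(E \cap W_n) + t_n$, so $E \cap W_n$ is nonempty. I inductively construct words $\iii^{(k)}$ of length $k$ with $E \cap W_n \subset \fii_{\iii^{(k)}}(E)$: starting from $\iii^{(0)} := \varnothing$, the pullback $\fii_{\iii^{(k)}}^{-1}(W_n)$ is a cube of diameter $\sqrt{d}/(\lambda_n r_{\iii^{(k)}})$, and whenever this diameter falls below $\delta$, the nonempty intersection $\fii_{\iii^{(k)}}^{-1}(W_n) \cap E$ lies in a unique $\fii_{i_{k+1}}(E)$, yielding $\iii^{(k+1)} := \iii^{(k)} i_{k+1}$.

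Let $k_n$ be the first level at which extension fails, so that $\lambda_n r_{\iii^{(k_n)}} \le \sqrt{d}/\delta$; since $\lambda_n \to \infty$, one has $k_n \ge 1$ for all large $n$. Set $\iii_n := \iii^{(k_n - 1)}$, $\tilde\lambda_n := \lambda_n r_{\iii_n}$, and $\tilde t_n := \lambda_n \fii_{\iii_n}(0) + t_n$. The successful extension at level $k_n - 1$ yields $\tilde\lambda_n > \sqrt{d}/\delta$, while $\lambda_n r_{\iii^{(k_n)}} = \tilde\lambda_n r_{i_{k_n}} \ge \tilde\lambda_n r_{\min}$ combined with the failure condition gives $\tilde\lambda_n \le \sqrt{d}/(\delta r_{\min})$, where $r_{\min} := \min_i r_i > 0$. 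Since $\fii_i(E), \fii_j(E) \subset Q$, we have $\delta \le \diam(Q) = \sqrt{d}$, so $\tilde\lambda_n \in [1, \sqrt{d}/(\delta r_{\min})]$. The inclusion $A'_n \subset \lambda_n \fii_{\iii_n}(E) + t_n = \tilde\lambda_n E + \tilde t_n$ then passes to a subsequential limit: $\tilde\lambda_n \to \lambda \ge 1$ and $\tilde t_n \to t$ (bounded because $A'_n \subset Q$ is nonempty), giving $A' \subset (\lambda E + t) \cap Q$ as required.

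The main technical hurdle will be the stopping-time bookkeeping in the iteration, and in particular verifying that the inclusion $E \cap W_n \subset \fii_{\iii^{(k)}}(E)$ genuinely propagates under the homothetic pullback at each step. The two-sided control on $\tilde\lambda_n$ splits naturally into a lower bound coming from the stopping rule and an upper bound from a one-step overshoot regulated by $r_{\min}$, while the geometric fact $\delta \le \sqrt{d}$ is what makes the lower bound no smaller than $1$.
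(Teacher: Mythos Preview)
Your argument is correct. The inductive address-finding via the separation gap $\delta$ is the natural mechanism, and the stopping-time bookkeeping is clean: the ``success'' inequality at level $k_n-1$ gives $\tilde\lambda_n>\sqrt{d}/\delta\ge 1$, while the one-step overshoot at level $k_n$ gives the uniform upper bound $\sqrt{d}/(\delta r_{\min})$, so compactness closes the proof. The only point worth making explicit is that the invariant $E\cap W_n\subset\fii_{\iii^{(k)}}(E)$ indeed propagates, since $\fii_{\iii^{(k)}}^{-1}(E\cap W_n)$ is a nonempty subset of $E$ of diameter at most $\diam(\fii_{\iii^{(k)}}^{-1}(W_n))<\delta$ and hence lies in a single first-level cylinder; you state this but do not quite spell it out.

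As for comparison with the paper: there is nothing to compare against. The paper does not supply a proof of this proposition at all; it merely cites \cite[Claim~9.6]{Hochman2012}. Your write-up therefore goes well beyond what the paper does, providing a complete self-contained argument where the paper only gives a reference.
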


An interesting and completely open question is to find a characterization for Furstenberg homogeneous sets. By relying on Corollary \ref{thm:cifs-micro-dimensiot} and \cite[Theorem 3.1]{FraserHendersonOlsonRobinson2015}, we are now able to study this question.

\begin{theorem}
  A Furstenberg homogeneous self-similar set $E \subset [0,1]$ with $\dimh(E) < 1$ satisfies the uniform weak separation condition.
\end{theorem}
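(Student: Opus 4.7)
I would argue by contradiction. Assume $E$ does not satisfy the uniform weak separation condition. The theorem of Fraser, Henderson, Olson, and Robinson cited in the introduction states that a self-similar subset of $\R$ failing UWSC must satisfy $\dima(E) = 1$. My plan is to convert this into a Furstenberg microset of $E$ whose Hausdorff dimension is close to $1$, and then use Furstenberg homogeneity to force $\dimh(E) \ge 1$, contradicting the hypothesis $\dimh(E) < 1$.

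The existence of such a microset should come from Corollary \ref{thm:cifs-micro-dimensiot} together with Remark \ref{rem:microset_is_microset}: the corollary identifies $\dima(E)$ with the supremum of $\dimh(\pi(A'))$ over symbolic microsets $A'$, and by the remark each such projection is a Furstenberg microset of $E$. From $\dima(E) = 1$ one would then extract Furstenberg microsets $F_n$ of $E$ with $\dimh(F_n) \to 1$.

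Now invoke Furstenberg homogeneity: each $F_n$ is a Furstenberg miniset, so there exist $\lambda_n \ge 1$ and $t_n \in \R$ with
\[
  F_n \subset (\lambda_n E + t_n) \cap [0,1].
\]
Consequently $\dimh(F_n) \le \dimh(\lambda_n E + t_n) = \dimh(E) < 1$, and passing to the limit yields $1 \le \dimh(E) < 1$, the desired contradiction. Hence $E$ must satisfy UWSC.

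The main obstacle is that Corollary \ref{thm:cifs-micro-dimensiot} as stated requires the uniform finite clustering property, and for self-similar sets this is essentially the UWSC we are trying to prove, making the argument look circular. The way around this is to extract what is needed directly from the machinery behind the corollary rather than from its statement: Proposition \ref{thm:dima_cifs} gives $\dima(E) = \sup\{\dima(\pi(A'))\}$ over symbolic microsets without any separation assumption, and Corollary \ref{thm:entropy} then produces, for a microset $A'$ with $\dima(\pi(A'))$ close to $\dima(E) = 1$, an ergodic CP-distribution whose typical measure is supported inside a microset of $A'$ and has entropy matching the target dimension. The self-conformal scaling relations available from Remark \ref{rem:microset_is_microset} convert this entropy bound into a lower bound on $\dimh$ of some iterated microset (which by Lemma \ref{thm:micromicro} is still a microset of $E$), delivering the Furstenberg microset with $\dimh$ near $1$ that the contradiction requires.
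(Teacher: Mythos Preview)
Your overall contrapositive strategy is fine and mirrors the paper's logic. The real issue is the one you yourself flag: Corollary~\ref{thm:cifs-micro-dimensiot} needs the uniform finite clustering property, and for the iterated function system defining $E$ this is exactly the UWSC you are trying to establish. Your proposed workaround, however, does not escape the circularity. To pass from $\dima(\pi(A'))$ to $\lim_n \tfrac{1}{n}\log N_n(A')$ you need Proposition~\ref{thm:dimf}, and to convert the entropy bound from Corollary~\ref{thm:entropy} into a lower bound on $\dimh(\pi\mu)$ you need Proposition~\ref{thm:dimloc}; both of these explicitly assume the uniform finite clustering property. So the chain you sketch still rests on UFCP at two separate points, and the argument remains circular. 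There is also a second obstruction you do not mention: Corollary~\ref{thm:cifs-micro-dimensiot} (via Proposition~\ref{thm:micro-dimensiot}) requires the homogeneity estimate $C^{-1}\ualpha^{|\iii|}\le\diam(\fii_\iii(W))\le C\ualpha^{|\iii|}$, which fails for a generic self-similar system with unequal contraction ratios.

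The paper's proof resolves both problems with a single idea you are missing: it does \emph{not} apply Corollary~\ref{thm:cifs-micro-dimensiot} to the iterated function system defining $E$. Instead it uses the dyadic system $\fii_1(x)=\tfrac12 x$, $\fii_2(x)=\tfrac12 x+\tfrac12$ on $[0,1]$, which trivially satisfies the open set condition (hence UFCP) and has equal contraction ratios (hence the homogeneity estimate). The self-similar set $E$ is then treated merely as an arbitrary compact subset $\pi(A)$ of $[0,1]$ for some compact $A$ in the full binary shift. Corollary~\ref{thm:cifs-micro-dimensiot} now applies cleanly and gives $\dima(E)=\sup\{\dimh(\pi(A')):A'\text{ a microset of }A\}$; Remark~\ref{rem:microset_is_microset} (which applies because the dyadic maps are homotheties with $W=[0,1]$) turns each $\pi(A')$ into a Furstenberg microset of $E$; and Furstenberg homogeneity then forces $\dima(E)\le\dimh(E)<1$, after which Fraser--Henderson--Olson--Robinson finishes the job. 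Once you see this auxiliary-IFS trick, the proof is short and entirely non-circular.
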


\begin{proof}
  Let $\{ \fii_1,\fii_2 \}$ be a homothetic iterated function system so that $\fii_1 \colon \R \to \R$, $\fii_1(x) = \tfrac12 x$, and $\fii_2 \colon \R \to \R$, $\fii_2(x) = \tfrac12 x + \tfrac12$. Then the Moran construction $\{ \fii_\iii([0,1]) : \iii \in \Sigma_* \}$ satisfies the assumptions of Corollary \ref{thm:cifs-micro-dimensiot} and the associated self-homothetic set is $[0,1]$.
  
  For each compact $E \subset [0,1]$ there is a compact set $A \subset \Sigma$ such that $\pi(A)=E$. By Corollary \ref{thm:cifs-micro-dimensiot}, we thus have
  \begin{equation*}
    \dima(E) = \sup\{ \dimh(\pi(A')) : A' \text{ is a microset of } A \}.
  \end{equation*}
  Let $A' \subset \Sigma$ be a microset of $A$. By Remark \ref{rem:microset_is_microset}, the projection $\pi(A')$ is a Furstenberg microset of $E$. Therefore, if $E$ is Furstenberg homogeneous, then $\pi(A')$ is a Furstenberg miniset. By the definition of Furstenberg homogeneity and the standard properties of the Hausdorff dimension, we thus have $\dimh(\pi(A')) \le \dimh(E)$. Therefore, the assumption $\dimh(E)<1$, together with the Furstenberg homogeneity, implies $\dima(E)<1$.
  
  By \cite[Theorem 3.1]{FraserHendersonOlsonRobinson2015}, we know that if $E \subset \R$ is a self-similar set with $\dima(E)<1$, then it satisfies the uniform weak separation condition; recall \cite[Theorem 1]{Zerner1996}. This finishes the proof.
\end{proof}

In light of the above result, it is now interesting to ask whether the weak separation condition characterizes Furstenberg homogeneous self-homothetic sets in the line. The following example shows that this is not the case. Observe that it is certainly possible for a self-homothetic set which satisfies the weak separation condition but does not satisfy the open set condition to be Furstenberg homogeneous. For example, duplicating one mapping in any homothetic iterated function system satisfying the strong separation condition serves as a simple example.

\begin{example} \label{ex:furstenberg-OSC}
  We exhibit a self-homothetic set which satisfies the open set condition but fails to be Furstenberg homogeneous. For each $i \in \{ 1,2,3 \}$ define $\fii_i \colon \R \to \R$ by setting
  \begin{equation*}
    \fii_1(x) = \tfrac12 x, \quad \fii_2(x) = \tfrac15 x + \tfrac12, \quad \fii_3(x) = \tfrac17 x + \tfrac67,
  \end{equation*}
  for all $x \in \R$. It is evident that the iterated function system $\{ \fii_i \}_{i=1}^3$ satisfies the open set condition. Let $E$ be the associated self-similar set. Observe that $E \subset [0,1]$ with $\diam(E)=1$. Furthermore, if $\iii \bot \jjj$ and $\fii_\iii(E) \cap \fii_\jjj(E) \ne \emptyset$, then the intersection consists of a single point $\fii_{\iii \wedge \jjj}(\tfrac12)$. Since $a^n \ne b^m$ for all $n,m \in \N$ and $a,b \in \{ 2,5,7 \}$ with $a \ne b$ we see that
  \begin{equation} \label{eq:different_diam}
  \begin{split}
    \diam(\fii_\iii(E)) &= \diam(\fii_{\iii \wedge \jjj}(E)) \cdot \tfrac12 \bigl( \tfrac17 \bigr)^{|\iii|-|\iii \wedge \jjj|-1} \\ &\ne \diam(\fii_{\iii \wedge \jjj}(E)) \cdot \tfrac15 \bigl( \tfrac12 \bigr)^{|\jjj|-|\iii \wedge \jjj|-1} = \diam(\fii_\jjj(E))
  \end{split}
  \end{equation}
  whenever $\iii \bot \jjj$ and $\fii_\iii(E) \cap \fii_\jjj(E) \ne \emptyset$.
  
  Since $\log_2 7 \in \R \setminus \Q$ the set $\{ m\log_2 7 \mod 1 : m \in \N \}$ is dense in $[0,1]$. Thus there exist increasing sequences $(n_j)_{j \in \N}$ and $(m_j)_{j \in \N}$ of integers so that
  \begin{equation*}
    n_j \le 1 + m_j \log_2 7 - \log_2 5 < n_j + j^{-1}.
  \end{equation*}
  Define $\overline{i} = iii\cdots$ for all $i \in \{ 1,2,3 \}$. Observe that the interval $[\tfrac12 - \tfrac12 (\tfrac17)^{m_j}, \tfrac12 + \tfrac12 (\tfrac17)^{m_j}]$ intersects the sets $\fii_{1(\overline{3}|_{m_j})}(E)$ and $\fii_{2(\overline{1}|_{n_j})}(E)$. The center of the interval $\tfrac12$ is the largest point of the set $\fii_{1(\overline{3}|_{m_j})}(E)$ and the smallest point of the set $\fii_{2(\overline{1}|_{n_j})}(E)$. Since
  \begin{equation*}
    \diam(\fii_{1(\overline{3}|_{m_j})}(E)) = \tfrac12 \bigl( \tfrac17 \bigr)^{m_j}
  \end{equation*}
  and
  \begin{equation*}
    \diam(\fii_{2(\overline{1}|_{n_j})}(E)) = \tfrac15 \bigl( \tfrac12 \bigr)^{n_j} \ge \tfrac12 \bigl( \tfrac12 \bigr)^{1+m_j\log_27-\log_25} = \tfrac12 \bigl( \tfrac17 \bigr)^{m_j}
  \end{equation*}
  there is no $\iii \in \Sigma_*$ with $\iii \bot 1(\overline{3}|_{m_j})$ and $\iii \bot 2(\overline{1}|_{n_j})$ so that $\fii_\iii(E)$ intersects $[\tfrac12 - \tfrac12 (\tfrac17)^{m_j}, \tfrac12 + \tfrac12 (\tfrac17)^{m_j}]$.
  
  For each $0 \le u<v\le 1$ we define a mapping $M_{u,v} \colon \R \to \R$ by setting
  \begin{equation*}
    M_{u,v}(x) = \frac{x-u}{v-u}
  \end{equation*}
  for all $x \in \R$. Thus $M_{u,v}$ is a homothety taking $[u,v]$ to $[0,1]$. Let $u_j = \tfrac12 - \tfrac12 (\tfrac17)^{m_j}$ and $v_j = \tfrac12 + \tfrac12 (\tfrac17)^{m_j}$ for all $j \in \N$. Then $M_{u_j,v_j}$ magnifies by a constant $7^{m_j}$ so that $M_{u_j,v_j}(\tfrac12) = \tfrac12$. Recall that we have
  \begin{equation*}
    7^{m_j} \diam(\fii_{1(\overline{3}|_{m_j})}(E)) = \tfrac12
  \end{equation*}
  and since
  \begin{equation*}
    \tfrac12 \le 7^{m_j}\diam(\fii_{2(\overline{1}|_{n_j})}(E)) \le 7^{m_j}\tfrac15\bigl( \tfrac12 \bigr)^{1+m_j\log_27-\log_25-j^{-1}} = \tfrac12 \sqrt[j]{2}
  \end{equation*}
  we have
  \begin{equation*}
    7^{m_j}\diam(\fii_{2(\overline{1}|_{n_j})}(E)) \to \tfrac12
  \end{equation*}
  as $j \to \infty$. Since $\fii_{1(\overline{3}|_{m_j})}(E) \cap \fii_{2(\overline{1}|_{n_j})}(E) = \{ \tfrac12 \}$ it therefore holds that
  \begin{equation*}
    M_{u_j,v_j}(E \cap [u_j,v_j]) \to \tfrac12 E \cup (\tfrac12 E + \tfrac12)
  \end{equation*}
  in the Hausdorff metric as $j \to \infty$. Thus the set $\tfrac12 E \cup (\tfrac12 E + \tfrac12)$ is a microset. By showing that it is not a miniset, it follows that $E$ is not Furstenberg homogeneous. Since, by \eqref{eq:different_diam}, any two sets $\fii_\iii(E)$ next to each other have different lengths, the set $\tfrac12 E \cup (\tfrac12 E + \tfrac12)$ cannot be a miniset whose center is at the intersection of the magnification of these sets. For this reason the claim is intuitively easy to believe. Nevertheless, we will provide the reader with a rigorous argument.
  
  Suppose to the contrary that $\tfrac12 E \cup (\tfrac12 E + \tfrac12) \subset M_{u,v}(E \cap [u,v])$ where $u = \pi(\iii)$ and $v = \pi(\jjj)$. By shifting, we may assume that $\iii \wedge \jjj = \varnothing$. Let
  \begin{equation*}
    H^0 = [0,1] \setminus \bigcup_{i=1}^3 \fii_i([0,1]) = (\tfrac{7}{10},\tfrac67)
  \end{equation*}
  and $\eta = \diam(H^0) = \tfrac{11}{70}$. If there exists an interval $H \subset [u,v] \setminus E$ with $\diam(H) > \tfrac12 \eta(v-u)$, then
  \begin{equation*}
    \tfrac12 E \cup (\tfrac12 E + \tfrac12) \setminus M_{u,v}(E \cap [u,v]) \ne \emptyset
  \end{equation*}
  and we have a contradiction. Our plan is to show the existence of such an interval $H$.
  
  Since $u<v$ and $\iii \wedge \jjj = \varnothing$, we see that $\iii|_1 \ne 3$ and $\jjj|_1 \ne 1$. If $\jjj|_1 = 3$, then $H^0 \subset [u,v] \setminus E$. Since $\diam(H^0) > \tfrac12 \eta (v-u)$ we may choose $H$ to be $H^0$ and we are done. Therefore, we may assume that $\jjj|_1 = 2$ and thus $\iii|_1 = 1$. Since $u<v$ we have $u \ne \tfrac12$ or $v \ne \tfrac12$. In other words, we have $\iii \ne 1\overline{3}$ or $\jjj \ne 2\overline{1}$. Interpreting $\inf(\emptyset) = \infty$ we thus have
  \begin{equation*}
    n=\inf\{ k \ge 2 : \sigma^{k-1}(\iii)|_1 \ne 3 \} < \infty \quad \text{or} \quad m=\inf\{ k \ge 2 : \sigma^{k-1}(\jjj)|_1 \ne 1 \} < \infty.
  \end{equation*}
  If $k=\infty$, then we interpret that $\diam(\fii_{\kkk|_k}(E))=0$ for all $\kkk \in \Sigma$. Recalling \eqref{eq:different_diam}, there are now two cases: either $\diam(\fii_{\iii|_{n-1}}(E)) > \diam(\fii_{\jjj|_{m-1}}(E)) \ge 0$ or $\diam(\fii_{\jjj|_{m-1}}(E)) > \diam(\fii_{\iii|_{n-1}}(E)) \ge 0$. In the first case, since $n<\infty$ and $\sigma^{n-1}(\iii)|_1 \in \{ 1,2 \}$, we have $\fii_{\iii|_{n-1}}(H^0) \subset [u,v] \setminus E$ and $\diam(\fii_{\iii|_{n-1}}(H^0)) = \eta \diam(\fii_{\iii|_{n-1}}(E)) > \tfrac12 \eta(v-u)$. Thus, choosing $H$ to be $\fii_{\iii|_{n-1}}(H^0)$, we are done.
  
  Now we are left with the case in which $m<\infty$ and $\diam(\fii_{\jjj|_{m-1}}(E)) > \diam(\fii_{\iii|_{n-1}}(E)) \ge 0$. If $\sigma^{m-1}(\jjj)|_1 = 3$, then, choosing $H$ to be $\fii_{\jjj|_{m-1}}(H^0)$, we are done since $\diam(\fii_{\jjj|_{m-1}}(H^0)) = \eta\diam(\fii_{\jjj|_{m-1}}(E)) > \tfrac12\eta(\diam(\fii_{\iii|_{n-1}}(E))+\diam(\fii_{\jjj|_{m-1}}(E))) \ge \tfrac12\eta(v-u)$. Let us thus assume that $\sigma^{m-1}(\jjj)|_1 = 2$. With respect to the diameter of $\fii_{\iii|_{n-1}}(E)$, we get three different subcases. For simplicity, we first rescale and translate the system so that $\diam(\fii_{\jjj|_{m-1}}(E)) = 1$ and $\pi((\jjj|_{m-1})\overline{1})=0$. It follows that $u \le 0$ and $\tfrac12 \le v \le \tfrac12 + \tfrac15$.
  
  If $\diam(\fii_{\iii|_{n-1}}(E)) < \tfrac12 - \tfrac15$, then $v-u < 1$ and so $\diam(\fii_{(\jjj|_{m-1})1}(H^0)) = \eta\diam(\fii_{(\jjj|_{m-1})1}(E)) = \tfrac12 \eta > \tfrac12 \eta (v-u)$. Since $\fii_{(\jjj|_{m-1})1}(H^0) \subset [u,v] \setminus E$ we may choose $H$ to be $\fii_{(\jjj|_{m-1})1}(H^0)$ and we are done. If $\diam(\fii_{\iii|_{n-1}}(E)) > \tfrac12 + \tfrac15$, then $n < \infty$ and $\sigma^{n-1}(\iii)|_1 \in \{ 1,2 \}$. Therefore $\fii_{\iii|_{n-1}}(H^0) \subset [u,v] \setminus E$ and $\diam(\fii_{\iii|_{n-1}}(H^0)) = \eta \diam(\fii_{\iii|_{n-1}}(E)) > \tfrac12 \eta (\tfrac12+\tfrac15 + \diam(\fii_{\iii|_{n-1}}(E))) > \tfrac12 \eta (v-u)$. Choosing $H$ to be $\fii_{\iii|_{n-1}}(H^0)$ we are done.
  
  We may now assume that $\tfrac12 - \tfrac15 \le \diam(\fii_{\iii|_{n-1}}(E)) \le \tfrac12 + \tfrac15$. This subcase divides further into four subcases. If $|u|>|v|$, then $\fii_{\iii|_{n-1}}(H^0) \subset [u,v] \setminus E$ and $\diam(\fii_{\iii|_{n-1}}(H^0)) = \eta \diam(\fii_{\iii|_{n-1}}(E)) \ge \eta|u| > \tfrac12\eta(|u|+|v|) = \tfrac12 \eta(v-u)$. Therefore, choosing $H$ to be $\fii_{\iii|_{n-1}}(H^0)$ we are done. Let us then assume that $|u| \le |v|$ and $v > \tfrac12$. Defining $H = \fii_{(\jjj|_{m-1})1}(H^0) \subset [u,v] \setminus E$ we see that $\diam(H) = \eta \diam(\fii_{(\jjj|_{m-1})1}(E)) = \tfrac12 \eta = \tfrac{5}{14}\eta(1+\tfrac25) \ge \tfrac{5}{14}\eta(v-u)>\tfrac14\eta(v-u)$.
  Observe that $[0,1] \setminus (\tfrac12 E \cup (\tfrac12 E + \tfrac12))$ contains two intervals of length $\tfrac12\eta$ and the lengths of all the other intervals are at most $\tfrac14\eta$.
  Thus, regarding the size, it is possible that $M_{u,v}(H \cap [u,v])$ is contained in the complement of $\tfrac12 E \cup (\tfrac12 E + \tfrac12)$. This can only happen when
  \begin{equation*}
    M_{u,v}(w) < \tfrac12 \quad \text{or} \quad \tfrac{17}{20} = \tfrac12 + \tfrac12(\tfrac12+\tfrac15) \le M_{u,v}(w),
  \end{equation*}
  where $w = \inf(H)$. We will show that $M_{u,v}(H \cap [u,v])$ is positioned in such a way that this is not possible. Since $v-w = \tfrac12(\eta+\tfrac17) + v-\tfrac12 = \tfrac{3}{20} + v-\tfrac12$, $v-u \le 2v$, and $v>\tfrac12$ we get
  \begin{equation*}
    M_{u,v}(w) = 1 - \frac{v-w}{v-u} \le 1 - \frac{\tfrac{3}{20}+v-\tfrac12}{2v} = 1 - \tfrac{3}{20} \frac{1+\tfrac{10}{3}(2v-1)}{1+2v-1} < 1-\tfrac{3}{20} = \tfrac{17}{20}.
  \end{equation*}
  On the other hand, since $v-w = \tfrac{3}{20}+ v-\tfrac12 \le \tfrac{3}{20}+\tfrac15 = \tfrac{7}{20}$ and $w-u \ge w \ge \tfrac12(\tfrac12+\tfrac15) = \tfrac{7}{20}$ we have
  \begin{equation*}
    M_{u,v}(w)^{-1} = 1 + \frac{v-w}{w-u} \le 2
  \end{equation*}
  and hence
  \begin{equation*}
    M_{u,v}(w) \ge \tfrac{1}{2}.
  \end{equation*}
  Therefore, $\tfrac12 E \cup (\tfrac12 E + \tfrac12) \setminus M_{u,v}(E \cap [u,v]) \ne \emptyset$ also in this case.
  
  It remains to consider the case in which $|u| \le |v|$ and $v=\tfrac12$. If $|u|<|v|$, then $v-u<1$ and $\diam(\fii_{(\jjj|_{m-1})1}(H^0)) = \tfrac12\eta\diam(\fii_{\jjj|_{m-1}}(E)) = \tfrac12\eta > \tfrac12\eta(v-u)$. We are done by choosing $H$ to be $\fii_{(\jjj|_{m-1})1}(H^0)$. Finally, if $|u|=|v|=\tfrac12$, then, by \eqref{eq:different_diam}, we see that $\diam(\fii_{\iii|_{n-1}}(E)) > \diam(\fii_{(\jjj|_{m-1})1}(E)) = \tfrac12$. Since now, $\diam(\fii_{\iii|_{n-1}}(H^0)) = \eta\diam(\fii_{\iii|_{n-1}}(E) > \tfrac12\eta = \tfrac12\eta(v-u)$, we are done by choosing $H$ to be $\fii_{\iii|_{n-1}}(H^0)$.
\end{example}

\begin{ack}
  We thank M.\ Hochman for discussions related to the topics of this article.
\end{ack}

\bibliographystyle{abbrv}
\bibliography{Bibliography.bib}

\end{document}